\providecommand{\U}[1]{\protect\rule{.1in}{.1in}}
\newtheorem{theorem}{Theorem}[section]
\newtheorem{corollary}[theorem]{Corollary}
\newtheorem{definition}[theorem]{Definition}
\newtheorem{lemma}[theorem]{Lemma}
\newtheorem{proposition}[theorem]{Proposition}
\newtheorem{remark}[theorem]{Remark}
\newtheorem{construction}[theorem]{Construction}
\numberwithin{equation}{section}
\title{On the (Local) Lifting Property}
\author{Dominic Enders, Tatiana Shulman}
\begin{document}

\maketitle

\begin{abstract}  The (Local) Lifting Property  ((L)LP) is introduced by Kirchberg and deals with lifting completely positive maps. We give a characterization of the (L)LP in terms of lifting $\ast$-homomorphisms. We use it to  prove that if $A$ and $B$ have the LP and $F$ is their finite-dimensional C*-subalgebra, then $A\ast_F B$ has the LP. This answers a question of Ozawa \cite{Ozawa}.

We prove that Exel's soft tori have the LP. As a consequence we obtain that $C^*(F_n\times F_n)$ is inductive limit of RFD C*-algebras with the LP.

We prove that for a class of C*-algebras including  $C^*(F_n\times F_n)$, all contractible C*-algebras and all suspensions, the LLP is equivalent to Ext being a group.

As byproduct of methods developed in the paper we generalize Kirchberg's theorem about extensions with the WEP,  give short proofs of several, old and new,  facts about soft tori, new unified proofs of Li and Shen's characterization of RFD property of free products amalgamated over a finite-dimensional subalgebra and Blackadar's characterization of semiprojectivity of them.

\end{abstract}

\section{Introduction}

The celebrated Choi-Effros Theorem  \cite{ChoiEffros} states that if $A$ is a nuclear C*-algebra, then  any contractive completely positive map from $A$ to a quotient C*-algebra admits a contractive completely positive lifting:
$$ \begin{tikzcd}  & B \arrow[d]  \\ A   \arrow[r] \arrow[ru, dashed] & B/I  \end{tikzcd}$$

In his outstanding paper \cite{Kirchberg}  Kirchberg introduced the (Local) Lifting Property ((L)LP) that requires, for a C*-algebra $A$,  any c.c.p. map  from $A$ to a quotient C*-algebra to  lift (locally) to a c.c.p. map
(see the section Preliminaries for precise definitions). The (L)LP has deep connections with tensor products and the weak expectation property of C*-algebras (\cite{Kirchberg}, \cite{PisierTensor})  and is one of key ingredients in the proving the equivalence of several fundamental conjectures, including Connes Embedding Problem and Tsirelson's problem (\cite{Kirchberg}, \cite{PisierBook} and references therein).

Despite of the fact that the properties LP and LLP are of central importance, there are not many examples of C*-algebras with the (L)LP  outside the class of nuclear C*-algebras.
In \cite{Kirchberg} Kirchberg proved that the (L)LP is preserved under certain operations, e.g.  both  LP and LLP  are closed under tensoring with nuclear C*-algebras, and the LLP is closed under extensions. Boca proved that the LP is closed under free products \cite{Boca} and Pisier proved the same for the LLP \cite{PisierFreeProduct}. The latter result was generalized by Ozawa who showed that the LLP is closed   under an
amalgamated free product over a finite dimensional C*-subalgebra \cite{Ozawa}.  Ozawa states that it is not known whether the LP is preserved under an amalgamated free product over a finite dimensional C*-subalgebra (\cite[p. 15]{Ozawa}).

\medskip

In this paper we further study the (L)LP.
In section 3 we give a characterization of the (L)LP in terms of lifting $\ast$-homomorphisms  (here by lifting of a $\ast$-homomorphism we mean lifting it to a $\ast$-homomorphism). In general  $\ast$-homomor-phisms lift more rare than ccp maps. A C*-algebra $A$ such that any $\ast$-homomorphism from $A$ to a quotient  C*-algebra lifts is called {\it projective}, and  projectivity is a very strong and therefore rare property \cite{LoringBook}. That for liftability of ccp maps it is sufficient to lift $\ast$-homomorphisms was observed already in the proof that $C^*(F_n)$ has the LP (\cite[Th. 13.1.3]{BO}). This observation was stated explicitly in \cite[Prop. 6.6]{CourtneySherman}  and \cite[Cor. 2.9]{Courtney}.
Here we prove that to be able to lift ccp maps,  a certain lifting condition for $\ast$-homomorphisms is  not only sufficient but also necessary.

\medskip

\noindent {\bf  Theorem} {\it  Let $A$ be separable. The following are equivalent:

\medskip
(i) $A$ has the LP;
\medskip

(ii) for any $\sigma$-unital C*-algebra $B$, its ideal $I$ and a $\ast$-homomorphism $f: A \to M(B/I\otimes K)$ there is a $\ast$-homomorphism $g: A \to M(B/I \otimes K)$ such that $f\oplus g$ lifts to a $\ast$-homomorphism  $A\to M(B\otimes K)$;

  \medskip

(iii) for any $\sigma$-unital C*-algebra $B$, its ideal $I$ and a $\ast$-homomorphism $f: A \to M(B/I\otimes K)$ there is a $\ast$-homomorphism $g: A \to M(B/I \otimes K)$ such that $f\oplus g$ lifts to a ccp map  $A\to M(B\otimes K)$.
}

\medskip

 We also give a reformulation of the LLP in terms of lifting $\ast$-homomorphisms.
 
 \medskip

\noindent {\bf  Theorem} {\it Let $A$ be separable. The following are equivalent:

\medskip
(i) $A$ has the LLP;
\medskip

(ii) for any separable C*-subalgebra $D$ of $Q(H)$ and any $\ast$-homomorphism $f: A \to M(D\otimes K)$ there is a $\ast$-homomorphism $g: A \to M(D \otimes K)$ such that $f\oplus g$ lifts to a $\ast$-homomorphism  $A\to M(\pi^{-1}(D)\otimes K)$;

  \medskip

(iii) for any separable C*-subalgebra $D$ of $Q(H)$ and any $\ast$-homomorphism $f: A \to M(D\otimes K)$ there is a $\ast$-homomorphism $g: A \to M(D \otimes K)$ such that $f\oplus g$ lifts to a ccp map  $A\to M(\pi^{-1}(D)\otimes K)$.}

\medskip

A very useful result on c.p. liftings is Arveson's theorem that says that the set of liftable ccp maps is closed in the pointwise operator norm topology \cite{Arveson}. As an auxiliary result needed to prove the characterization of the LLP above  we give  a generalization, to the strict topology,  of Arveson's theorem which might be of independent interest ({\bf Theorem \ref{strict} }).   

\medskip

 The result of Boca and in separable case  also the result of Pisier on free products follow from the above characterizations of the LP and LLP immediately. Our characterization of the LP is also the main tool that allows us to settle the aforementioned question about free products amalgamated
over a finite-dimensional C*-subalgebra.

\medskip

\noindent {\bf Theorem} {\it Let $F$ be a finite-dimensional C*-subalgebra of $A$ and $B$. If $A$ and $B$ have the LP, then  $A\ast_F B$ has the LP.}

\medskip

This theorem is proved in section 4.  As a consequence we obtain that finite tree products with finite edge groups and in particular finitely generated virtually free groups have full group C*-algebras with the LP ({\bf Corollary \ref{VirtuallyFree}}).  Our technique for amalgamated free products applies also to some other lifting properties. We give a new proof of Blackadar's result stating that semiprojectivity passes to  free products amalgamated over a finite-dimensional subalgebra ({\bf Corollary \ref{SP}}). We give a new proof of Li and Shen's characterization of when unital free products amalgamated over a finite-dimensional subalgebra are RFD ({\bf Theorem \ref{RFDunital}}). We also obtain a characterization in the non-unital case ({\bf Theorem \ref{RFDnonunital}}).

\medskip

In section 5 we prove that Exel's soft tori have the LP ({\bf Theorem \ref{SoftTorusthe LP}}). The soft torus $C(\mathbb T^2)_{\epsilon}$ is the universal C*-algebra generated by two unitaries commmuting up to $\epsilon$:
$$C(\mathbb T^2)_{\epsilon} = C^*\langle u, v\;|\; u \; \text{and} \; v \; \text{are unitaries and} \; \|[u, v]\| \le \epsilon\rangle.$$
Soft tori play an important role in the study of almost commuting matrices and operators,  their properties were extensively studied.  Here we develop a simple method which allows first of all to prove the LP and at the same time to give a unified proof of some of the known results, and obtain some new properties. Namely in section 6 we give a short proof of Exel's result \cite{Exel} that the soft tori form a continuous field   ({\bf Theorem \ref{ContinuousField}}),  Eilers and Exel' result  \cite{EilersExel} that soft tori are RFD   ({\bf Theorem \ref{SoftTorusRFD}}), we show that soft tori are very flexibly stable in the sense of Becker and Lyubotzky  ( {\bf Proposition \ref{SoftTorusVeryFlexiblyStable}}) and we strengthten  Exel's result \cite{Exel} on *-strong dilated perturbations of unitary pairs ({\bf Theorem \ref{dilations}}).

The full group C*-algebra  $C^*(F_2\times F_2)$  plays an important role in the C*-algebra theory. The Connes Embeding Problem is equivalent to the question of whether or not $C^*(F_2\times F_2)$ is RFD. Whether  $C^*(F_2\times F_2)$ has the (L)LP is an outstandng open question. Using the result of section 5 we show that $C^*(F_2\times F_2)$ is inductive limit of RFD C*-algebras with the LP ({\bf Proposition \ref{OtherPropertiesOfFnxFn}}).

\medskip

Section 7  is devoted to the semigroup $Ext$ of extensions by compact operators. The semigroup $Ext$ was introduced in the celebrated work of Brown-Douglas-Fillmore \cite{BDF}. The first example of a C*-algebra $A$ such that $Ext(A)$ is not a group was constructed  by  J. Anderson \cite{Anderson}. Since then more examples were found but still  there is no clear understanding of when $Ext$ is a  group.  It is known that the LLP implies that $Ext$ is a group and it is not known whether these properties are equivalent (\cite[p.12]{Ozawa}). In this section we prove that for an interesting class of C*-algebras, including $C^*(F_n\times F_n)$,  all contractible C*-algebras and all suspensions, they are equivalent.

\medskip

\noindent {\bf Theorem} {\it Suppose $A$ is an inductive limit,  with surjective connecting maps,  of separable C*-algebras that are RFD and have the LP (e.g. $A = C^*(F_n\times F_n)$ or $A$ is any contractible C*-algebra or $A$ is any suspension). Then $A$ has the LLP if and only if $Ext(A)$ is a group.}

\medskip

Kirchberg  proved that a unital separable C*-algebra $A$ has the LLP if
and only if $Ext(SA)$ is a group if and only if $Ext(cone(A))$ is a group \cite{Kirchberg}. Since   $cone(A)$ (and $SA$ ) has the LLP if and only if $A$ has,  Kirchberg's theorem follows immediately from the  theorem above. Moreover  it implies that in Kirchberg's theorem the unitality assumption is not needed.

As mentioned above it is not known whether $C^*(F_2\times F_2)$ has the LLP, but now we can say that the problem is equivalent to the question of whether $Ext\left( C^*(F_2\times F_2)\right)$ is a group.

\medskip

The last section of the paper contains miscellaneous results:

\medskip

1)  First, we give a generalization of Kirchberg's theorem on cones \cite[Th. 13.4.1]{BO} that says that  if $A$ is a separable QWEP C*-algebra, then  $cone(A)$ has a quasidiagonal extension with the WEP. Recall that a C*-algebra is said to have the {\it weak expectation property (WEP)} if there exists a u.c.p. map $\Phi: B(H_u) \to A^{**}$ such that $\Phi(a) = a$ for all $a\in A$, where $A\subset A^{**} \subset B(H_u)$ is the universal representation of $A$. A C*-algebra is called {\it QWEP} if it is a quotient of a C*-algebra with the WEP.  The Connes Embdedding Problem is equivalent to the statement that each separable C*-algebra is QWEP (\cite{Kirchberg}).

\medskip

\noindent {\bf Theorem}   {\it  Let $A$ be  separable contractible (or, more generally, an inductive limit,  with surjective connecting maps,  of RFD  C*-algebras that have  the LP) with the QWEP. Then there exists a quasidiagonal
    extension
    $$0\to K \to B \to A \to 0$$  such that $B$ has the WEP.}

\medskip

\noindent Since $A$ is QWEP if and only if $cone(A)$ is QWEP,  Kirchberg's theorem on cones is a particular case of the theorem above.

\medskip

2) We give a new and short  proof to the fact that the class of C*-algebras with the LP is closed under crossed products with amenable groups which was proved by Buss, Echterhoff and Willett \cite[Th. 7.4]{BEW}.




\medskip

\noindent {\bf Acknowledgments. }
 We thank Alcides Buss for informing us about \cite[Th. 7.4]{BEW} and for comments on the first version of this work. T.S. was partially supported by a grant from the Swedish Research Council.

\section{Preliminaries}

\begin{definition} Let $A$ be a C*-algebra, $I$ be an ideal\footnote{By an ideal we always will mean a closed two-sided ideal.} in a C*-algebra $B$, $q: B \to B/I$ be the quotient map.  We say that  ccp map $\phi: A\to B/I$ is liftable  if there exists a ccp map $\psi: A\to B$ such that $\pi\circ \psi = \phi$.  We say that $\phi$ is locally liftable if for any finite-dimensional operator system $E\subset A$, there exists a ccp map $\psi: E \to B$ such that
$\pi\circ \psi = \phi|_E$.
\end{definition}

\begin{definition} A unital C*-algebra $A$ has the lifting property (the LP)  (respectively the local lifting property (the LLP)) if any ccp map from $A$ into a quotient C*-algebra $B/I$ is liftable (respectively locally liftable). A non-unital C*-algebra has the LP (the LLP, respectively)  if its unitization has that property.
\end{definition}

By \cite[Lemma 13.1.2]{BO} to prove that a unital C*-algebra  has the LP (respectively the LLP) it is sufficient to lift only ucp maps (as opposed to ccp).

\medskip

We will often use the Noncommutative Tietze Extension Theorem (\cite{Pedersen}) which says that  any surjective $\ast$-homomorphism $\pi: B\to A$ extends to a $\ast$-homomorphism $\tilde\pi: M(B)\to M(A)$ which is continuous with respect to the strict topology on $M(B)$ and $M(A)$, and if $B$ is $\sigma$-unital, then $\tilde \pi$ is surjective.

Let $K$ denote the C*-algebra of all compact operators on a separable Hilbert space (sometimes we also will denote it by $K(H)$). Given a surjection $B\to B/I$ with separable $B$ we will  apply the Noncommutative Tietze Extension Theorem to extend the surjection $B\otimes K \to B/I \otimes K$  to a  surjection $M(B\otimes K)  \to M(B/I \otimes K)$.


The Calkin algebra $B(H)/K(H)$ will be denoted by $Q(H)$.

\section{A characterization of the (L)LP in terms of lifting homomorphisms}

\subsection{A characterization of the LP}

\begin{theorem} \label{CharacterizationLP} Let $A$ be separable. The following are equivalent:

\medskip
(i) $A$ has the LP;
\medskip

(ii) for any $\sigma$-unital C*-algebra $B$, its ideal $I$ and a $\ast$-homomorphism $f: A \to M(B/I\otimes K)$ there is a $\ast$-homomorphism $g: A \to M(B/I \otimes K)$ such that $f\oplus g$ lifts to a $\ast$-homomorphism  $A\to M(B\otimes K)$;

  \medskip

(iii) for any $\sigma$-unital C*-algebra $B$, its ideal $I$ and a $\ast$-homomorphism $f: A \to M(B/I\otimes K)$ there is a $\ast$-homomorphism $g: A \to M(B/I \otimes K)$ such that $f\oplus g$ lifts to a ccp map  $A\to M(B\otimes K)$.

\end{theorem}

\begin{proof}
 (i) $\Rightarrow$ (ii): let $f: A \to M(B/I\otimes K)$ be a $\ast$-homomorphism. Since $A$ has the LP, $f$ lifts  to a ccp map $\Phi: A\to M(B\otimes K)$. By Kasparov-Stinespring Dilation Theorem \cite{KasparovDilation}, $\Phi$ dilates to a $\ast$-homomorphism $F: A \to M(B\otimes K)$.
  In other words, $F$ is of the form
$$F = \left(\begin{array}{cc} \Phi & \Phi_2\\ \Phi_3 & \Phi_4\end{array}\right).$$ Then
$$\left(\begin{array}{cc} \Phi(ab) & \Phi_2(ab)\\ \Phi_3(ab) & \Phi_4(ab)\end{array}\right) = F(ab) = F(a)F(b) = \left(\begin{array}{cc} \Phi(a) & \Phi_2(a)\\ \Phi_3(a) & \Phi_4(a)\end{array}\right)\left(\begin{array}{cc} \Phi(b) & \Phi_2(b)\\ \Phi_3(b) & \Phi_4(b)\end{array}\right).$$
This in particular implies that
\begin{equation}\label{1} \Phi(ab) = \Phi(a)\Phi(b) + \Phi_2(a)\Phi_3(b) \end{equation} and
\begin{equation}\label{2} \Phi_4(ab) = \Phi_3(a)\Phi_2(b) + \Phi_4(a)\Phi_4(b),\end{equation} for any $a, b\in A$.
Since $f = \pi(\Phi)$ is a $\ast$-homomorphism, (\ref{1}) implies that
\begin{equation}\label{4} \pi(\Phi_2(a))\pi(\Phi_3(b)) = 0, \end{equation} for any $a, b\in A$. Since $F(a) = F(a^*)^*$,  we have
\begin{equation}\label{3} \Phi_2(a) = (\Phi_3(a^*))^*.\end{equation} By taking $b = a^*$ and substituting (\ref{3}) into (\ref{4}), we obtain that $\pi\circ\Phi_2=\pi\circ\Phi_3= 0$. Together with (\ref{2}) this implies that $g:= \pi\circ \Phi_4$ is a $\ast$-homomorphism, and $f\oplus g$ lifts to the $\ast$-homomorphism $F$.

\medskip

(ii) $\Rightarrow$ (iii): obvious.

\medskip

(iii) $\Rightarrow$ (i): let $\phi: A \to B/I$ be a ccp map. We can assume that $B$ is separable. Indeed, let $a_1, a_2, \ldots$ be a dense subset of $A$ and  let $b_i$ be a preimage of $\phi(a_i)$, $i\in \mathbb N$. Let $B_0$ be the C*-subalgebra of $B$  generated by $b_1, b_2, \ldots$, and $I_0 = I\cap B_0$. Then $\phi$ can be considered as a map to $B_0/I_0$.

By Kasparov-Stinespring Dilation Theorem, $\phi$ dilates to a $\ast$-homomorphism $f: A \to M(B/I \otimes K)$. There is a $\ast$-homomorphism $g: A \to M(B/I \otimes K)$ such that $f\oplus g$ lifts to a ccp map $F: A \to M(B\otimes K)$. Then the corner $(F_{11})_{11}: A \to B$ is a ccp lift of $\phi$.
\end{proof}

\medskip

\subsection{A characterization of the LLP}

Let $\pi: B(H) \to Q(H)$ be the canonical surjection. In what follows we will often use the fact that a separable C*-algebra $A$ has the LLP if and only if every ccp map $A\to Q(H)$ lifts to a ccp map $A\to B(H)$ (\cite[Prop. 3.13]{Ozawa}).

\begin{lemma}\label{DtensorK} Let $A$ be  a separable C*-algebra with the LLP. Then for any separable C*-subalgebra $D$ of $Q(H)$, any ccp map $\phi: A \to D\otimes K$ lifts to a ccp map from $A$ to $\pi^{-1}(D)\otimes K$.
\end{lemma}
\begin{proof} An isomorphism of $H$ and $H^{(n)}$ induces  isomorphisms $\theta_n: B(H)\to B(H)\otimes M_n$ and $\tilde \theta_n: Q(H)\to Q(H)\otimes M_n$ such that
\begin{equation}\label{tensorMatricesAgree}(\pi\otimes id_{M_n})\circ \theta_n = \tilde\theta_n\circ\pi.\end{equation}
Let $D$ be a  separable C*-subalgebra of $Q(H)$  and $\phi: A \to D\otimes K$ a ccp map. Let $P_n\in K$, $n\in \mathbb N$, be an increasing sequence of projections that strongly converges to $\mathbb 1$. We define $\phi_n: A \to D\otimes M_n\subset D\otimes K$ by
$$\phi_n(a) = (1_{Q(H)}\otimes P_n) \phi(a) (1_{Q(H)}\otimes P_n),$$
$a\in A$. Since $A$ has the LLP, $\tilde\theta_n^{-1}\circ \phi_n: A \to Q(H)$ lifts to a ccp map $\psi_n: A \to B(H)$. Then by (\ref{tensorMatricesAgree}), $\theta_n\circ\psi_n: A \to B(H)\otimes M_n \subset B(H) \otimes K$ is a ccp lift of $\phi_n$.

Since for any $T\in K$, $P_nTP_n\to T$,
$$\phi(a)= \lim_{n\to \infty} \phi_n(a), $$ for each $a\in A$. Then by Arveson's theorem, $\phi$ lifts to a some ccp map $\bar \phi: A\to B(H)\otimes K$. Since $(\pi\otimes id_K)^{-1}(D\otimes K) = \pi^{-1}(D)\otimes K,$ $\bar\phi$ is  a ccp map to $\pi^{-1}(D)\otimes K$.
\end{proof}

\medskip


The following  theorem is a generalization, to the strict topology,  of Arveson's result that the set of liftable cp maps is closed in the pointwise operator norm topology. The arguments are similar to Arveson's.

\begin{theorem}\label{strict} Let $A$ and $B$ be separable C*-algebras and $I$ an ideal in $B$. Suppose $\phi_n: A \to M(B/I)$, $n\in \mathbb N$, and  $\phi: A \to M(B/I)$are cp maps and $\phi_n(a)\to \phi(a)$ in the strict topology, for any $a\in A$. Suppose each $\phi_n$ lifts to a cp map. Then $\phi$ lifts to a cp map.
\end{theorem}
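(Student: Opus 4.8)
The plan is to follow Arveson's classical argument for the point--norm case, systematically replacing norm estimates by strict--seminorm estimates. Recall that the noncommutative Tietze theorem provides a strictly continuous surjection $\tilde\pi\colon M(B)\to M(B/I)$ extending the quotient map (here $B$ is separable, hence $\sigma$-unital), and by hypothesis each $\phi_n$ has a cp lift $\Psi_n\colon A\to M(B)$ with $\tilde\pi\circ\Psi_n=\phi_n$. Since $A$ and $B$ are separable, I would fix dense sequences $(a_j)$ in the unit ball of $A$ and $(b_m)$ in $B$; strict convergence in $M(B/I)$ is then governed by the countable family of seminorms $x\mapsto\|x\pi(b_m)\|+\|\pi(b_m)x\|$, and in $M(B)$ by $y\mapsto\|yb_m\|+\|b_my\|$. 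After passing to a subsequence I may assume the fast--convergence estimate
\[
\|(\phi_n(a_j)-\phi(a_j))\pi(b_m)\|+\|\pi(b_m)(\phi_n(a_j)-\phi(a_j))\|<2^{-n}
\]
for all $j,m\le n$. The goal is to produce a single cp lift of $\phi$ as a point--strict limit of exact cp lifts of the $\phi_n$.

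I would build inductively cp maps $\Theta_n\colon A\to M(B)$ with $\tilde\pi\circ\Theta_n=\phi_n$ and with $\Theta_{n+1}$ strictly close to $\Theta_n$ on $a_1,\dots,a_n$, starting from $\Theta_1=\Psi_1$. Given $\Theta_n$, I interpolate between it and $\Psi_{n+1}$ by means of an approximate unit $(u_\lambda)$ of $I$ (viewed inside $B\subseteq M(B)$), setting
\[
\Theta_{n+1}(a)=(1-u_\lambda)^{1/2}\Psi_{n+1}(a)(1-u_\lambda)^{1/2}+u_\lambda^{1/2}\Theta_n(a)u_\lambda^{1/2}.
\]
This is manifestly cp, and since $u_\lambda\in I\subseteq\ker\tilde\pi$ one has $\tilde\pi\circ\Theta_{n+1}=\phi_{n+1}$ for every $\lambda$, so each $\Theta_n$ stays an exact lift. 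Modulo commutators of $u_\lambda^{1/2}$ with the values of $\Psi_{n+1}$ and $\Theta_n$ (which quasicentrality will make negligible in the seminorms), one has
\[
\Theta_{n+1}(a)-\Theta_n(a)\approx(1-u_\lambda)\bigl(\Psi_{n+1}(a)-\Theta_n(a)\bigr).
\]
Now $\tilde\pi\bigl(\Psi_{n+1}(a_j)-\Theta_n(a_j)\bigr)=\phi_{n+1}(a_j)-\phi_n(a_j)$ is strictly small by the fast estimate, so the element $\Psi_{n+1}(a_j)-\Theta_n(a_j)$ is, in the relevant seminorms, close to $\ker\tilde\pi$. The key algebraic fact is that $z\in\ker\tilde\pi$ and $b\in B$ force $zb,bz\in I$, so multiplying by $1-u_\lambda$ with $\lambda$ large annihilates this error strictly. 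Choosing $(u_\lambda)$ quasicentral relative to the countable set of $B$--valued products $\{Xb_m,\,b_mX\}$, where $X$ ranges over the lift values $\Psi_n(a_j),\Theta_n(a_j)$, I can also absorb the commutator terms and arrange $\|(\Theta_{n+1}(a_j)-\Theta_n(a_j))b_m\|+\|b_m(\Theta_{n+1}(a_j)-\Theta_n(a_j))\|<C\,2^{-n}$ for $j,m\le n$.

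To conclude, I observe that $M(B)$ is complete in the strict topology on bounded sets and the increments above are summable in each seminorm, so $\Theta(a_j):=\lim_n\Theta_n(a_j)$ exists strictly; uniform boundedness of the $\Theta_n$ (on positive $a$ one has $\Theta_{n+1}(a)\le\|a\|\cdot 1$ by the convex--combination structure) together with density of $(a_j)$ extends this to a cp map $\Theta\colon A\to M(B)$, complete positivity passing to the strict limit. Strict continuity of $\tilde\pi$ then gives $\tilde\pi\circ\Theta=\lim_n\phi_n=\phi$, so $\Theta$ is the desired lift. The hard part will be precisely the interaction between quasicentrality and the multiplier/strict setting: an approximate unit of $I\subset B$ is \emph{not} norm--quasicentral for general elements of $M(B)$, so every estimate must be read through the strict seminorms, and the approximate unit must be chosen quasicentral against the growing (but a priori separable) family of products $Xb_m,b_mX$. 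Managing this bookkeeping, and verifying that $1-u_\lambda$ strictly kills the errors via $\ker\tilde\pi\cdot B\subseteq I\supseteq B\cdot\ker\tilde\pi$, is the technical heart of the argument.
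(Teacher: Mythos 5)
Your proposal is correct and follows essentially the same route as the paper's proof: the same Arveson-style inductive interpolation $(1-u_\lambda)^{1/2}\Psi_{n+1}(\cdot)(1-u_\lambda)^{1/2}+u_\lambda^{1/2}\Theta_n(\cdot)u_\lambda^{1/2}$ with a quasicentral approximate unit of $I$ in $B$, the same mechanism whereby $(1-u_\lambda)$ kills the error because $(\Psi_{n+1}(a_j)-\Theta_n(a_j))b_m$ lies in $B$ and is norm-close to $I$, and the same conclusion via strict completeness on bounded sets and strict continuity of $\tilde\pi$. The only cosmetic difference is that the paper tracks one-sided seminorms $\|\tilde\psi_n(a_i)x_k\|$ and invokes self-adjointness to upgrade to strict convergence, while you carry two-sided seminorms throughout.
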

\begin{proof} Let $\psi_n$ be a cp lift of $\phi_n$, $n\in \mathbb N$. Let $a_1, a_2, \ldots$ be a dense subset of the unit ball of $A$ and $x_1, x_2, \ldots$ be a dense subset of the unit ball of $B$. We choose  a subsequence of $\{\phi_n\}$, which we also denote by $\{\phi_n\}$, such that
$$ \|\phi_{n+1}(a_i)\pi(x_k) - \phi_n(a_i)\pi(x_k)\| < \frac{1}{2^n},$$ when $i\le n, k\le n$.  Let $\{i_{\lambda}\}$ be a quasicientral approximate unit for $I \triangleleft B$. We will construct a cp lift $\tilde \psi_n$ of $\phi_n$, $n\in \mathbb N$, such that
$$ \|\tilde\psi_{n+1}(a_i)x_k - \tilde\psi_n(a_i)x_k\| < \frac{1}{2^{n-2}},$$ when $i\le n, k\le n$.

Let $\tilde \psi_1 = \psi_1$. Assume $\tilde \psi_2, \ldots, \tilde \psi_n$ are already constructed. Let
$$\tilde\psi_{n+1} = (1-i_{\lambda})^{1/2} \psi_{n+1} (1-i_{\lambda})^{1/2} +  i_{\lambda}^{1/2}\tilde \psi_n  i_{\lambda}^{1/2}, $$ where $\lambda$ is sufficiently large so that
\begin{multline*} \|[ (1-i_{\lambda})^{1/2}, x_k]\|  \le \frac{1}{2^{n+1}},\; \|[ (1-i_{\lambda})^{1/2}, \psi_{n+1}(a_i)x_k]\|\le \frac{1}{2^{n+1}},\\ \| [i_{\lambda}^{1/2}, x_k] \| \le \frac{1}{2^{n+1}}, \;
\|[i_{\lambda}^{1/2}, \tilde\psi_n(a_i)x_k]\| \le \frac{1}{2^{n+1}}, \\ \|(1-i_{\lambda})\left(\psi_{n+1}(a_i)x_k - \tilde \psi_n(a_i)x_k\right)\| \le \frac{1}{2^{n-1}},\end{multline*}
when $i\le n, k\le n$. It is straightforward to check that $$ \|\tilde\psi_{n+1}(a_i)x_k - \tilde\psi_n(a_i)x_k\| < \frac{1}{2^{n-2}},$$ when $i\le n, k\le n$. Therefore for any $i, k$ the sequence $\tilde \psi_n(a_i)x_k$ converges. It implies that for any $a\in A$, $x\in B$ the sequence $\tilde\psi_n(a)x$ converges. By \cite[p.39]{WeggeOlsen} this implies that for each $a\in A$ the sequence $\tilde \psi_n(a)$ converges in the strict topology.  Therefore $\tilde\psi_n$ converge pointwisely in the strict topology to some map $\psi$. This $\psi$ must be cp. Indeed if we embed $B$ non-degeneratly into $B(H)$, then the strict topology on $M(B)\subset B(H)$  is finer than the strong operator topology (\cite[Prop.2.3.4]{WeggeOlsen}) and therefore for any $(a_{ij})\in M_N(A)$ one has $\tilde\psi_n^{(N)}(a_{ij}) \to \psi^{(N)}(a_{ij})$ in the strong operator topology. Since the strong limit of positive operators is positive, we conclude that $\psi$ is cp.

Since $\tilde \pi$ is strictly continuous, $\psi$ is a lift of $\phi$.
\end{proof}

\medskip

Now we obtain a characterization of the LLP in terms of lifting $\ast$-homomorphisms.

\begin{theorem} \label{CharacterizationLLP} Let $A$ be separable. The following are equivalent:

\medskip
(i) $A$ has the LLP;
\medskip

(ii) for any separable C*-subalgebra $D$ of $Q(H)$ and any $\ast$-homomorphism $f: A \to M(D\otimes K)$ there is a $\ast$-homomorphism $g: A \to M(D \otimes K)$ such that $f\oplus g$ lifts to a $\ast$-homomorphism  $A\to M(\pi^{-1}(D)\otimes K)$;

  \medskip

(iii) for any separable C*-subalgebra $D$ of $Q(H)$ and any $\ast$-homomorphism $f: A \to M(D\otimes K)$ there is a $\ast$-homomorphism $g: A \to M(D \otimes K)$ such that $f\oplus g$ lifts to a ccp map  $A\to M(\pi^{-1}(D)\otimes K)$.
\end{theorem}
\begin{proof} (i) $\Rightarrow$ (ii): Let $D$ be  a separable C*-subalgebra of $Q(H)$ and   $f: A \to M(D\otimes K)$ a  $\ast$-homomorphism. Let $\{i_{\lambda}\}$ be  a quasicentral approximate unit for $D\otimes K\lhd M(D\otimes K)$. Then for any $x\in D\otimes K$ and $a\in A$
$$i_{\lambda}^{1/2}f(a)i_{\lambda}^{1/2}x \to f(a)x.$$ This means that the ccp maps $i_{\lambda}^{1/2}fi_{\lambda}^{1/2}: A \to D\otimes K$ pointwisely converge  in the strict topology to $f$. By Lemma \ref{DtensorK} each $i_{\lambda}^{1/2}fi_{\lambda}^{1/2}$ lifts.  By Theorem \ref{strict} $f$ lifts to a ccp map $\Phi: A\to M(\pi^{-1}(D)\otimes K)$. The proof proceeds now exactly as the proof of (i) $\Rightarrow$ (ii) in Theorem \ref{CharacterizationLP}.

\medskip

(ii) $\Rightarrow$ (iii) is obvious.

\medskip

(iii) $\Rightarrow$ (i): Let $\phi: A \to Q(H)$ be a ccp map. Let $D$ be the separable C*-subalgebra generated by its image. By Kasparov-Steinspring Dilation Theorem $\phi$ dilates to a $\ast$-homomorphism $f: A \to M(D\otimes K)$. There is  a $\ast$-homomorphism $g: A \to M(D \otimes K)$ such that $f\oplus g$ lifts to a ccp map  $\Psi: A\to M(\pi^{-1}(D)\otimes K)$. Then $\left(\Psi_{11}\right)_{11}: A \to \pi^{-1}(D) \subset B(H)$ is a ccp lift of $\phi$.
\end{proof}

\medskip

\subsection{Free products}

\begin{corollary}\label{Boca} (Boca \cite{Boca}) The LP passes to countable free products of separable C*-algebras.
\end{corollary}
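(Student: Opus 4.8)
The plan is to deduce this directly from the characterization in Corollary \ref{CharacterizationLP}. Write $A = \ast_n A_n$ for the (universal) free product, where each $A_n$ is separable with the LP. By Corollary \ref{CharacterizationLP} it suffices to verify condition (ii) for $A$. So I fix a $\sigma$-unital C*-algebra $B$, an ideal $I \subseteq B$ with induced quotient map $\pi: M(B \otimes K) \to M(B/I \otimes K)$, and a $\ast$-homomorphism $f: A \to B/I$. Throughout I regard $B/I$ as sitting in the corner $(1 \otimes e_{11}) M(B/I \otimes K)(1 \otimes e_{11})$ via $b \mapsto b \otimes e_{11}$, so that a direct sum $f \oplus g$ is literally an element of $M_2\big(M(B/I \otimes K)\big) \cong M(B/I \otimes K)$ with $f$ in the $(1,1)$-corner.

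First I would restrict: set $f_n := f|_{A_n}: A_n \to B/I$. Since each $A_n$ is separable and has the LP, Corollary \ref{CharacterizationLP}(ii), applied with the same $B$ and $I$, furnishes a $\ast$-homomorphism $g_n: A_n \to M(B/I \otimes K)$ together with a $\ast$-homomorphism $F_n: A_n \to M(B \otimes K)$ lifting $f_n \oplus g_n$, i.e. $\pi \circ F_n = f_n \oplus g_n$. Next, the universal property of the free product assembles the family $(F_n)$ into a single $\ast$-homomorphism $F: A \to M(B \otimes K)$ with $F|_{A_n} = F_n$; it is exactly here that one uses that $A$ is the universal free product, for which a family of $\ast$-homomorphisms into a common target extends to the whole free product.

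The heart of the argument is to show that $\pi \circ F$ already has the required form $f \oplus g$. Put $e = 1 \otimes e_{11}$. Each $\pi \circ F_n = f_n \oplus g_n$ is block-diagonal with respect to $e$, so $e$ commutes with $\pi(F(A_n))$ for every $n$. Because the subalgebras $F(A_n)$ generate $F(A)$, the projection $e$ commutes with all of $\pi(F(A))$, and hence the compression $a \mapsto e\,\pi(F(a))\,e$ is a $\ast$-homomorphism on $A$. Since it restricts on each $A_n$ to $f_n$, the uniqueness clause of the universal property identifies this $(1,1)$-corner with $f$ itself, while the complementary corner $a \mapsto (1-e)\,\pi(F(a))\,(1-e)$ is a $\ast$-homomorphism $g: A \to M(B/I \otimes K)$ restricting to each $g_n$. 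Therefore $\pi \circ F = f \oplus g$, which is precisely condition (ii) of Corollary \ref{CharacterizationLP}, and the LP for $A$ follows.

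I expect the only delicate point to be the bookkeeping around the stabilization, namely ensuring that the corner embeddings used to form the various $f_n \oplus g_n$ are one and the same fixed embedding, so that all the $f_n$ land in a single common corner and all the $g_n$ in the orthogonal one. This is what makes every $\pi \circ F_n$ block-diagonal with respect to the same $e$, which in turn is the mechanism forcing $\pi \circ F$ to be block-diagonal on all of $A$; once the block structure is secured, the remainder of the argument is formal.
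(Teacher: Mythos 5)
Your proposal is correct and takes essentially the same route as the paper's proof: restrict $f$ to the free factors, apply Corollary \ref{CharacterizationLP}(ii) to each $A_n$ to obtain $g_n$ and lifts $F_n$, assemble the $F_n$ into a single $\ast$-homomorphism $F$ via the universal property of the free product, and conclude that $\pi\circ F$ has the form $f\oplus g$. The paper states this last step without comment, whereas you spell out why $\pi\circ F$ is block-diagonal with respect to the fixed corner projection; that is a useful clarification but not a different argument.
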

\begin{proof} Assume $A_{\alpha}$, $\alpha \in \mathbb N$,  have the LP and let $f: \ast A_{\alpha} \to D/I$ be a $\ast$-homomorphism. Let $f_{A_{\alpha}}$ be its restriction onto $A_{\alpha}$, $\alpha \in \mathbb N$. By Theorem \ref{CharacterizationLP}  there are $g_{A_{\alpha}}$, $\alpha \in \mathbb N$,  such that $f_{A_{\alpha}}\oplus g_{A_{\alpha}}$ lifts to a $\ast$-homomorphism $F_{A_{\alpha}}$. Let $F$ be the corresponding $\ast$-homomorphism on $\ast A_{\alpha}$. Then $f\oplus g = \pi \circ F$. By Theorem \ref{CharacterizationLP} $\ast A_{\alpha}$ has the LP.
\end{proof}

\begin{corollary}(Pisier \cite{PisierBook}) The LLP passes to free products of separable C*-algebras.
\end{corollary}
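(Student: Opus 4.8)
The plan is to run the argument of Corollary~\ref{Boca} essentially verbatim, with the LLP-characterization Corollary~\ref{CharacterizationLLP} in place of the LP-characterization Corollary~\ref{CharacterizationLP}. Since a countable free product of separable C*-algebras is again separable, Corollary~\ref{CharacterizationLLP} is available both for each free factor and for the free product itself, and it reduces the whole problem to lifting $\ast$-homomorphisms into the single quotient $Q(H)$.

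So suppose the separable C*-algebras $A_{\alpha}$, $\alpha\in\mathbb{N}$, have the LLP, and let $f\colon \ast_{\alpha}A_{\alpha}\to Q(H)$ be a $\ast$-homomorphism. First I would restrict $f$ to each free factor to obtain $\ast$-homomorphisms $f_{A_{\alpha}}\colon A_{\alpha}\to Q(H)$. Applying Corollary~\ref{CharacterizationLLP}(ii) to each factor produces $\ast$-homomorphisms $g_{A_{\alpha}}$ such that $f_{A_{\alpha}}\oplus g_{A_{\alpha}}$ lifts to a $\ast$-homomorphism $F_{A_{\alpha}}$. By the universal property of the full free product, the family $\{F_{A_{\alpha}}\}$ assembles into a single $\ast$-homomorphism $F$ on $\ast_{\alpha}A_{\alpha}$ restricting to $F_{A_{\alpha}}$ on each factor.

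It then remains to see that $\pi\circ F$ is again of the form $f\oplus g$. I would fix, once and for all, the corner projection carrying the ``$f$-part'', so that for every $\alpha$ the map $\pi\circ F_{A_{\alpha}}=f_{A_{\alpha}}\oplus g_{A_{\alpha}}$ is block-diagonal with respect to that single projection, with diagonal corners $f_{A_{\alpha}}$ and $g_{A_{\alpha}}$. Since each generator $\pi(F_{A_{\alpha}}(x))$ is then block-diagonal with respect to the same projection and block-diagonal elements form a C*-subalgebra, $\pi\circ F$ is itself block-diagonal; its upper corner restricts to $f_{A_{\alpha}}$ on each $A_{\alpha}$ and so equals $f$, while its lower corner is the free-product homomorphism $g:=\ast_{\alpha}g_{A_{\alpha}}$. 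Thus $f\oplus g=\pi\circ F$, and Corollary~\ref{CharacterizationLLP} yields the LLP for $\ast_{\alpha}A_{\alpha}$.

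The one point requiring care---and the place where a careless gluing could break down---is exactly this last compatibility: the decompositions realizing the various $f_{A_{\alpha}}\oplus g_{A_{\alpha}}$ must all be taken with respect to a common corner projection, since otherwise the assembled $F$ need not descend to a map of the form $f\oplus g$. This is automatic here, as the $f_{A_{\alpha}}$ are restrictions of one $f$ landing in a fixed copy of $Q(H)$. Apart from this, the argument is the routine transcription of Corollary~\ref{Boca}, and I expect no further obstacles.
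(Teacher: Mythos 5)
Your argument for the countable case is exactly the paper's: restrict $f$ to each factor, apply Corollary \ref{CharacterizationLLP} to obtain $g_{A_{\alpha}}$ with $f_{A_{\alpha}}\oplus g_{A_{\alpha}}$ liftable, assemble the lifts $F_{A_{\alpha}}$ into one $\ast$-homomorphism $F$ by the universal property of the free product, and check that $\pi\circ F$ descends to a map of the form $f\oplus g$. Your explicit remark that all the direct sums must be taken with respect to one fixed corner projection is a correct clarification of a point the paper's proof of Corollary \ref{Boca} leaves implicit.

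There is, however, one missing step. Unlike Corollary \ref{Boca}, which is stated for \emph{countable} free products, the present statement concerns arbitrary free products of separable C*-algebras, and your proof treats only a countable index set. This is not cosmetic: an uncountable free product of (nontrivial) separable C*-algebras is non-separable, so Corollary \ref{CharacterizationLLP} cannot be invoked for the free product itself and your scheme does not run as written. The paper closes this gap with a short locality argument: given a ccp map $\phi:\ast A_{\alpha}\to B/I$ and a finite-dimensional operator system $E\subset \ast A_{\alpha}$, there are countably many indices $\alpha_1,\alpha_2,\ldots$ with $E\subset \ast_i A_{\alpha_i}$, and the countable case already established then lifts $\phi|_E$. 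You should append this reduction (or explicitly restrict your claim to countable free products, which is all your argument proves).
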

\begin{proof} In the same way as in the proof of Corollary \ref{Boca}, using Theorem \ref{CharacterizationLLP}  we obtain that the LLP passes to countable free products of separable C*-algebras. Now let $\ast A_{\alpha}$ be an arbitrary free product of separable C*-algebras. Let $\phi: \ast A_{\alpha} \to B/I$ be ccp and let $E\subset  \ast A_{\alpha} $ be a finite-dimensional operator system. Then there are countably many $\alpha_1, \alpha_2, \ldots, $ such that $E \subset \ast_i A_{\alpha_i} $ which implies that $\phi|_E$ is liftable.
\end{proof}

We note that Pisier proved the statement above without the assumption that C*-algebras are separable.

\medskip

\section{Free products amalgamated over finite-dimensional subalgebras}

In this section we will use several times the well known fact that if the unitary group of $D/I$ is connected, then each unitary in $D/I$ lifts to a unitary in $D$.

\subsection{LP and LLP}

\begin{lemma}\label{FinDim} Suppose $F$ is a finite-dimensional C*-algebra, $B$ is a unital C*-algebra and $\alpha, \beta: F \to B$ are unital $\ast$-homomorphisms.

\medskip

(i) If $\alpha (p)\sim \beta(p)$ for all projections $p\in F$,  then $\alpha$ and $\beta$ are unitarily equivalent;

\medskip

(ii)   If  $\alpha (p)\sim 1 \sim \beta(p)$ for all projections $p\in F$, $I$ is an ideal in $B$, $\pi: B\to B/I$ is the canonical surjection, $\pi\alpha = \pi\beta$, and the unitary group of $B/I$ is connected,  then the unitary $U$ implementing (i) can be chosen with $\pi(U) =1$.
\end{lemma}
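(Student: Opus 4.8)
The plan is to reduce everything to the block structure of the finite-dimensional algebra. Write $F=\bigoplus_{k=1}^{n}M_{d_k}$ and let $\{e^{(k)}_{ij}\}$ denote the standard matrix units of the $k$-th block, so that a unital $\ast$-homomorphism out of $F$ is completely determined by the images of these matrix units. For part (i), the only projections I need are the minimal ones $e^{(k)}_{11}$: the hypothesis furnishes, for each $k$, a partial isometry $v_k\in B$ with $v_k^\ast v_k=\alpha(e^{(k)}_{11})$ and $v_kv_k^\ast=\beta(e^{(k)}_{11})$. I would then set $U_k=\sum_{i=1}^{d_k}\beta(e^{(k)}_{i1})\,v_k\,\alpha(e^{(k)}_{1i})$ and $U=\sum_{k=1}^{n}U_k$. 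A direct matrix-unit computation shows $U_k^\ast U_k=\alpha(z_k)$ and $U_kU_k^\ast=\beta(z_k)$, where $z_k$ is the unit of the $k$-th block; since $\alpha$ and $\beta$ are unital and the blocks are orthogonal, the cross terms vanish, $U$ is a unitary, and the same computation gives $U\alpha(x)U^\ast=\beta(x)$ for all $x\in F$. This is the routine part.

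For part (ii) I would start from the unitary $U$ produced in (i) (its hypotheses hold because $\alpha(p)\sim 1\sim\beta(p)$ forces $\alpha(p)\sim\beta(p)$) and correct it inside the relative commutant. Applying $\pi$ to $U\alpha(x)U^\ast=\beta(x)$ and using $\pi\alpha=\pi\beta$ gives $\pi(U)\,\pi\alpha(x)\,\pi(U)^\ast=\pi\alpha(x)$, so $\pi(U)$ is a unitary in the relative commutant $(\pi\alpha(F))'\cap(B/I)$. The key observation is that replacing $U$ by $UW$ for any unitary $W\in\alpha(F)'\cap B$ preserves the conjugation identity, since $W$ commutes with $\alpha(F)$ yields $UW\alpha(x)(UW)^\ast=U\alpha(x)U^\ast=\beta(x)$. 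Hence it suffices to find such a $W$ with $\pi(W)=\pi(U)^\ast$, for then $U'=UW$ satisfies $\pi(U')=1$ and still implements $\beta=U'\alpha(\cdot)U'^\ast$.

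The heart of the argument, which I expect to be the main obstacle, is producing this $W$ by lifting $\pi(U)^\ast$ through $\pi$ restricted to the relative commutant. Writing $p_k=\alpha(e^{(k)}_{11})$, the standard commutant computation for unital copies of matrix algebras identifies $\alpha(F)'\cap B\cong\bigoplus_{k=1}^{n}p_kBp_k$, and, since $\pi$ is surjective, $\pi\bigl(\alpha(F)'\cap B\bigr)=(\pi\alpha(F))'\cap(B/I)\cong\bigoplus_{k=1}^{n}\pi(p_k)(B/I)\pi(p_k)$. Here is where the hypotheses enter: $\alpha(p_k)\sim 1$ in $B$ gives $\pi(p_k)\sim 1$ in $B/I$, so each corner $\pi(p_k)(B/I)\pi(p_k)$ is isomorphic to $B/I$ and therefore has connected unitary group, whence the finite direct sum does too. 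Thus $\pi(U)^\ast$ is a unitary in the quotient $\pi(D)$ of the unital algebra $D:=\alpha(F)'\cap B$ by $D\cap I$, and the unitary group of $\pi(D)$ is connected. By the lifting fact recalled at the start of this section (connected unitary group implies that unitaries lift to unitaries), $\pi(U)^\ast$ lifts to a unitary $W\in D=\alpha(F)'\cap B$, which completes the construction. The points requiring care are the identification of the relative commutant and the verification that $\pi$ maps it onto the relative commutant in the quotient; both follow from the matrix-unit decomposition together with the equivalences $p_k\sim 1$.
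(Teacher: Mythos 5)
Your proof is correct. Part (i) is the same as the paper's: the same partial isometries $v_k$ and the same formula $U=\sum_k\sum_i \beta(e^{(k)}_{i1})v_k\alpha(e^{(k)}_{1i})$. For part (ii) you repackage the paper's argument: the paper corrects each $v_k$ \emph{before} assembling the unitary, replacing $v_k$ by $w_k^*v_k$ where $w_k$ is a unitary lift of $\pi(v_k)\in q_k(B/I)q_k$ obtained from $q_k\sim 1$ and the connectedness of $\mathcal U(B/I)$; you assemble $U$ first and then correct it on the right by a single unitary $W$ in the relative commutant $D=\alpha(F)'\cap B\cong\bigoplus_k p_kBp_k$, lifted from $\pi(D)$. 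Unwinding your $W$ through the matrix units shows the two corrections differ only by which side of $v_k$ the corner unitary is multiplied on, and both rest on exactly the same corner-lifting fact, so the routes are essentially equivalent. The one place where your write-up is too quick is the claim that $\pi\bigl(\alpha(F)'\cap B\bigr)=(\pi\alpha(F))'\cap(B/I)$ ``since $\pi$ is surjective'': surjectivity of $\pi$ alone does not give this for a general subalgebra. For a finite-dimensional subalgebra it is true, but the proof goes through the conditional expectation $E(x)=\sum_k \tfrac{1}{d_k}\sum_{i,j}\alpha(e^{(k)}_{ij})\,x\,\alpha(e^{(k)}_{ji})$ onto $D$, which satisfies $\pi\circ E=E'\circ\pi$ with $E'$ the analogous expectation downstairs; applying this to any lift of an element of $(\pi\alpha(F))'\cap(B/I)$ produces a lift inside $D$. (It does not require $p_k\sim 1$, contrary to your closing remark.) With that step supplied, the argument is complete; the paper's version of (ii) avoids the issue entirely by never invoking the relative commutant.
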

\begin{proof} (i) Standard:  write $F$ as a direct sum of full matrix algebras and pick minimal projection $e_{11}^{(k)}$ in each summand.  Since
$\alpha(e_{11}^{(k)}) \sim \beta(e_{11}^{(k)})$, we find partial isometries $v_k \in B$ such that
\begin{equation}\label{1inLemma} v_k\alpha(e_{11}^{(k)})v_k^* = \beta(e_{11}^{(k)}).
\end{equation}
Let
$$U = \sum_k\sum_j \beta(e_{j1}^{(k)})v_k\alpha(e_{1j}^{(k)}).$$ It is straightforward to check that $U$ is unitary and for each matrix unit $e_{ij}^{(k)}$ we have
$U\alpha(e_{ij}^{(k)}) U^* = \beta(e_{ij}^{(k)})$ which implies
$$U\alpha U^* = \beta.$$

\medskip

(ii) Construct $U$ from $v_k$'s as above. Let $q_k = \pi\alpha(e_{11}^{(k)}) = \pi\beta(e_{11}^{(k)})$. Then
$$\pi(v_k)\pi(v_k)^* = q_k =  \pi(v_k)^*\pi(v_k),$$ i.e. $\pi(v_k)$ is unitary in $q_k B/I q_k$. Since $\alpha(e_{11}^{(k)}) \sim 1$, we have
$q_k \sim \pi(1) = 1$, so there is $x_k\in B/I$ such that $x_k^*x_k= 1, x_kx_k^* = q_k.$ This induces an isomorphism
$$B/I \cong q_k B/I q_k,$$ defined by $ a \mapsto x_kax_k^*$. In particular it sends 1 to $q_k$.
Therefore
$$\mathcal U(q_k B/I q_k) \cong \mathcal U(B/I) = \mathcal U_0(B/I) \cong \mathcal U_0(q_k B/I q_k).$$ So $\pi(v_k)\in q_k B/I q_k$ is connected with $q_k$ by a path of unitaries in $q_k B/I q_k$ and hence lifts to a unitary $w_k\in \beta( e_{11}^{(k)}) B \beta(e_{11}^{(k)})$.
Now in our construction of $U$ in (i) we replace $v_k$ by $w_k^*v_k$, i.e. let
$$\tilde U = \sum_k\sum_j  \beta(e_{j1}^{(k)})w_k^*v_k\alpha(e_{1j}^{(k)}).$$
It is straightforward to check that $\pi(\tilde U) = 1$. We have
$$w_k^*v_k \alpha(e_{11}^{(k)})(w_k^*v_k)^* = ^{(\ref{1inLemma})} w_k^*\beta(e_{11}^{(k)})w_k =  w_k^*w_kw_k^*w_k =  \beta(e_{11}^{(k)}).$$ Using this, same as in (i), it is straightforward to check that $\tilde U$ is unitary and $\tilde U\alpha\tilde U^* = \beta.$

\end{proof}


Below, for a $\ast$-homomorphism $r: A \to B(H)$, its composition with the canonical embedding $B(H) \hookrightarrow M(D\otimes K)$ will also be denoted by $r$.

\begin{theorem}\label{amalg} Suppose $F$ is a finite-dimensional C*-subalgebra of unital C*-algebras $A$ and $B$, and  unital $\ast$-homomorphisms $f_A: A \to M(D/I \otimes K)$ and  $f_B: B \to M(D/I \otimes K)$ coincide on $F$. Suppose there exist unital $\ast$-homomorphisms  $g_A: A \to M(D/I \otimes K)$ and  $g_B: B \to M(D/I \otimes K)$ such that
$f_A\oplus g_A$ and $f_B \oplus g_B$ lift to unital $\ast$-homomorphisms to $M(D\otimes K)$. Then there exist unital $\ast$-homomorphisms $h_A: A \to M(D/I \otimes K)$ and  $h_B: B \to M(D/I \otimes K)$ such that $f_A \oplus h_A$ and $f_B\oplus h_B$ lift compatibly on $F$.
\end{theorem}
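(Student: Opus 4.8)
The plan is to reduce the statement to Lemma~\ref{FinDim}(ii). Write $\Psi_A$ and $\Psi_B$ for the given unital $\ast$-homomorphism lifts of $f_A\oplus g_A$ and $f_B\oplus g_B$ to $M(D\otimes K)$, and put $\phi:=f_A|_F=f_B|_F$. Restricting to $F$ turns $\Psi_A|_F$ and $\Psi_B|_F$ into unital representations of the finite-dimensional algebra $F$ on $M(D\otimes K)$ whose images under the quotient map $\pi$ are $\phi\oplus g_A|_F$ and $\phi\oplus g_B|_F$. Since the copy of $\phi$ sits in a fixed corner, ``lifting $f_A\oplus h_A$ and $f_B\oplus h_B$ compatibly on $F$'' means exactly that the two lifts restrict to the same $\ast$-homomorphism on $F$; applying $\pi$ this forces $h_A|_F=h_B|_F$. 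So I would split the proof into two tasks: first choose liftable $h_A,h_B$ with $h_A|_F=h_B|_F$ in $M(D/I\otimes K)$, and then conjugate one of the lifts by a unitary trivial modulo $I$ so that the $F$-restrictions of the lifts coincide, not merely their $\pi$-images.

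For the first task I would fatten $g_A$ and $g_B$ by ample representations. Choosing $\ast$-homomorphisms $r_A\colon A\to M(D\otimes K)$ and $r_B\colon B\to M(D\otimes K)$ whose restrictions to $F$ are ample (every irreducible representation of $F$ occurring with infinite multiplicity, obtained e.g. from an infinite ampliation of a faithful representation), I set $h_A:=g_A\oplus(\pi\circ r_A)$ and $h_B:=g_B\oplus(\pi\circ r_B)$, so that $f_A\oplus h_A$ lifts to $\Psi_A\oplus r_A$ and $f_B\oplus h_B$ lifts to $\Psi_B'':=\Psi_B\oplus r_B$. Because an ample summand absorbs any representation of $F$, both $h_A|_F$ and $h_B|_F$ are ample, hence unitarily equivalent via some unitary $w\in M(D/I\otimes K)$. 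As the unitary group of $M(D/I\otimes K)$ is connected, $w$ lifts to a unitary $W\in M(D\otimes K)$; conjugating the $h_A$-corner by $w$ (and its lift by $W$) leaves $f_A$ untouched, replaces $h_A$ by a unitarily equivalent and still liftable representation, and achieves $h_A|_F=h_B|_F$ on the nose. Call the resulting lift $\Psi_A'$.

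For the second task, let $\alpha:=\Psi_A'|_F$ and $\beta:=\Psi_B''|_F$. These are unital representations of $F$ on $M(D\otimes K)$ with $\pi\circ\alpha=\pi\circ\beta=\phi\oplus h_A|_F$; moreover ampleness arranges that $\alpha(p)$ and $\beta(p)$ are full and properly infinite, hence $\sim 1$, for every nonzero projection $p\in F$. Lemma~\ref{FinDim}(ii) then produces a unitary $U\in M(D\otimes K)$ with $\pi(U)=1$ and $U\alpha U^{*}=\beta$. Since $\pi(U)=1$, the map $U\Psi_A'U^{*}$ is still a $\ast$-homomorphism lift of $f_A\oplus h_A$, and it now agrees with $\Psi_B''$ on $F$; these two lifts are the desired compatible lifts.

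The main obstacle is the first task: a given representation of the subalgebra $F$ need not extend to a representation of the ambient algebra $A$ (or $B$), so one cannot simply prescribe a common map $\psi=h_A|_F=h_B|_F$ and extend it in both directions. The device that circumvents this is ample absorption, which makes the two $F$-restrictions automatically unitarily equivalent regardless of the original $g_A,g_B$, combined with the connectedness of the unitary group of $M(D/I\otimes K)$ to upgrade that equivalence to an equality without disturbing the fixed corners $f_A$ and $f_B$. The remaining bookkeeping---that all conjugations preserve the relevant $\pi$-images and that ampleness indeed forces the projections to be equivalent to $1$ in the multiplier algebra---I expect to be routine.
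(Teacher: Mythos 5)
Your proposal is correct and follows essentially the same route as the paper: fatten $g_A,g_B$ by infinite ampliations of faithful representations so that all projections of $F$ map to projections equivalent to $1$, use Lemma~\ref{FinDim}(i) together with connectedness of the unitary group of $M(D/I\otimes K)$ to make the new summands agree on $F$ downstairs while preserving liftability, and then apply Lemma~\ref{FinDim}(ii) to conjugate one lift by a unitary trivial modulo the ideal so the lifts agree on $F$ upstairs. The only cosmetic difference is that you phrase the key equivalences via ampleness/proper infiniteness where the paper argues directly that $F_A(p)\ge r_A(p)\sim 1$ forces $F_A(p)\sim 1$.
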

\begin{proof} Let $\sigma_A: A \to B(H)$ and $\sigma_B: B \to B(H)$ be faithful representations and let
$$r_A = \sigma_A^{(\infty)}, \; r_B = \sigma_B^{(\infty)}.$$ Let
$$\tilde g_A = g_A \oplus r_A, \; \tilde g_B = g_B \oplus r_A.$$ Then $f_A\oplus \tilde g_A$ has a lift $F_A$ of the form
$$F_A = (\text{a lift of \;} f_A\oplus g_A) \oplus r_A, $$ and $f_B\oplus \tilde g_B$ has a lift $F_B$ of the form
$$F_B = (\text{a lift of \;} f_B\oplus g_B) \oplus r_B.$$ Then for any projection $p\in F$, $F_A(p)\ge r_A(p)\sim 1, $
$F_B(p)\ge r_B(p)\sim 1. $ By \cite[6.11.7.(c)]{BlackadarK-theory}
\begin{equation}\label{EqAmalgProd1} F_A(p)\sim 1, \; F_B(p)\sim 1. \end{equation}
Similarly, for any projection $p\in F$,
\begin{equation*} \tilde g_A(p)\sim 1, \; \tilde g_B(p)\sim 1. \end{equation*}
By Lemma \ref{FinDim} (i), there is unitary $u\in M(D/I \otimes K)$ such that
$$\tilde g_B|_F = u^*\tilde g_A|_F u.$$
Then $f_A \oplus u^*\tilde g_A u$ and $f_B \oplus \tilde g_B$ agree on $F$ and both lift. Indeed, since the unitary group of $M(D/I\otimes K)$ is connected \cite{Mingo}, $u$ has a unitary lift $U$, and then  $f_A \oplus u^*\tilde g_A u$ lifts to $\tilde F_A = (1\oplus U)^*F_A(1\oplus U). $
 Moreover, it follows from (\ref{EqAmalgProd1}) that
for any projection $p\in F$,
$$\tilde F_A(p) = (1\oplus U)^*F_A(p)(1\oplus U) \sim 1.$$  Since the unitary group of $M(D/I\otimes K)$ is connected, by Lemma \ref{FinDim} (ii), there is $U_1\in M(D\otimes K)$ such that $\pi(U_1) = 1$ and $\tilde F_A|_F = U_1^*F_B|_FU_1.$
Then $U_1^*F_BU_1$ is a lift of $f_B\oplus \tilde g_B$ and agrees on $F$ with the lift $\tilde F_A$ of $f_A \oplus u^*\tilde g_Au$.
\end{proof}

In the rest of this section  $F$  is a finite-dimensional $C^*$-subalgebra of $A$ and $B$.

\begin{corollary}\label{LPpassesToFreeProducts} If separable C*-algebras $A$ and $B$ have the LP, then  $A\ast_F B$ has the LP.
\end{corollary}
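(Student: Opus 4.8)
The plan is to verify condition (ii) of Corollary \ref{CharacterizationLP} for the separable C*-algebra $A\ast_F B$. So I fix a $\sigma$-unital C*-algebra $D$, an ideal $I$, and a $\ast$-homomorphism $f\colon A\ast_F B\to D/I$, and I must produce a $\ast$-homomorphism $g$ such that $f\oplus g$ lifts to a $\ast$-homomorphism $A\ast_F B\to M(D\otimes K)$. Restricting $f$ to the two free factors gives $\ast$-homomorphisms $f_A=f|_A$ and $f_B=f|_B$ into $D/I\hookrightarrow M(D/I\otimes K)$ which, crucially, agree on the common subalgebra $F$, since both are restrictions of the single map $f$.

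Since $A$ and $B$ have the LP, Corollary \ref{CharacterizationLP} furnishes $\ast$-homomorphisms $g_A\colon A\to M(D/I\otimes K)$ and $g_B\colon B\to M(D/I\otimes K)$ such that $f_A\oplus g_A$ and $f_B\oplus g_B$ lift to $\ast$-homomorphisms into $M(D\otimes K)$. To enter the unital framework of Theorem \ref{amalg} I would first normalize everything to be unital: fixing a faithful representation $\sigma$ of $A\ast_F B$ and replacing $f$ by $f\oplus\sigma^{(\infty)}$ (and enlarging $g_A,g_B$ accordingly), one may assume $f_A,f_B,g_A,g_B$ are unital into $M(D/I\otimes K)$. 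This is harmless, because Corollary \ref{CharacterizationLP} only requires the existence of \emph{some} second summand $g$, so the extra faithful representation is simply absorbed into it; and since the restrictions of $\sigma^{(\infty)}$ to $A$ and $B$ still agree on $F$, the maps $f_A,f_B$ continue to coincide there. With these hypotheses in place, Theorem \ref{amalg} produces unital $\ast$-homomorphisms $h_A\colon A\to M(D/I\otimes K)$ and $h_B\colon B\to M(D/I\otimes K)$ together with lifts $\tilde F_A$ of $f_A\oplus h_A$ and $\tilde F_B$ of $f_B\oplus h_B$ that agree on $F$, i.e.\ that lift compatibly on $F$.

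It then remains to glue. Applying the quotient map to $\tilde F_A|_F=\tilde F_B|_F$ gives $(f_A\oplus h_A)|_F=(f_B\oplus h_B)|_F$, and since $f_A|_F=f_B|_F$ already, this forces $h_A|_F=h_B|_F$. Hence $f_A\oplus h_A$ and $f_B\oplus h_B$ agree on $F$ and, by the universal property of the amalgamated free product, assemble into a single $\ast$-homomorphism $f\oplus g\colon A\ast_F B\to M(D/I\otimes K)$, where $g$ is glued from $h_A$ and $h_B$; likewise the compatible lifts $\tilde F_A,\tilde F_B$ assemble into a $\ast$-homomorphism $A\ast_F B\to M(D\otimes K)$ lifting $f\oplus g$. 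By Corollary \ref{CharacterizationLP} this shows $A\ast_F B$ has the LP. The entire substance of the argument lies in the compatibility on $F$: a priori the second summands $g_A,g_B$ and their lifts bear no relation over $F$, and the whole purpose of Theorem \ref{amalg} — via the unitary adjustment of Lemma \ref{FinDim}(ii) and the connectedness of the unitary group of $M(D/I\otimes K)$ — is to bend them into compatible shape without disturbing the fixed summands $f_A,f_B$. I expect the main nuisance, rather than a genuine obstacle, to be the unitality and fullness normalization needed to invoke Theorem \ref{amalg} and Lemma \ref{FinDim} (ensuring every projection of $F$ maps to a projection $\sim 1$), which is handled by the infinite-amplification trick already built into the proof of Theorem \ref{amalg}.
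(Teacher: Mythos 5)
Your core argument is the paper's own: restrict $f$ to the free factors, invoke Corollary \ref{CharacterizationLP} for $A$ and $B$ separately, feed the resulting data into Theorem \ref{amalg} to obtain lifts that are compatible on $F$, and glue both downstairs and upstairs via the universal property of $A\ast_F B$. The gluing step is handled correctly — in particular your observation that $\tilde F_A|_F=\tilde F_B|_F$ together with $f_A|_F=f_B|_F$ forces $h_A|_F=h_B|_F$, so that the glued lift really has the form $f\oplus g$ — and this is exactly the content the paper compresses into the sentence ``the statement follows from Theorem \ref{amalg} and Corollary \ref{CharacterizationLP}.''

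The gap is in your unitality normalization, in two respects. First, the statement does not assume $A$, $B$, or the inclusions of $F$ to be unital, and replacing $f$ by $f\oplus\sigma^{(\infty)}$ cannot produce unital $\ast$-homomorphisms out of a non-unital algebra: there are none. The paper reduces the non-unital case to the unital one by a different device, namely the identity $A^+\ast_{C^+}B^+=(A\ast_C B)^+$ (Lemma \ref{ForcedUnitization}) combined with the fact that the LP for a non-unital algebra is by definition the LP of its unitization; your argument as written simply does not cover this case. Second, even when everything is unital, $f_A=f|_A$ lands in $D/I\subset M(D/I\otimes K)$, so $(f_A\oplus\sigma^{(\infty)}|_A)(1)=f(1)\oplus 1$ is still not the unit of the amplified multiplier algebra; summing with a faithful infinite-multiplicity representation guarantees that projections of $F$ are sent to projections equivalent to $1$ (the hypothesis of Lemma \ref{FinDim}), but not literal unitality (the stated hypothesis of Theorem \ref{amalg}). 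To genuinely meet those hypotheses one should first reduce, via \cite[Lemma 13.1.2]{BO} and a unital Kasparov--Stinespring dilation, to the case where $f$ itself is a unital $\ast$-homomorphism into $M(D/I\otimes K)$ before restricting to the factors (or else rework Theorem \ref{amalg} so that only the ``$\sim 1$'' condition is required). As it stands you are invoking Theorem \ref{amalg} with hypotheses your normalization does not deliver.
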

\begin{proof} If $A$, $B$, and the amalgamation are unital, then the statement follows from  Theorem \ref{amalg} and Theorem \ref{CharacterizationLP}. The non-unital case follows from the following lemma.
\end{proof}

\begin{lemma}\label{ForcedUnitization} Let $A^+$ denote the forced unitization of $A$. Then for any $A, B, C$
$$A^+\ast_{C^+} B^+ = (A\ast_C B)^+.$$
\end{lemma}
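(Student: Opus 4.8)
The plan is to treat both sides as solutions to universal problems and to match them. The conceptual reason the identity holds is that forced unitization is a left adjoint to the forgetful functor from unital C*-algebras to C*-algebras, and hence preserves pushouts; an amalgamated free product is precisely such a pushout. I would nonetheless verify the relevant universal properties by hand, so that the isomorphism and its compatibility with the canonical inclusions are explicit. The one fact I would record first is the universal property of the forced unitization: for every C*-algebra $E$ and every unital C*-algebra $D$, each $\ast$-homomorphism $\phi: E\to D$ has a unique unital extension $\phi^+: E^+\to D$, given by $\phi^+(e+\lambda 1)=\phi(e)+\lambda 1_D$, and every unital $\ast$-homomorphism $E^+\to D$ restricts to a $\ast$-homomorphism $E\to D$; these two operations are mutually inverse, so $\mathrm{Hom}_{\mathrm{unital}}(E^+,D)\cong\mathrm{Hom}(E,D)$ naturally in $D$.

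Next I would spell out what each side classifies, for a fixed unital $D$. By the previous step a $\ast$-homomorphism $(A\ast_C B)^+\to D$ is the same thing as a $\ast$-homomorphism $A\ast_C B\to D$, which by the universal property of the amalgamated free product is exactly a pair $(\phi_A,\phi_B)$ of $\ast$-homomorphisms $\phi_A: A\to D$ and $\phi_B: B\to D$ with $\phi_A|_C=\phi_B|_C$. On the other side, because $A^+,B^+,C^+$ are unital and the inclusions $C^+\hookrightarrow A^+$, $C^+\hookrightarrow B^+$ are unital, $A^+\ast_{C^+}B^+$ is the amalgamated free product taken in the unital category, so a unital $\ast$-homomorphism $A^+\ast_{C^+}B^+\to D$ is exactly a pair $(\psi_A,\psi_B)$ of unital $\ast$-homomorphisms with $\psi_A|_{C^+}=\psi_B|_{C^+}$.

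Then I would exhibit a bijection between these two sets of data, natural in $D$: send $(\psi_A,\psi_B)$ to the restrictions $(\psi_A|_A,\psi_B|_B)$, which agree on $C$ since $\psi_A$ and $\psi_B$ already agree on $C^+\supseteq C$; and send $(\phi_A,\phi_B)$ to the unital extensions $(\phi_A^+,\phi_B^+)$, which agree on $C^+$ since they agree on $C$ and both carry $1_{C^+}$ to $1_D$. The universal property of the unitization shows these assignments are mutually inverse. Hence $A^+\ast_{C^+}B^+$ and $(A\ast_C B)^+$ represent the same functor on unital C*-algebras, and by Yoneda they are canonically isomorphic via an isomorphism respecting the canonical inclusions of $A^+$ and $B^+$ --- which is the claimed equality.

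The only step requiring genuine care, and the one I would flag as the main obstacle, is the bookkeeping of which category each amalgamated free product lives in: the left-hand product must be interpreted in the unital category (which is legitimate precisely because all three algebras and both structure maps are unital), whereas $A\ast_C B$ on the right is the ordinary, possibly non-unital, amalgamated free product. Once this is pinned down the argument is purely formal, resting only on the adjunction $\mathrm{Hom}_{\mathrm{unital}}(E^+,D)\cong\mathrm{Hom}(E,D)$ and the defining universal properties, with no analytic input required.
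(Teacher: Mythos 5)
Your argument is correct and is exactly the route the paper takes: the paper's proof is a one-line appeal to the universal property of the forced unitization (unique unital extension of homomorphisms into unital C*-algebras), and your proposal simply spells out the resulting identification of represented functors on unital C*-algebras, including the point that $A^+\ast_{C^+}B^+$ is the pushout in the unital category. No gaps.
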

\begin{proof} It is straightforward to verify the statement by using the universal property of forced unitization (it uniquely extends homomorphisms to unital C*-algebras to unital homomorphisms).
\end{proof}

\begin{corollary} (Ozawa \cite{Ozawa}) If separable C*-algebras $A$ and $B$ have the LLP, then  $A\ast_F B$ has the LLP.
\end{corollary}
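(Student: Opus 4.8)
The plan is to mimic the proof of Corollary~\ref{LPpassesToFreeProducts} line for line, substituting the LLP characterization Corollary~\ref{CharacterizationLLP} for the LP characterization Corollary~\ref{CharacterizationLP}. The key observation is that Corollary~\ref{CharacterizationLLP} reduces the LLP to lifting $\ast$-homomorphisms through the single fixed quotient $B(H)\to Q(H)$, which is precisely the instance $D=B(H)$, $I=K(H)$ of the data in Theorem~\ref{amalg}. Before running the argument I would record that this instance is admissible: since $B(H)$ and $Q(H)$ are unital, the algebras $B(H)\otimes K$ and $Q(H)\otimes K$ are $\sigma$-unital, so the Noncommutative Tietze Extension Theorem yields the surjection $M(B(H)\otimes K)\to M(Q(H)\otimes K)$ used inside Theorem~\ref{amalg}, and $\mathcal U\bigl(M(Q(H)\otimes K)\bigr)$ is connected by \cite{Mingo}. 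Thus every ingredient Theorem~\ref{amalg} requires is in place for the Calkin quotient.

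I would first dispose of the unital case, where $A$, $B$ and the amalgamation are unital. Given a unital $\ast$-homomorphism $f\colon A\ast_F B\to Q(H)$, I set $f_A=f|_A$ and $f_B=f|_B$; these are unital and coincide on $F$. Since $A$ and $B$ have the LLP, Corollary~\ref{CharacterizationLLP} supplies $\ast$-homomorphisms $g_A\colon A\to M(Q(H)\otimes K)$ and $g_B\colon B\to M(Q(H)\otimes K)$ for which $f_A\oplus g_A$ and $f_B\oplus g_B$ lift to $\ast$-homomorphisms into $M(B(H)\otimes K)$ (this being the quotient giving $Q(H)$); as explained at the end, these can be taken unital. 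Feeding $f_A,f_B,g_A,g_B$ into Theorem~\ref{amalg} then produces unital $h_A,h_B$ together with lifts $\widetilde F_A\colon A\to M(B(H)\otimes K)$ and $\widetilde F_B\colon B\to M(B(H)\otimes K)$ of $f_A\oplus h_A$ and $f_B\oplus h_B$ that agree on $F$.

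The remaining step is pure gluing. Because $\widetilde F_A|_F=\widetilde F_B|_F$, the universal property of $A\ast_F B$ combines them into a single $\ast$-homomorphism $\widetilde F\colon A\ast_F B\to M(B(H)\otimes K)$. Composing with the quotient map and using $f_A|_F=f_B|_F$ forces $h_A|_F=h_B|_F$, so $h_A$ and $h_B$ also glue to a $\ast$-homomorphism $g\colon A\ast_F B\to M(Q(H)\otimes K)$. Since the two homomorphisms $\pi\circ\widetilde F$ and $f\oplus g$ agree on the generating copies of $A$ and $B$, they coincide, so $f\oplus g$ lifts to $\widetilde F$; by Corollary~\ref{CharacterizationLLP} this proves the LLP for $A\ast_F B$. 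Finally the non-unital case reduces to this one exactly as in Corollary~\ref{LPpassesToFreeProducts}: by Lemma~\ref{ForcedUnitization} one has $(A\ast_F B)^+=A^+\ast_{F^+}B^+$, the hypothesis that $A,B$ have the LLP means $A^+,B^+$ have it, $F^+$ is a common finite-dimensional unital subalgebra, and the amalgamation is unital.

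I do not expect a serious obstacle here—the genuine content already resides in Theorem~\ref{amalg} and Corollary~\ref{CharacterizationLLP}, and what is left is bookkeeping. The one point deserving care, and the likeliest source of a gap, is unitality: I must make sure the maps $g_A,g_B$ produced by Corollary~\ref{CharacterizationLLP} can indeed be taken unital (so that Theorem~\ref{amalg} applies), which traces back through the reduction to u.c.p.\ maps \cite[Lemma 13.1.2]{BO} and the dilation used in Theorem~\ref{Characterization}, and that the gluing via the universal property respects units. Verifying that $B(H)\otimes K$ is genuinely $\sigma$-unital—so that $B(H)$, despite not being separable, is a legitimate choice of $D$—is the other routine check I would not want to skip.
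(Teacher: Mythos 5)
Your proposal follows exactly the route the paper takes: its proof of this corollary is literally ``By Corollary~\ref{CharacterizationLLP}, Theorem~\ref{amalg} and Lemma~\ref{ForcedUnitization}'', and your write-up is a faithful, correct expansion of that one line, mirroring the argument of Corollary~\ref{LPpassesToFreeProducts} with the Calkin quotient in place of $B/I$. The extra checks you flag (unitality of $g_A, g_B$ and $\sigma$-unitality of $B(H)\otimes K$) are sensible but routine, and nothing in your argument diverges from the paper's.
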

\begin{proof} By Theorem \ref{CharacterizationLLP}, Theorem \ref{amalg} and Lemma \ref{ForcedUnitization}.
\end{proof}

 We note that Ozawa proved the statement above without the assumption of separability.

\medskip

In \cite{ESS} it was proved that for any finitely generated virtually free group $G$, $C^*(G)$ is semiprojective. The same strategy and the use of Corollary \ref{LPpassesToFreeProducts} shows

\begin{corollary}\label{VirtuallyFree} Let $G$ be a finite tree product with finite edge groups. For a vertex $v$, let $G_v$ denote the corresponding vertex
group. If for each vertex $v$, $C^*(G_v)$ has the LP,  then $C^*(G)$ has the LP. In particular, if $G$ is a finitely generated virtually free group, then $C^*(G)$ has the LP.
\end{corollary}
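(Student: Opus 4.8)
The plan is to reduce the statement to an iterated application of Corollary \ref{LPpassesToFreeProducts}, using two standard facts about full group $C^*$-algebras. First, the full group $C^*$-algebra functor converts amalgamated free products of groups into amalgamated free products of $C^*$-algebras: if $H$ is a common subgroup of $G_1$ and $G_2$, then $C^*(G_1 \ast_H G_2) = C^*(G_1) \ast_{C^*(H)} C^*(G_2)$, since a unitary representation of the amalgam is precisely a compatible pair of unitary representations of $G_1$ and $G_2$ that agree on $H$. Second, if $H$ is finite then $C^*(H) = \mathbb{C}[H]$ is finite-dimensional. Thus an amalgamation of group $C^*$-algebras over a finite edge group is exactly an amalgamation over a finite-dimensional $C^*$-subalgebra, to which Corollary \ref{LPpassesToFreeProducts} applies.

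For the tree product statement I would induct on the number of vertices of the underlying finite tree $T$ (assuming throughout, as we may, that the vertex $C^*$-algebras are separable). In the base case $T$ has a single vertex $v$ and no edges, so $C^*(G) = C^*(G_v)$ has the LP by hypothesis. For the inductive step, choose a leaf $v_0$ of $T$, with unique incident edge $e_0$ joining it to the rest of the tree, and let $T'$ be the subtree obtained by deleting $v_0$ and $e_0$. By Bass--Serre theory the fundamental group decomposes as $G = G' \ast_{G_{e_0}} G_{v_0}$, where $G' = \pi_1(\mathcal{G}, T')$ is the tree product over $T'$. Every vertex group of $T'$ is one of the original vertex groups, so $G'$ is again a finite tree product with finite edge groups whose vertex groups have $C^*$-algebras with the LP; by the induction hypothesis $C^*(G')$ has the LP. Passing to $C^*$-algebras and using the two facts above gives $C^*(G) = C^*(G') \ast_{C^*(G_{e_0})} C^*(G_{v_0})$ with $C^*(G_{e_0})$ finite-dimensional, so Corollary \ref{LPpassesToFreeProducts} yields the LP for $C^*(G)$.

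For the ``in particular'' clause I would invoke the structure theory of finitely generated virtually free groups. Such a group is the fundamental group of a finite graph of finite groups (Karrass--Pietrowski--Solitar); following the strategy of \cite{ESS}, one realizes it as a finite tree product with finite edge groups whose vertex groups are amenable (in fact finite or virtually cyclic). Each such vertex group has nuclear full group $C^*$-algebra and therefore has the LP (finite groups even give finite-dimensional $C^*$-algebras). The first part of the corollary then applies.

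I expect the main obstacle to be exactly this last structural reduction rather than the $C^*$-algebraic induction. A general graph of finite groups has non-tree edges, which correspond to HNN extensions rather than amalgams and are not directly covered by Corollary \ref{LPpassesToFreeProducts}; for instance $F_2$ is not a tree product of finite groups at all, since its abelianization $\mathbb{Z}^2$ is infinite, and must instead be presented as $\mathbb{Z} \ast \mathbb{Z}$. The delicate point, supplied by the \cite{ESS} strategy, is to reorganize an arbitrary finite graph of finite groups into a genuine finite tree of groups in which the loop/HNN contributions are absorbed into amenable vertex groups such as $\mathbb{Z}$, leaving only amalgamated free products over finite edge groups.
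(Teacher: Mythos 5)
Your proposal matches the paper's (essentially unwritten) proof: the paper offers no argument beyond the remark that the strategy of \cite{ESS} together with Corollary \ref{LPpassesToFreeProducts} yields the result, and your tree induction via $C^*(G_1\ast_H G_2)=C^*(G_1)\ast_{C^*(H)}C^*(G_2)$ with $C^*(H)$ finite-dimensional, plus the Karrass--Pietrowski--Solitar/\cite{ESS} reduction for the virtually free case, is exactly that strategy spelled out. Your observation that the genuinely delicate point is absorbing the non-tree (HNN) edges into amenable vertex groups is precisely the part both you and the paper defer to \cite{ESS}, so the approaches coincide.
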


\begin{corollary} Let $G$ be a finite graph of groups with finite edge groups. If for each vertex $v$, $C^*(G_v)$ has the LP,  then the fundamental group of $G$ has the LP.
\end{corollary}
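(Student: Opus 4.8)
The plan is to reduce to the tree case of Corollary \ref{VirtuallyFree}, together with the claim that the LP survives a single HNN extension over a finite subgroup. By Bass--Serre theory, after fixing a maximal subtree $T$ of the finite underlying graph of $G$, the fundamental group $\pi_1(G)$ is obtained from the tree product $\pi_1(G|_T)$ by performing one HNN extension for each of the finitely many edges outside $T$, the stable letter conjugating one finite edge group onto another. The vertex groups of $G|_T$ are among the $G_v$, so $C^*(\pi_1(G|_T))$ has the LP by Corollary \ref{VirtuallyFree}. Hence it suffices to show that if $C^*(\Gamma_0)$ has the LP and $\Gamma=\mathrm{HNN}(\Gamma_0,H,\phi)$ for a finite subgroup $H\le\Gamma_0$ and an isomorphism $\phi\colon H\to H'\le\Gamma_0$, then $C^*(\Gamma)$ has the LP; iterating this finitely many times proves the corollary.

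For the HNN step I would imitate Theorem \ref{amalg} using Corollary \ref{CharacterizationLP}(ii), now with a single algebra carrying two copies of a finite-dimensional subalgebra. Write $C^*(\Gamma)$ as generated by $C^*(\Gamma_0)$ and a unitary $u$ with $u^*cu=\Phi(c)$ for $c$ in the finite-dimensional algebra $D_0=C^*(H)$, where $\Phi\colon D_0\to D_0'=C^*(H')$ is induced by $\phi$. Given a $\ast$-homomorphism $f\colon C^*(\Gamma)\to D/I$, set $f_0=f|_{C^*(\Gamma_0)}$ and $w=f(u)$, so that $w^*f_0(c)w=f_0(\Phi(c))$. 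By the LP of $C^*(\Gamma_0)$ and Corollary \ref{CharacterizationLP}(ii) choose $g_0$ with $f_0\oplus g_0$ lifting to a $\ast$-homomorphism $F_0\colon C^*(\Gamma_0)\to M(D\otimes K)$, and absorb an infinite-multiplicity faithful representation into both $F_0$ and $g_0$, exactly as in Theorem \ref{amalg}, so that $F_0(p)\sim1$, $F_0(\Phi(p))\sim1$, $g_0(p)\sim1$ and $g_0(\Phi(p))\sim1$ for every nonzero projection $p\in D_0$.

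The two unital $\ast$-homomorphisms $F_0|_{D_0}$ and $(F_0\circ\Phi)|_{D_0}$ out of $D_0$ now carry every projection to a full projection, so Lemma \ref{FinDim} applies. The delicate point is to realize the intertwining at the quotient level compatibly with the $f\oplus g$ splitting: using $w$ on the $f$-summand and Lemma \ref{FinDim}(i) on the $g$-summand one builds a block-diagonal unitary $v=w\oplus v_g$ in $M(D/I\otimes K)$ conjugating $\pi\circ F_0|_{D_0}$ to $\pi\circ(F_0\circ\Phi)|_{D_0}$; lifting $v$ to a unitary and correcting it by Lemma \ref{FinDim}(ii) (whose hypothesis is arranged by the first conjugation, and which uses connectedness of the unitary group of $M(D\otimes K)$) yields a unitary $U\in M(D\otimes K)$ with $U^*F_0(c)U=F_0(\Phi(c))$ and $\pi(U)=v$. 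Then $F_0$ and $U$ assemble into a $\ast$-homomorphism $C^*(\Gamma)\to M(D\otimes K)$ lifting $f\oplus g$, where $g$ extends $g_0$ by $g(u)=v_g$; this is condition (ii) of Corollary \ref{CharacterizationLP} for $C^*(\Gamma)$.

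The main obstacle is precisely this compatibility bookkeeping: one must ensure the quotient-level conjugator can be taken block-diagonal with the prescribed $w$ on the $f$-summand, so that $\pi(U)$ descends to an honest $\ast$-homomorphism $g$ on all of $C^*(\Gamma)$ and not merely on $C^*(\Gamma_0)$; this is what forces the preliminary absorption making every $D_0$-projection full, just as in Theorem \ref{amalg}. As an alternative route avoiding this direct computation, one can realize $C^*(\Gamma)$ as a full corner of an amalgamated free product, over a finite-dimensional subalgebra assembled from $D_0$ and $D_0'$, of C*-algebras built from $C^*(\Gamma_0)$ (hence with the LP); Corollary \ref{LPpassesToFreeProducts} then gives the LP for the free product, and since compression by the full projection is a completely positive map restricting to the identity on the corner, the LP passes to $C^*(\Gamma)$.
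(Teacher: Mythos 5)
Your proposal is correct and follows what the paper intends here: the paper offers no written proof beyond invoking ``the same strategy'' (Bass--Serre decomposition plus Corollary \ref{LPpassesToFreeProducts}), and your reduction to the tree case of Corollary \ref{VirtuallyFree} followed by finitely many HNN extensions over finite subgroups, with the HNN step carried out by the Theorem \ref{amalg}/Lemma \ref{FinDim} machinery via Corollary \ref{CharacterizationLP}(ii), is exactly that strategy with the missing details supplied. The block-diagonal conjugator $v=w\oplus v_g$ and the correction by Lemma \ref{FinDim}(ii) after absorbing an infinite-multiplicity faithful representation are handled correctly, so the argument goes through at the same level of rigor as the paper's own proof of Theorem \ref{amalg}.
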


\medskip

\subsection{Some other lifting properties}

The same technique as above works for some other lifting properties. We note that in Lemma \ref{FinDim} (ii)
the assumptions that $\alpha (p)\sim 1$ and $ \beta(p)\sim 1$ and that the unitary group of $B/I$ is connected are needed only
to ensure that for a partial isometry $v_p$ conjugating $\alpha(p)$ and $\beta(p)$, the unitary $\pi(v_p)\in \pi(\beta(p)B\beta(p))$ lifts to a unitary in $\beta(p)B\beta(p)$.
Thus Lemma \ref{FinDim} (ii) can be reformulated in the following way.

\begin{lemma}\label{FinDimTechnical} Suppose $F = \sum_{i=1}^m M_{n_i}$ is a finite-dimensional C*-algebra, $B$ is a unital C*-algebra, $I$ is an ideal in $B$, $\pi: B\to B/I$ is the canonical surjection, $\alpha, \beta: F \to B$ are unital $\ast$-homomorphisms, and  $\pi\alpha = \pi\beta$.
For each $i=1, \ldots, m$, let $p_i\in  M_{n_i}$ be a minimal projection.
 If, for each  $i=1, \ldots, m$, $\alpha (p_i)\sim \beta(p_i)$, and there is a partial isometry $v_i$ conjugating $\alpha (p_i)$ and $ \beta(p_i)$ such that $\pi(v_i)$ lifts to a unitary in $\beta(p_i)B\beta(p_i)$, then   there is a unitary $U\in B$ such that
 $\alpha = U^*\beta U$ and  $\pi(U) =1$.
\end{lemma}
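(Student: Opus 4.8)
The plan is to follow the explicit blueprint of the proof of Lemma~\ref{FinDim}, but to replace each step where connectedness of the unitary group of $B/I$ (together with the conditions $\alpha(p_i)\sim 1\sim\beta(p_i)$) was invoked by the hypothesis now supplied directly: that for each $i$ there is a partial isometry $v_i$ conjugating $\alpha(p_i)$ and $\beta(p_i)$ with $\pi(v_i)$ lifting to a unitary $w_i\in\beta(p_i)B\beta(p_i)$. First I would fix minimal projections $p_i=e_{11}^{(i)}\in M_{n_i}$ and, as in Lemma~\ref{FinDim}(i), assemble the unitary
\[
U=\sum_{i}\sum_{j}\beta\bigl(e_{j1}^{(i)}\bigr)\,w_i^*v_i\,\alpha\bigl(e_{1j}^{(i)}\bigr),
\]
exactly the modified form $\tilde U$ used in part~(ii), where $e_{j1}^{(i)}$ are the matrix units of the $i$-th summand.

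The verification then splits into three routine checks that I would carry out in order. First, using $w_i^*v_i\,\alpha(p_i)\,(w_i^*v_i)^*=\beta(p_i)$ — which follows from the conjugation property of $v_i$ together with the fact that $w_i$ is a unitary in $\beta(p_i)B\beta(p_i)$, so that $w_i^*\beta(p_i)w_i=\beta(p_i)$ — one checks that $U$ is unitary and intertwines the matrix units, giving $U^*\beta U=\alpha$ on all of $F$ (note the direction matches the statement $\alpha=U^*\beta U$). Second, I would verify $\pi(U)=1$: applying $\pi$ to the defining sum and using $\pi\alpha=\pi\beta$ together with $\pi(w_i^*v_i)=\pi(w_i)^*\pi(v_i)=\pi(v_i)^*\pi(v_i)=\pi\beta(p_i)$, the sum telescopes through the matrix-unit relations to $\sum_i\pi\beta(e_{11}^{(i)})=\pi\beta(1)=1$. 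This is precisely the computation already asserted (for $\tilde U$) in the proof of Lemma~\ref{FinDim}(ii).

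I would present this by remarking that the only place connectedness entered Lemma~\ref{FinDim}(ii) was to produce, for each $i$, a unitary lift $w_i\in\beta(p_i)B\beta(p_i)$ of $\pi(v_i)$; once such $w_i$ are handed to us directly, the rest of the argument is verbatim. So the proof is essentially a bookkeeping reorganization, and I do not expect any genuine obstacle — the content is entirely extracted from the existing proof.

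The one point deserving care, and the step I would watch most closely, is the precise algebra in which $\pi(v_i)$ is required to be a unitary and to lift. The hypothesis says $\pi(v_i)$ lifts to a unitary in $\beta(p_i)B\beta(p_i)$, so I must confirm that $\pi(v_i)$ indeed lies in $\pi(\beta(p_i)B\beta(p_i))=\pi\beta(p_i)\,(B/I)\,\pi\beta(p_i)$ and is unitary there. This is immediate: from $v_i\alpha(p_i)v_i^*=\beta(p_i)$ and $\pi\alpha(p_i)=\pi\beta(p_i)$ one gets $\pi(v_i)\pi(v_i)^*=\pi\beta(p_i)=\pi(v_i)^*\pi(v_i)$, so $\pi(v_i)$ is a unitary in the corner $\pi\beta(p_i)\,(B/I)\,\pi\beta(p_i)$, which is exactly where a lift in $\beta(p_i)B\beta(p_i)$ projects. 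Having checked this compatibility, the lift $w_i$ satisfies $\pi(w_i)=\pi(v_i)$ and the computations above close the proof.
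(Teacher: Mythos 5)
Your proof is correct and is essentially the paper's own argument: the paper obtains Lemma \ref{FinDimTechnical} precisely by observing, as you do, that the hypotheses $\alpha(p)\sim 1\sim\beta(p)$ and the connectedness of $\mathcal U(B/I)$ in Lemma \ref{FinDim}(ii) served only to produce the unitary lifts $w_i\in\beta(p_i)B\beta(p_i)$ of $\pi(v_i)$, after which the construction of $U=\sum_{i,j}\beta(e_{j1}^{(i)})w_i^*v_i\alpha(e_{1j}^{(i)})$ and its verification go through verbatim. The only (cosmetic) slip is in your telescoping of $\pi(U)$, which collapses to $\sum_i\sum_j\pi\beta(e_{jj}^{(i)})=\pi\beta(1)=1$ rather than to $\sum_i\pi\beta(e_{11}^{(i)})$.
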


Recall that a separable C*-algebra $A$ is {\it semiprojective}  if for every separable C*-algebra $B$, every increasing sequence of ideals $J_1\subseteq J_2 \subseteq \ldots $ in $B$, and every $\ast$-homomorphism $\phi: A \to B/{\overline {\bigcup J_k}}$, there exists an $n\in \mathbb N$ and a $\ast$-homomorphism $\psi:A \to B/J_n$ such that
$\pi_{n, \infty}\circ \psi = \phi$,
where $\pi_{n, \infty}: B/J_n \to B/\overline{\bigcup J_k}$  is the natural quotient map.

In  \cite{BlackadarSP} Blackadar states that the class  of semiprojective C*-algebras is closed under free products amalgamated over finite-dimensional subalgebras. For a proof he states that it is similar to Effros and Kaminker's proof of analogous result for their version of semiprojectivity \cite{EffrosKaminker}. Here we give an alternative and explicit proof of this fact.

\begin{corollary}\label{SP} (Blackadar \cite{BlackadarSP}) If $A$ and $B$ are semiprojective, then $A\ast_F B$ is semiprojective.
\end{corollary}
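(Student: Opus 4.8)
The plan is to mimic the structure of the proof of Corollary \ref{LPpassesToFreeProducts}, replacing the characterization of the LP by the definition of semiprojectivity and replacing Theorem \ref{amalg} by an analogous gluing argument built on Lemma \ref{FinDimTechnical}. By Lemma \ref{ForcedUnitization} it suffices to treat the unital case, so assume $A$, $B$ and the amalgamation are unital. Let $J_1 \subseteq J_2 \subseteq \cdots$ be an increasing sequence of ideals in a separable C*-algebra $D$ with $J = \overline{\bigcup_k J_k}$, and let $\phi : A \ast_F B \to D/J$ be a $\ast$-homomorphism. Write $\phi_A$ and $\phi_B$ for its restrictions to $A$ and $B$; these agree on $F$.

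First I would use semiprojectivity of $A$ and of $B$ separately to partially lift. Since $A$ is semiprojective, there is $n_A$ and a $\ast$-homomorphism $\psi_A : A \to D/J_{n_A}$ with $\pi_{n_A,\infty}\circ\psi_A = \phi_A$; similarly $\psi_B : B \to D/J_{n_B}$. Passing to $n = \max(n_A, n_B)$ and composing with the connecting maps, I may assume both lift to the \emph{same} quotient $D/J_n$. The two lifts $\psi_A|_F$ and $\psi_B|_F$ of $\phi|_F$ need not coincide on $F$, but they agree after applying the quotient $D/J_n \to D/J$; since $F$ is finite-dimensional with $\psi_A(F), \psi_B(F)$ two unital copies of $F$ inside $D/J_n$ that become equal in $D/J$, the idea is to conjugate one into the other by a unitary that is trivial modulo $J$. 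Here is the point where I invoke Lemma \ref{FinDimTechnical}, applied with $B$ replaced by (the unitization of) $D/J_n$ and $I$ replaced by $J/J_n \subseteq D/J_n$: for each minimal projection $p_i$ the images $\psi_A(p_i)$ and $\psi_B(p_i)$ are Murray--von Neumann equivalent and differ by a partial isometry whose image downstairs lifts to a unitary, so there is a unitary $U \in D/J_n$ with $\pi(U) = 1$ in $D/J$ and $\psi_A|_F = U^*\psi_B|_F\, U$.

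Replacing $\psi_B$ by $U^*\psi_B U$ then yields two partial lifts to $D/J_n$ that genuinely agree on $F$, so by the universal property of the amalgamated free product they assemble into a single $\ast$-homomorphism $\psi : A \ast_F B \to D/J_n$. Finally I would check that $\pi_{n,\infty}\circ\psi = \phi$: this holds on $A$ and on $B$ by construction (using $\pi(U)=1$ to see that conjugating $\psi_B$ does not change its image in $D/J$), and since $A$ and $B$ generate $A\ast_F B$, it holds everywhere. This exhibits the required lift to $D/J_n$, proving semiprojectivity of $A\ast_F B$.

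The main obstacle I anticipate is verifying the hypotheses of Lemma \ref{FinDimTechnical} in this setting, specifically that the relevant partial isometry conjugating $\psi_A(p_i)$ and $\psi_B(p_i)$ has image in $D/J$ lifting to a unitary in the corner $\psi_B(p_i)(D/J_n)\psi_B(p_i)$. Unlike in Theorem \ref{amalg}, where the stabilization by $K$ and the infinite-repeat representations $r_A, r_B$ were used to force every projection to be equivalent to $1$ and to make connectedness of the unitary group of the multiplier algebra available, here there is no stabilization, so one must either arrange the partial lifts to factor through a stabilized picture or argue the lifting of the relevant corner unitaries directly. A clean way around this is to first stabilize, replacing $D/J_n$ by $M(D/J_n \otimes K)$ and the $\psi$'s by amplifications, so that the cut-down corners all contain copies of a faithful infinite-repeat representation and the needed equivalences and connectedness come for free exactly as in the proof of Theorem \ref{amalg}; one then descends back to an honest lift by cutting down by a corner projection at the end. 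Managing this stabilize-then-descend step while keeping the index $n$ under control is the delicate part of the argument.
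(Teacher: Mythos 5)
Your skeleton is exactly the paper's: lift $f_A$ and $f_B$ separately to a common finite stage $D/I_N$ using semiprojectivity, conjugate one restriction to $F$ into the other via Lemma \ref{FinDimTechnical} with a unitary $U$ satisfying $\pi(U)=1$, and assemble by the universal property. The gap is in how you propose to verify the hypotheses of Lemma \ref{FinDimTechnical}. The paper does \emph{not} stabilize: it invokes the standard finite-stage lifting facts for quotients of the form $D/\overline{\bigcup I_n}$ (cited to \cite{BlackadarSP}), namely that any unitary, and any Murray--von Neumann equivalence between two projections, in $D/\overline{\bigcup I_n}$ lifts to $D/I_N$ once $N$ is large enough. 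Since $F_A(p_i)$ and $F_B(p_i)$ become \emph{equal} in $D/\overline{\bigcup I_n}$, one first lifts that (trivial) equivalence to a partial isometry $v_i$ in $D/I_N$ conjugating $F_A(p_i)$ to $F_B(p_i)$, and then the corner $f_B(p_i)\left(D/\overline{\bigcup I_n}\right)f_B(p_i)$ is again a quotient of the same increasing-union form, so the unitary $\pi(v_i)$ in it lifts to a unitary in the corresponding corner of $D/I_N$ after a further increase of $N$. This is a cheap, purely finite-stage argument; no connectedness of unitary groups and no copies of $1$ are needed.

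Your proposed workaround --- stabilize to $M(D/J_n\otimes K)$, amplify by infinite-repeat representations as in Theorem \ref{amalg}, then ``descend by cutting down by a corner projection'' --- does not close the gap. After conjugating the amplified lift by the unitary that matches the two copies of $F$, the lift is no longer block-diagonal, so its compression to the original corner is merely a ccp map, not a $\ast$-homomorphism; and the assembled map on $A\ast_F B$ compressed to that corner is likewise not a $\ast$-homomorphism. This loss is harmless for the LP precisely because Corollary \ref{CharacterizationLP} only asks that $f\oplus g$ lift for \emph{some} auxiliary $g$, but semiprojectivity demands an honest $\ast$-homomorphism lift of $\phi$ itself, and no analogous ``$f\oplus g$'' characterization of semiprojectivity is available. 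So the stabilize-then-descend step, which you yourself flag as delicate, is in fact where the argument breaks; the missing ingredient is the finite-stage liftability of unitaries and of equivalences of projections in quotients by increasing unions of ideals.
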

\begin{proof} The non-unital case will follow from the unital one by Lemma \ref{ForcedUnitization} and the fact that a C*-algebra is semiprojective if and only if its forced unitization is semiprojective. So suppose unital $\ast$-homomorphisms $f_A: A \to D/\overline{\bigcup I_n}$ and $f_B: B \to D/\overline{\bigcup I_n}$ agree on $F$. There is $N\in \mathbb N$ such that they lift to unital $F_A: A \to D/ I_N$ and $F_B: B \to D/ I_N$ respectively.
Let $p_i\in  M_{n_i}$ be a minimal projection, $i=1, \ldots, m$.
It is known (\cite{BlackadarSP}) that in quotients of the form $D/\overline{\bigcup I_n}$ any unitary and any two equivalent projections lift to $D/I_N$, for $N$ sufficiently large.
Therefore  we can assume that  for each  $i=1, \ldots, m$, $F_A (p_i)\sim F_B(p_i)$, and there is a partial isometry $v_i$ conjugating them such that the unitary
$\pi(v_i)\in f_B(p_i)\left(D/\overline{\bigcup I_n}\right)f_B(p_i) = \left(f_B(p_i)D f_B(p_i)\right)/\overline{\bigcup f_B(p_i)I_n f_B(p_i)} $ lifts to a unitary in $ f_B(p_i)\left(D/I_N\right) f_B(p_i)$.
By Lemma \ref{FinDimTechnical} there is $U\in D/I_N$ such that $\pi(U) =1$ and
 $$F_B|_F = U^* F_A|_F U.$$ Then  $U^* F_A U$ and $F_B$ are lifts of $f_A, f_B$ respectively and agree on $F$.
\end{proof}

\medskip

Let $M_n$ be the algebra of $n$-by-$n$ matrices. A C*-algebra is called {\it residually finite-dimensional} (RFD)
if it has a separating family of finite-dimensional representations.

\medskip

Let $H$ be a Hilbert space and let $P_{\alpha}$, $\alpha \in \Lambda$,  be an increasing net of projections of dimension $n_{\alpha}$ $\ast$-strongly converging to $1_{B(H)}$. We will identify $M_{n_{\alpha}}$ with $P_{\alpha} B(H) P_{\alpha}$.
Let $\mathcal D \subset \prod_{\alpha\in \Lambda} M_{n_{\alpha}}$ be the C*-algebra of all $\ast$-strongly convergent nets of operators indexed by $\Lambda$ and let $\pi: \mathcal D \to B(H)$ be the surjection that sends each net  to its $\ast$-strong limit.



\begin{corollary}\label{RFD}(Li and Shen \cite{LiShen})\label{LiShen} Let $A$ be unital. Then $A\ast_F A$ is RFD if and only if $A$ is RFD.
\end{corollary}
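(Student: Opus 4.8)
The plan is to prove the two implications separately, the forward one being immediate and the reverse one being where the algebra $\mathcal D$ and the surjection $\pi\colon\mathcal D\to B(H)$ set up above do the work. If $A\ast_F A$ is RFD then so is $A$, since $A$ sits unitally inside $A\ast_F A$ as either free factor and a separating family of finite-dimensional representations restricts to one of $A$. For the converse I would use the characterization that $\mathcal D$ is designed to provide: a separable unital C*-algebra $C$ is RFD precisely when a faithful unital representation $\rho\colon C\to B(H)$ lifts along $\pi$ to a unital $\ast$-homomorphism $\tilde\rho\colon C\to\mathcal D$. Indeed such a lift is a net of finite-dimensional representations $\rho_\alpha\colon C\to M_{n_\alpha}$ converging $\ast$-strongly to $\rho$, and faithfulness of $\rho$ makes the family $\{\rho_\alpha\}$ separate points; conversely an RFD algebra is exactly one admitting such a net.

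Assume then that $A$ is RFD (and, as throughout, separable) and fix a faithful unital representation $\rho\colon A\ast_F A\to B(H)$. Restricting to the two copies of $A$ writes $\rho=\rho_1\ast_F\rho_2$ with $\rho_1|_F=\rho_2|_F=:\psi$. The goal is to produce unital $\ast$-homomorphisms $\tilde\rho_1,\tilde\rho_2\colon A\to\mathcal D$ lifting $\rho_1,\rho_2$ and agreeing on $F$: the universal property of $\ast_F$ then glues them to a unital lift $A\ast_F A\to\mathcal D$ of $\rho$, and the characterization above gives the conclusion. Since $A$ is RFD each $\rho_i$ is a $\ast$-strong limit of finite-dimensional representations and so lifts to some unital $\tilde\rho_i'\colon A\to\mathcal D$; the remaining issue is compatibility on $F$. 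For this I would try to correct one lift by a unitary $U\in\mathcal D$ with $\pi(U)=1$ so that $U^*\tilde\rho_2'U$ and $\tilde\rho_1'$ coincide on $F$, which is exactly Lemma \ref{FinDimTechnical} applied to the two unital lifts $\tilde\rho_1'|_F,\tilde\rho_2'|_F$ of $\psi$.

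The hard part is verifying the hypotheses of Lemma \ref{FinDimTechnical}, namely that for each minimal projection $p_i\in M_{n_i}\subset F$ the projections $\tilde\rho_1'(p_i)$ and $\tilde\rho_2'(p_i)$ are Murray--von Neumann equivalent in $\mathcal D$ through a partial isometry whose image lifts in the corner. In contrast with Theorem \ref{amalg}, where one works in $M(D\otimes K)$ and can absorb an infinite-multiplicity representation to make every projection equivalent to $1$, the unit of $\mathcal D$ is finite: a projection of $\mathcal D$ is a $\ast$-strongly convergent net of finite-rank projections, and two of these are equivalent exactly when they have equal rank in each block $M_{n_\alpha}$. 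So the real task is to arrange that the finite-dimensional approximations of $\rho_1$ and $\rho_2$ restrict to $F$ with the same block-by-block multiplicities, and this is where the symmetry of the two factors is essential. Because $\rho_1|_F=\rho_2|_F=\psi$, the attainable $F$-multiplicity patterns are governed only by the finite-dimensional representation theory of the single algebra $A$ together with the common target $\psi$, hence are the same for both factors; I would therefore choose the block projections $P_\alpha$ to commute with $\psi(F)$ and select one common sequence of multiplicity patterns, approximating each $\rho_i$ by finite-dimensional representations whose restriction to $F$ is the corresponding compression of $\psi$. Then $\tilde\rho_1'|_F=\tilde\rho_2'|_F$ outright, no conjugation is needed, and the gluing goes through. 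Making this simultaneous choice of multiplicities compatible with good $\ast$-strong approximation of both $\rho_1$ and $\rho_2$ is the technical heart of the argument, and it is precisely the step that breaks down for an unrelated second factor, which is what forces the extra hypothesis in the general statement Theorem \ref{RFDunital}.
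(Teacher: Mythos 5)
Your overall architecture matches the paper's: lift the two restrictions to the algebra $\mathcal D$ of $\ast$-strongly convergent nets, force the lifts to agree on $F$ via a unitary that is $1$ modulo the kernel (Lemma \ref{FinDimTechnical}), glue by the universal property of $\ast_F$, and conclude from faithfulness of the composite. The forward direction and the ``RFD $\Leftrightarrow$ faithful representations lift to $\mathcal D$'' characterization are fine. But there is a genuine gap exactly at the step you yourself flag as ``the technical heart of the argument.'' You propose to arrange $\tilde\rho_1'|_F=\tilde\rho_2'|_F$ outright by choosing block projections commuting with $\psi(F)$ and approximating each $\rho_i$ by finite-dimensional representations of $A$ ``whose restriction to $F$ is the corresponding compression of $\psi$.'' Nothing guarantees this is possible: the Exel--Loring approximations of $\rho_1$ and $\rho_2$ come with uncontrolled, generally different multiplicity patterns on $F$ in each block, the multiplicity data is discrete so $\ast$-strong closeness to $\psi$ does not pin it down, and there is no reason that a prescribed $F$-multiplicity pattern (namely the compression of $\psi$) is realized by \emph{any} finite-dimensional representation of $A$ in that block, let alone by one approximating $\rho_i$. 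Your heuristic that ``the attainable multiplicity patterns are the same for both factors'' is true but does not produce a single coordinated sequence of approximations.

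The paper closes this gap with a symmetrization trick you are missing: replace the embedding $f_1\ast f_2$ by $(f_1\oplus f_2)\ast(f_2\oplus f_1)=(f_1\ast f_2)\oplus(f_2\ast f_1)$, which is still an embedding, and lift its two factors to $\bar f_1=\tilde f_1\oplus\tilde f_2$ and $\bar f_2=\tilde f_2\oplus\tilde f_1$. In each block these restrict to $F$ as the \emph{same} representation up to a permutation of summands, so $\bar f_1(p)\sim\bar f_2(p)$ for every projection $p\in F$ automatically, with no control over the individual approximations needed; Lemma \ref{FinDimTechnical} then applies verbatim (the relevant corner of $B(H)$ has connected unitary group, so the partial isometry hypothesis is satisfied). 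You correctly sensed that ``the symmetry of the two factors is essential,'' but the actual mechanism is doubling and swapping the summands, not matching multiplicities of the approximants directly. As written, your argument does not go through.
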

\begin{proof} "Only if" is obvious.

"If":  Let $f: A\ast_F A \to B(H)$ be an embedding.  Let $f_i$ be the restriction of $f$ on the $i$-th copy of $A$, $i=1, 2$. We will use notation $f= f_1\ast f_2$.
Then $(f_1\oplus f_2)\ast (f_2\oplus f_1) = (f_1\ast f_2) \oplus (f_2\ast f_1)$ is also an embedding. By \cite[proof of Th. 3.2]{ExelLoring}, since $A$ is RFD, there is an increasing net of projections $P_{\alpha}$ $\ast$-strongly converging to $1_{B(H)}$ and nets $\sigma_{\alpha}^{(1)}, \sigma_{\alpha}^{(2)}$ of representations of $A$ living on $P_{\alpha}H$ such that
$$ \sigma_{\alpha}^{(i)}(a) \to f_i(a) \;\; {\ast}-\text{strongly},$$ for $i=1, 2$ and $a\in A$. In other words,
$f_1$ and $f_2$ lift to $\ast$-homomorphisms $\tilde f_1, \tilde f_2: A \to \mathcal D$ respectively.
Then  $f_1\oplus f_2$ and $f_2\oplus f_1$ lift to $\bar f_1 = \tilde f_1\oplus \tilde f_2$ and $\bar f_2 = \tilde f_2\oplus \tilde f_1$, and for any projection $p\in F$, $\bar f_1(p) \sim \bar f_2(p)$.
Since $\pi(\bar f_2(p)) B(H) \pi(\bar f_2(p))$ has connected unitary group, any unitary in it lifts. By Lemma \ref{FinDimTechnical} there is $U\in \mathcal D$ such that $\pi(U) =1$ and
 $$\bar f_2|_F = U^* \bar f_1|_F U.$$ Then  $U^* \bar f_1 U$ and $\bar f_2$ are lifts of $f_1\oplus f_2$ and $f_2\oplus f_1$ respectively and agree on $F$.  Thus $\left(U^* \bar f_1 U\right)\ast \bar f_2$ is a lift of the embedding $(f_1\oplus f_2)\ast (f_2\oplus f_1)$ and hence is an embedding.
\end{proof}

We will deduce the general result from the result above. For that we prove the following statement.

\begin{theorem}\label{EmbeddingAmalgamated} Let $A, B, D$ be unital C*-algebras and $C$ be a separable unital C*-algebra. Let $i_a: C \to A$, $i_B: C \to B$,  and $\phi_A: A \to D$, $\phi_D: B \to D$ be unital inclusions such that $\phi_A\circ i_A= \phi_B\circ i_B$. Then $A\ast_C B$ embeds into $D\ast_C D$.
\end{theorem}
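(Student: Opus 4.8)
The plan is to define $\Phi\colon A\ast_C B\to D\ast_C D$ by the universal property and then prove it is isometric by reversing, via a single completely positive map, every admissible pair of representations of $A\ast_C B$. First I would fix the amalgamation: set $j:=\phi_A\circ i_A=\phi_B\circ i_B\colon C\to D$ and form $D\ast_C D$ with respect to $j$ on both legs, with canonical unital inclusions $\iota_1,\iota_2\colon D\to D\ast_C D$. Defining $\Phi$ on $A$ by $\iota_1\circ\phi_A$ and on $B$ by $\iota_2\circ\phi_B$, the two prescriptions agree on $C$, since $\iota_1\circ\phi_A\circ i_A=\iota_1\circ j=\iota_2\circ j=\iota_2\circ\phi_B\circ i_B$; the universal property of the amalgamated free product therefore yields a unital $\ast$-homomorphism $\Phi$. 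As $\Phi$ is a $\ast$-homomorphism, injectivity is equivalent to being isometric, so it suffices to show $\|x\|\le\|\Phi(x)\|$ for all $x\in A\ast_C B$, where $\|\cdot\|$ is the full amalgamated free product norm.

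The heart of the argument is to reverse an arbitrary admissible representation. Recall that $\|x\|=\sup\|(\pi_A\ast_C\pi_B)(x)\|$, the supremum taken over all pairs of representations $\pi_A\colon A\to B(H)$ and $\pi_B\colon B\to B(H)$ on a common $H$ with $\pi_A\circ i_A=\pi_B\circ i_B$. Fixing such a pair, and using that $\phi_A,\phi_B$ are injective (so that $A$ and $B$ are unital C*-subalgebras of $D$), I would apply the Arveson extension theorem \cite{Arveson} to extend $\pi_A$ and $\pi_B$ to unital completely positive maps $\Psi_A,\Psi_B\colon D\to B(H)$. These maps coincide on $j(C)$, where they both equal the fixed representation $\pi_A\circ i_A=\pi_B\circ i_B$ of $C$; and since $C\subseteq A$ and $C\subseteq B$, the subalgebra $j(C)$ lies in the multiplicative domain of each. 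Boca's theorem on amalgamated free products of completely positive maps \cite{Boca} then provides a u.c.p. map $\Omega:=\Psi_A\ast_C\Psi_B\colon D\ast_C D\to B(H)$ with $\Omega\circ\iota_1=\Psi_A$ and $\Omega\circ\iota_2=\Psi_B$.

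It remains to identify $\Omega\circ\Phi$. By construction it restricts to $\Psi_A\circ\phi_A=\pi_A$ on $A$ and to $\Psi_B\circ\phi_B=\pi_B$ on $B$; since these restrictions are $\ast$-homomorphisms, both $A$ and $B$ lie in the multiplicative domain of the u.c.p. map $\Omega\circ\Phi$. That domain is a C*-subalgebra containing $A\cup B$, hence all of $A\ast_C B$, so $\Omega\circ\Phi$ is itself a $\ast$-homomorphism; agreeing with $\pi_A\ast_C\pi_B$ on the generators, it equals $\pi_A\ast_C\pi_B$. As $\Omega$ is contractive, $\|(\pi_A\ast_C\pi_B)(x)\|=\|\Omega(\Phi(x))\|\le\|\Phi(x)\|$, and taking the supremum over all admissible pairs gives $\|x\|\le\|\Phi(x)\|$, completing the proof that $\Phi$ is an embedding.

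I expect the main obstacle to be conceptual rather than computational: a representation of the subalgebra $A\subseteq D$ need not extend to a representation of $D$, so one cannot simply lift $\pi_A,\pi_B$ to $\ast$-homomorphisms of $D$ and amalgamate them directly. The device that overcomes this is to pass to \emph{completely positive} extensions through Arveson and to invoke Boca's completely positive free-product construction, recovering multiplicativity only at the very end via the multiplicative-domain argument. The points requiring care are the verification that $j(C)$ sits in the multiplicative domains of $\Psi_A$ and $\Psi_B$, so that Boca's theorem genuinely applies in the unital amalgamated setting, and the standard fact that a u.c.p. map restricting to a $\ast$-homomorphism on a generating family of subalgebras is globally multiplicative.
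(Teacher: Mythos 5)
Your proposal is correct, but it follows a genuinely different route from the paper's. Both proofs begin identically: define $\Phi=\phi_A\ast\phi_B$ by the universal property and extend representations of $A$ and $B$ to u.c.p.\ maps on $D$ via Arveson's extension theorem. The paper then dilates these extensions to $\ast$-homomorphisms $\sigma_A,\sigma_B$ of $D$ by Stinespring; since $\sigma_A\circ\phi_A$ and $\sigma_B\circ\phi_B$ no longer agree on $C$, it amplifies by countable multiplicity and invokes Voiculescu's theorem to conjugate the two restrictions to $C$ into agreement modulo the compacts, producing a $\ast$-homomorphism $\gamma\colon D\ast_C D\to Q(H^{\oplus\infty})$; this forces the initial embedding $\alpha$ of $A\ast_C B$ to be chosen so that $q\circ\alpha$ is still injective, and separability of $C$ enters through Voiculescu. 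You instead stay entirely at the level of u.c.p.\ maps, amalgamating the Arveson extensions directly via Boca's theorem \cite{Boca} and recovering multiplicativity of $\Omega\circ\Phi$ only at the end by the multiplicative-domain argument. The hypotheses of Boca's theorem are indeed satisfied: since $\Psi_A$ restricts to a $\ast$-homomorphism on the C*-subalgebra $\phi_A(A)\supseteq j(C)$, Choi's multiplicative-domain theorem places $j(C)$ in the multiplicative domain of $\Psi_A$ automatically, and likewise for $\Psi_B$. Your route avoids Voiculescu's theorem and the detour through the Calkin algebra, tests $\Phi$ against every admissible pair and so shows directly that it is isometric, and needs no separability of $C$ --- a hypothesis the paper can only remove in a subsequent remark by appealing to the non-separable version of Voiculescu's theorem. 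What the paper's argument buys in exchange is independence from Boca's free-product construction for completely positive maps.
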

\begin{proof} Since $\phi_A\circ i_A= \phi_B\circ i_B$, the $\ast$-homomorphism $\phi_A \ast \phi_B: A \ast_C B \to D\ast_C D$ that sends $A$ to the first copy of $D$ via $\phi_A$ and $B$ to the second copy of $D$ via $\phi_B$, is well-defined. We will prove that it is injective.

Let $\alpha: A \ast_C B \to B(H)$ be an embedding such that its composition with the quotient map $q: B(H) \to Q(H)$ is still injective (for instance take an embedding $j$ and let $\alpha := j^{\oplus \infty}$). Let $\alpha_A = \alpha|_A, \alpha_B = \alpha|_B$. We have
$$\alpha_A\circ i_A = \alpha_B\circ i_B.$$ By Arveson Extension Theorem, $\alpha_A$ extends to a ccp map $\bar\alpha_A$ on $D$. By Steinspring Dilation Theorem, $\bar \alpha_A$ can be dilated to a $\ast$-homomorphism $\sigma_A$.

$$\begin{tikzcd} F \arrow{r}{i_A} & A\arrow{r}{\alpha_A}\arrow[swap]{d}{\phi_A} & B(H) \\
 & D \arrow[dashed, swap]{r}{\sigma_A} \arrow[dashed]{ur}{\bar\alpha_A} & B(H\oplus H') \arrow[swap]{u}{(.)_{11} }
\end{tikzcd}$$

\noindent Note that since $\sigma_A\circ \phi_A \circ i_A$ is a $\ast$-homomorphism and $\left(\sigma_A\circ \phi_A \circ i_A\right)_{11} = \bar\alpha_A\circ \phi_A \circ i_A = \alpha_A\circ i_A$ is a $\ast$-homomorphism, $\left(\sigma_A\circ \phi_A \circ i_A\right)_{22}$ is a $\ast$-homomorphism as well.

We obtain $\bar \alpha_B, \sigma_B$ in the same way.

Consider the representations $\left(\sigma_A\circ \phi_A\right)^{\oplus\infty}: A \to B(H^{\oplus\infty}),$
$\left(\sigma_B\circ \phi_B\right)^{\oplus\infty}: B \to B(H^{\oplus\infty})$.  When restricted to $C$ these maps do not agree (yet) but
\begin{multline*} \left(\sigma_A\circ\phi_A\circ i_A\right)^{\oplus\infty} = \left(\sigma_A\circ\phi_A\circ i_A\right) \oplus \left(\sigma_A\circ\phi_A\circ i_A\right)^{\oplus\infty}\\ = \alpha_A\circ i_A \oplus \left(\sigma_A\circ\phi_A\circ i_A\right)_{22}
\oplus \left(\sigma_A\circ\phi_A\circ i_A\right)^{\oplus\infty}, \end{multline*}
\begin{multline*} \left(\sigma_B\circ\phi_B\circ i_B\right)^{\oplus\infty}  = \alpha_B\circ i_B \oplus \left(\sigma_B\circ\phi_B\circ i_B\right)_{22}
\oplus \left(\sigma_B\circ\phi_B\circ i_B\right)^{\oplus\infty}. \end{multline*}
The $(\;)_{11}$-components agree, and for the "remainders", $\left(\sigma_A\circ\phi_A\circ i_A\right)_{22}
\oplus \left(\sigma_A\circ\phi_A\circ i_A\right)^{\oplus\infty}$ and $ \left(\sigma_B\circ\phi_B\circ i_B\right)_{22}
\oplus \left(\sigma_B\circ\phi_B\circ i_B\right)^{\oplus\infty}$, we have
\begin{multline*} rank \left(\left(\sigma_A\circ\phi_A\circ i_A\right)_{22}
\oplus \left(\sigma_A\circ\phi_A\circ i_A\right)^{\oplus\infty}\right)(c) = \infty \\
=rank \left(\left(\sigma_B\circ\phi_B\circ i_B\right)_{22}
\oplus \left(\sigma_B\circ\phi_B\circ i_B\right)^{\oplus\infty}\right)(c),\end{multline*} for any $0\neq c\in C$. Therefore, by Voiculescu's theorem, there exists unitary $u\in B(H^{\oplus\infty})$ such that
\begin{multline*} \left(\sigma_A\circ\phi_A\circ i_A\right)_{22}
\oplus \left(\sigma_A\circ\phi_A\circ i_A\right)^{\oplus\infty} =  u^* \left(\left(\sigma_B\circ\phi_B\circ i_B\right)_{22}
\oplus \left(\sigma_B\circ\phi_B\circ i_B\right)^{\oplus\infty}\right) u \\ \mod K(H).\end{multline*}
Then the $\ast$-homomorphisms $q\circ \left((1_{B(H)}\oplus u)\sigma_A^{\oplus\infty}(1_{B(H)}\oplus u)^*\right)$
and $q\circ \sigma_B^{\oplus\infty}$ agree on $\phi_A\circ i_A(C)$. Therefore
$$\gamma:=  \left(q\circ \left((1_{B(H)}\oplus u)\sigma_A^{\oplus\infty}(1_{B(H)}\oplus u)^*\right)\right)\ast \left(q\circ \sigma_B^{\oplus\infty}\right): D\ast_C D \to Q(H^{\oplus\infty})$$
is well-defined and the diagram

$$\begin{tikzcd} A\ast_C B \arrow{r}{\alpha} \arrow{d}{\phi_A\ast \phi_B} &  B(H) \arrow{r}{q} &Q(H) \\
  D\ast_C D \arrow{r}{\gamma} & B((H\oplus H')^{\oplus\infty}) \arrow{r}{q} & Q((H\oplus H')^{\oplus\infty})\arrow[swap]{u}{((.)_{11})_{11} }
\end{tikzcd}$$
is commutative. Since $q\circ\alpha$ is injective, so must be $\phi_A\ast\phi_B$.
\end{proof}

\begin{remark} The assumption of separability of $C$ in Theorem \ref{EmbeddingAmalgamated} can be removed. For that we replace
$(\sigma_A\phi_Ai_A)^{\oplus\infty}$ and  $(\sigma_B\phi_Bi_B)^{\oplus\infty}$  by $(\sigma_A\phi_Ai_A)^{\oplus\infty} \oplus (\sigma_B\phi_Bi_B)^{\oplus\infty}$ and $(\sigma_B\phi_Bi_B)^{\oplus\infty} \oplus (\sigma_A\phi_Ai_A)^{\oplus\infty}$ respectively to ensure that ranks of the "remainders" are infinite of the same cardinality, and then use the non-separable version of Voiculescu's theorem \cite{HadwinNonseparableVoiculescu}.
\end{remark}

\begin{theorem}\label{RFDunital}(Li and Shen \cite{LiShen}) Let $A$ and $B$ be unital C*-algebras, $F$ a finite-dimensional C*-algebra and $i_A: F \to A$, $i_B: F \to B$ unital inclusions. Then the corresponding unital amalgamated free product $A\ast_F B$ is RFD if and only if there exist unital inclusions $\phi_A: A\to \prod M_n, \phi_B: B \to \prod M_n$ such that $\phi_A\circ i_A = \phi_B\circ i_B$.
\end{theorem}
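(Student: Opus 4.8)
The plan is to prove the two implications of the equivalence separately, using as the main engines the embedding result Theorem~\ref{EmbeddingAmalgamated} together with its special case Corollary~\ref{RFD}. Throughout I write $\iota_A : A \to A\ast_F B$ and $\iota_B : B \to A\ast_F B$ for the canonical unital inclusions, so that by the defining universal property of the amalgamated free product $\iota_A\circ i_A = \iota_B\circ i_B$.

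For the forward implication I would argue as follows. Suppose $A\ast_F B$ is RFD. Then by definition it admits an embedding $\Phi : A\ast_F B \to \prod M_n$ into a product of matrix algebras, given by a separating family of finite-dimensional representations. Setting $\phi_A = \Phi\circ\iota_A$ and $\phi_B = \Phi\circ\iota_B$, these are unital $\ast$-homomorphisms into $\prod M_n$, and they are injective since $\iota_A,\iota_B$ are, this being the standard fact that $A$ and $B$ embed into their amalgamated free product, already used implicitly in the ``only if'' part of Corollary~\ref{RFD}. Composing the identity $\iota_A\circ i_A = \iota_B\circ i_B$ with $\Phi$ then gives $\phi_A\circ i_A = \phi_B\circ i_B$, which is exactly what is required.

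For the reverse implication, suppose we are handed unital inclusions $\phi_A : A \to D$ and $\phi_B : B \to D$, where $D := \prod M_n$, satisfying $\phi_A\circ i_A = \phi_B\circ i_B$. Since $F$ is finite-dimensional, hence separable, Theorem~\ref{EmbeddingAmalgamated} applies with $C = F$ and this choice of $D$, producing an embedding $A\ast_F B \hookrightarrow D\ast_F D$. Because $D = \prod M_n$ is visibly RFD (the coordinate projections form a separating family of finite-dimensional representations), Corollary~\ref{RFD} shows that $D\ast_F D$ is RFD. Finally, any C*-subalgebra of an RFD C*-algebra is again RFD, by restricting a separating family of finite-dimensional representations, so $A\ast_F B$ is RFD.

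All of the real content sits in the reverse implication, and there it is borrowed wholesale from Theorem~\ref{EmbeddingAmalgamated} and Corollary~\ref{RFD}; consequently the hard part is not in this proof itself but in those two prior results. What remains to verify here is only bookkeeping: (a) that Corollary~\ref{RFD} may legitimately be applied to the non-separable algebra $D = \prod M_n$, which is fine because its hypothesis asks only for unitality; and (b) the injectivity of the canonical inclusions $\iota_A,\iota_B$ invoked in the forward direction. I expect neither of these to cause genuine difficulty.
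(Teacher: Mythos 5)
Your proof is correct and follows essentially the same route as the paper: the forward direction by restricting a separating embedding along the canonical inclusions, and the reverse direction by combining Theorem~\ref{EmbeddingAmalgamated} (with $C=F$, $D=\prod M_n$) with Corollary~\ref{RFD} and the fact that RFD passes to subalgebras. The two bookkeeping points you flag are indeed unproblematic, since Corollary~\ref{RFD} is stated (and proved, via nets) without separability assumptions.
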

\begin{proof} The "only if" is obvious.

\noindent "If": By Theorem \ref{EmbeddingAmalgamated}, $A\ast_F B$ embeds into $\prod M_n\ast_F \prod M_n$ which is RFD by Theorem \ref{RFD}.
\end{proof}

Using Lemma \ref{ForcedUnitization} we also obtain the non-unital case.

\begin{theorem}\label{RFDnonunital} Let $A$ and $B$ be C*-algebras, $F$ a finite-dimensional C*-algebra and $i_A: F \to A$, $i_B: F \to B$ inclusions. Then the corresponding amalgamated free product $A\ast_F B$ is RFD if and only if there exist inclusions $\phi_A: A\to \prod M_n, \phi_B: B \to \prod M_n$ such that $\phi_A\circ i_A = \phi_B\circ i_B$.
\end{theorem}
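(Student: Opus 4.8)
The plan is to reduce the statement to the unital Theorem \ref{RFDunital} by passing to forced unitizations and invoking Lemma \ref{ForcedUnitization}. The "only if" direction is immediate: an embedding of the RFD algebra $A\ast_F B$ into some $\prod M_n$ restricts on the canonical copies of $A$ and $B$ to inclusions $\phi_A,\phi_B$ that manifestly satisfy $\phi_A\circ i_A=\phi_B\circ i_B$. For the converse I would use two elementary facts about the forced unitization: first, a C*-algebra is RFD if and only if its forced unitization is (the finite-dimensional representations of $R^+$ being those of $R$ with the adjoined unit sent to the identity, together with the canonical character annihilating $R$); second, $(-)^+$ is a functor on C*-algebras that preserves both injectivity and unitality of $\ast$-homomorphisms. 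Combining the RFD criterion with Lemma \ref{ForcedUnitization}, which identifies $(A\ast_F B)^+$ with $A^+\ast_{F^+}B^+$, it suffices to prove that the unital amalgamated free product $A^+\ast_{F^+}B^+$ is RFD; note $F^+$ is again finite dimensional and $i_A^+\colon F^+\to A^+$, $i_B^+\colon F^+\to B^+$ are unital inclusions, so Theorem \ref{RFDunital} is the right tool.

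To invoke Theorem \ref{RFDunital} I must manufacture unital inclusions of $A^+$ and $B^+$ into a product of matrix algebras that agree on $F^+$. Starting from the hypothesized $\phi_A,\phi_B\colon A,B\to\prod M_n$ with $\phi_A\circ i_A=\phi_B\circ i_B$, I would simply apply the functor $(-)^+$. Functoriality turns the commuting square into $\phi_A^+\circ i_A^+=\phi_B^+\circ i_B^+$, and $\phi_A^+,\phi_B^+$ are unital and injective by the preservation properties above. Their common target is $(\prod M_n)^+\cong(\prod M_n)\oplus\mathbb C$, which is again a product of matrix algebras once one appends the factor $\mathbb C=M_1$. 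Thus $\phi_A^+,\phi_B^+$ are exactly the unital inclusions required by Theorem \ref{RFDunital}, whence $A^+\ast_{F^+}B^+$ is RFD.

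Finally, unwinding the reduction gives the result: by Lemma \ref{ForcedUnitization} the algebra $(A\ast_F B)^+=A^+\ast_{F^+}B^+$ is RFD, and therefore $A\ast_F B$ is RFD by the unitization criterion for RFD-ness.

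I expect no serious obstacle here, since the heavy lifting is already done in Theorems \ref{RFDunital} and \ref{EmbeddingAmalgamated}; the only points requiring care are the two bookkeeping facts about forced unitization—that it is an injectivity- and unitality-preserving functor and that RFD-ness is insensitive to it—together with the identification $(\prod M_n)^+\cong(\prod M_n)\oplus\mathbb C$ that keeps the target within the class of products of matrix algebras. Should one prefer to avoid the functorial language, the same conclusion follows by unitizing $\phi_A,\phi_B$ blockwise (sending each adjoined unit to the identity of the corresponding matrix block, which automatically preserves agreement on $F$) and adjoining a single one-dimensional character to restore injectivity.
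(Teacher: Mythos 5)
Your proposal is correct and follows exactly the route the paper intends: the paper derives Theorem \ref{RFDnonunital} from Theorem \ref{RFDunital} via Lemma \ref{ForcedUnitization} in a single sentence, and you have simply supplied the bookkeeping details (RFD-ness is insensitive to forced unitization, $(-)^+$ preserves injectivity and agreement on $F$, and $(\prod M_n)^+\cong(\prod M_n)\oplus\mathbb C$ stays within products of matrix algebras) that the authors leave implicit. No gaps.
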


\section{Soft tori have the LP}

Fix $\epsilon \ge 0$. Recall  that the {\it soft torus} corresponding to  $\epsilon$ is the universal C*-algebra
$$C(\mathbb T^2)_{\epsilon} = C^*\langle u, v\;|\; u \; \text{and} \; v \; \text{are unitaries and} \; \|[u, v]\| \le \epsilon\rangle.$$

When $\epsilon = 0$ one has $C(\mathbb T^2)_{\epsilon} = C(\mathbb T^2)$  the C*-algebra of all continuous functions on the torus, and when $\epsilon \ge 2$ one has
$C(\mathbb T^2)_{\epsilon} =  C^*(F_2)$ the full C*-algebra  of the free group with two generators.

\medskip

\begin{lemma}\label{commutator} If $w$ is unitary and $x$ is arbitrary, then $\|[w, x]\| = \|[w^*, x]\|$.
\end{lemma}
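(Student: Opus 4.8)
The plan is to exploit the fact that conjugation by the unitary $w$ is an isometry which interchanges the two commutators up to a sign. The key algebraic step I would record first is the identity
$$w\,[w^*,x]\,w = w(w^*x - xw^*)w = (ww^*)xw - wx(w^*w) = xw - wx = -[w,x],$$
where the crucial simplifications use that $w$ is unitary, i.e. $ww^* = w^*w = 1$.

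With this in hand, the rest is immediate. Since $w$ is unitary, both left multiplication and right multiplication by $w$ are isometric, so $\|waw\| = \|a\|$ for every element $a$. Applying this with $a = [w^*,x]$ and using the identity above yields
$$\|[w,x]\| = \|{-}[w,x]\| = \bigl\|w\,[w^*,x]\,w\bigr\| = \|[w^*,x]\|,$$
which is exactly the asserted equality.

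There is no real obstacle here: the entire content is the one-line computation that conjugating $[w^*,x]$ by $w$ produces $-[w,x]$. The only point worth flagging is purely a matter of bookkeeping, namely that the statement is understood in a unital context (or inside the unitization or a multiplier algebra) so that $w^* = w^{-1}$ and the relations $ww^* = w^*w = 1$ are available; once that is fixed the argument is entirely self-contained and requires nothing beyond isometry of unitary multiplication.
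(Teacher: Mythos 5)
Your proof is correct and follows exactly the same route as the paper, which simply cites the identity $[w,x] = -w[w^*,x]w$; you have merely written out the one-line verification of that identity and the isometry argument that the paper leaves implicit.
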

\begin{proof} Because $[w, x] = - w[w^*, x]w$.
\end{proof}

\medskip

Let $f: C(\mathbb T^2)_{\epsilon} \to B$ be a $\ast$-homomorphism. The following construction will be used several times.

\medskip

 \begin{construction}\label{construction} Let $U = f(u); V = f(v)$. Using Lemma \ref{commutator} we can define a $\ast$-homomorphism  $g: C(\mathbb T^2)_\epsilon \to B$ by $$g(u) =-U^*, \;\;g(v) = V.$$ For each $t\in [0,1]$ let
 $$U_t = \left(\begin{array}{cc} tU & \sqrt{1-t^2}\;\mathbb 1\\\sqrt{1-t^2}\;\mathbb 1 & -tU^*\end{array}\right),\;\;
 V_t = \left(\begin{array}{cc} V&0\\0&V\end{array}\right). $$ Using Lemma \ref{commutator} we obtain
 \begin{equation}\label{SmallerCommutator}\|[U_t, V_t]\| = \left\|\left(\begin{array}{cc} t[U, V]&0\\0&t[V, U^*]\end{array}\right)\right\|\le t\epsilon.\end{equation}
  Let $f_t: C(\mathbb T^2)_\epsilon \to M_2(B)$ be the  $\ast$-homomorphism defined by the pair $U_t, V_t$. We have
  \begin{equation}\label{eq0}  f_t \to f\oplus g \;\text{ pointwisely when}\; t\to 1.\end{equation}
\end{construction}

  \medskip
\begin{lemma}\label{FormerClaim} Let $f: C(\mathbb T^2)_\epsilon \to B/I$ be  a $\ast$-homomorphism and let $f_t$ be defined as above. If $f(v)$  lifts to a unitary in $B$, then for each $t < 1$, $f_t$ lifts to a $\ast$-homomorphism.
\end{lemma}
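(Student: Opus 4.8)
The plan is to unravel what it means to lift the homomorphism $f_t$ and then build the lift by hand, dilating a contractive lift of $f(u)$ to a unitary while keeping its commutator with a unitary lift of $f(v)$ under control. Since $C(\mathbb T^2)_\epsilon$ is the universal C*-algebra on two unitaries with $\|[u,v]\|\le\epsilon$, a $\ast$-homomorphism $\tilde f_t\colon C(\mathbb T^2)_\epsilon\to M_2(B)$ lifting $f_t$ is the same datum as a pair of unitaries $W,Z\in M_2(B)$ with $\|[W,Z]\|\le\epsilon$ and $q(W)=U_t$, $q(Z)=V_t$, where $q\colon M_2(B)\to M_2(B/I)$ is the quotient map. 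By hypothesis $f(v)$ lifts to a unitary $\hat V\in B$, so I set $Z=\mathrm{diag}(\hat V,\hat V)$, a unitary lift of $V_t$. It then remains to produce a unitary $W$ lifting $U_t$ with $\|[W,Z]\|\le\epsilon$.

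For this I would first note that $U_t$ always admits an explicit unitary lift. Let $b$ be a contractive lift of $U=f(u)$ and set
\[ W=\begin{pmatrix} tb & (1-t^2bb^*)^{1/2}\\ (1-t^2b^*b)^{1/2} & -tb^*\end{pmatrix}. \]
Unitarity of $W$ follows from the intertwining identity $b\,h(b^*b)=h(bb^*)\,b$ (valid for any continuous $h$, here $h(x)=(1-t^2x)^{1/2}$), exactly as in the verification that $U_t$ is unitary in Construction \ref{construction}; and since $q(b)=U$ is unitary one checks directly that $q(W)=U_t$. This works for every $t\le 1$, so the restriction $t<1$ plays no role in the existence of a unitary lift and will enter only through the commutator estimate.

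The heart of the matter, and the step I expect to be the main obstacle, is arranging $\|[W,Z]\|\le\epsilon$: an arbitrary contractive lift $b$ gives no control whatsoever on $[b,\hat V]$. I would therefore choose $b$ carefully. Fix any contractive lift $b_0$ of $U$ and a quasicentral approximate unit $(e_\lambda)$ of $I$ that asymptotically commutes with $\hat V$ and $b_0$, and put $b=b_\lambda=(1-e_\lambda)^{1/2}b_0(1-e_\lambda)^{1/2}$, still a contractive lift of $U$. Writing $[W,Z]$ entrywise, its diagonal entries are $\pm t[b,\hat V]$ and its off-diagonal entries are the commutators of the defect operators $(1-t^2bb^*)^{1/2}$ and $(1-t^2b^*b)^{1/2}$ with $\hat V$. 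For the off-diagonal part, $q(bb^*)=q(b^*b)=1$ is central, so $\|[bb^*,\hat V]\|,\|[b^*b,\hat V]\|\to 0$ along $\lambda$, and since $x\mapsto(1-t^2x)^{1/2}$ is continuous on $[0,1]$ this forces the off-diagonal commutators to $0$. For the diagonal part, the symmetric-sandwich estimate $\limsup_\lambda\|(1-e_\lambda)^{1/2}m(1-e_\lambda)^{1/2}\|\le\|q(m)\|$, applied to $m=[b_0,\hat V]$ together with $[(1-e_\lambda)^{1/2},\hat V]\to 0$, yields $\limsup_\lambda\|[b_\lambda,\hat V]\|\le\|[U,V]\|\le\epsilon$, and by Lemma \ref{commutator} the lower-right entry obeys the same bound. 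Combining these gives $\limsup_\lambda\|[W,Z]\|\le t\epsilon$. Because $t<1$ this is strictly below $\epsilon$, which is precisely the slack needed to absorb the approximate-unit error; choosing $\lambda$ large enough yields $\|[W,Z]\|\le\epsilon$, so $(W,Z)$ defines the required $\ast$-homomorphic lift of $f_t$. The delicate point throughout is that the single quasicentral approximate unit must simultaneously push the diagonal commutator down to its quotient value and annihilate the off-diagonal defect-operator commutators, with the strict inequality $t<1$ being exactly what makes the final bound $\le\epsilon$ attainable.
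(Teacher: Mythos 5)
Your proposal follows essentially the same route as the paper: lift $V$ to a unitary, cut down a contractive lift of $U$ by a quasicentral approximate unit, form the explicit $2\times 2$ unitary dilation, and estimate the commutator entrywise, with the diagonal contributing (asymptotically) $t\epsilon$ and the off-diagonal contributing $o(1)$, the strict inequality $t<1$ absorbing the error. The only genuine divergences are cosmetic: you use the symmetric cutdown $b=(1-e_\lambda)^{1/2}b_0(1-e_\lambda)^{1/2}$ where the paper uses the one-sided $Y=(1-e_\lambda)X$, and you control the square-root commutators by a qualitative polynomial-approximation argument where the paper uses Pedersen's quantitative inequality $\|[A^{1/2},B]\|\le\tfrac54\|[A,B]\|^{1/2}$ to get explicit $\lambda$-independent bounds. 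Both choices work.

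There is, however, one step you assert rather than prove, and it is precisely the delicate one: the claim that $\|[b_\lambda b_\lambda^*,\hat V]\|\to 0$ (and likewise for $b_\lambda^*b_\lambda$) ``because $q(bb^*)=1$ is central.'' Centrality of $q(bb^*)$ only tells you that $[b_\lambda b_\lambda^*,\hat V]\in I$ for each $\lambda$; since this element \emph{varies with} $\lambda$, you cannot conclude from the approximate-unit property that its norm tends to zero. With your symmetric cutdown the verification is genuinely nontrivial: $b_\lambda b_\lambda^*=(1-e_\lambda)^{1/2}\,b_0(1-e_\lambda)b_0^*\,(1-e_\lambda)^{1/2}$ has the approximate unit appearing in the middle as well, and after commuting everything using quasicentrality one is left not only with the harmless term $(1-e_\lambda)^{1/2}[b_0b_0^*,\hat V](1-e_\lambda)^{1/2}$ (which does go to zero, $[b_0b_0^*,\hat V]$ being a fixed element of $I$) but also with a term of the form $[\,b_0\,e_\lambda(1-e_\lambda)\,b_0^*,\hat V\,]$; since $\|e_\lambda(1-e_\lambda)\|$ need not be small, one must push $g_\lambda:=e_\lambda(1-e_\lambda)$ to the outside by quasicentrality and then use that $\|y\,g_\lambda\|\le\|y(1-e_\lambda)\|\to 0$ for the fixed $y=[b_0b_0^*,\hat V]\in I$. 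The conclusion is true, but this computation is the actual content of the step. The paper's one-sided choice $Y=(1-e_\lambda)X$ avoids the issue for $YY^*=(1-e_\lambda)XX^*(1-e_\lambda)$, which is a \emph{fixed} element sandwiched by $(1-e_\lambda)$, so that $[YY^*,\tilde V]\approx(1-e_\lambda)[XX^*,\tilde V](1-e_\lambda)\to 0$ immediately (and similarly for $Y^*Y$ after one quasicentral commutation). Either switch to the one-sided cutdown or supply the $g_\lambda$ computation; as written, the off-diagonal estimate is not justified.
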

\begin{proof}
    Fix  $t<1$. Let $\tilde V$ be a unitary lift of $V = f(v)$ and let $X$ be any contractive lift of $U = f(u)$.
  Let $\{e_{\lambda}\}$ be a quasicentral approximate unit for $I \triangleleft B$.
  Choose $t'$ such that $t<t'<1$.
  Since $$\pi([XX^*, \tilde V]) = \pi([X^*X, \tilde V]) =  [\mathbb 1, V] =0, \;\text{and}\; \|\pi([X, \tilde V])\| \le \epsilon,$$ there is $\lambda$ such that for $Y= (1-e_{\lambda})X$ we have

  \begin{equation}\label{eq2} \|[YY^*, \tilde V]\|\le \left(\frac{4(1-t')\epsilon}{5}\right)^2, \end{equation}

  \begin{equation}\label{eq3} \|[Y^*Y, \tilde V]\|\le \left(\frac{4(1-t')\epsilon}{5}\right)^2, \end{equation}

    \begin{equation}\label{eq1} \|[Y, \tilde V]\|\le \frac{\epsilon t'}{t} \;\text{and}\; \|[Y^*, \tilde V]\|\le \frac{\epsilon t'}{t}. \end{equation}

  Let

  $$\tilde U_t = \left(\begin{array}{cc} tY&(1-t^2YY^*)^{1/2} \\ (1-t^2Y^*Y)^{1/2} & -tY^*\end{array}\right), \;\;\; \tilde V_t
  = \left(\begin{array}{cc} \tilde V & 0\\0 & \tilde V\end{array}\right).$$

  These are unitary lifts of $U_t, V_t$ respectively. We have

  $$ [\tilde U_t, \tilde V_t] = \left(\begin{array}{cc} t[Y, \tilde V] & [ \left(1- t^2 YY^*\right)^{1/2}, \tilde V] \\
  \left[\left(1- t^2 Y^*Y\right)^{1/2}, \tilde V\right] &
  -t[Y^*, \tilde V] \end{array}\right).$$

  By (\ref{eq2}) and Pedersen's inequality (which says that $\|[A^{1/2}, B]\|\le \frac{5}{4}\|[A, B]\|^{1/2}$ when $0\le A \le \mathbb 1$ ) we have

  \begin{equation}\label{eq4} \left \|[\left(1-t^2YY^*\right)^{1/2}, \tilde V]\right\|\le \frac{5}{4}\left \|t^2[YY^*, \tilde V]\right \|^{1/2} \le \frac{5}{4}
  \left\|[YY^*, \tilde V]\right\|^{1/2} \le (1-t')\epsilon \end{equation}
  and the same for $\left\|[\left(1-t^2Y^*Y\right)^{1/2}, \tilde V]\right\|$. By (\ref{eq1}) and (\ref{eq4}) we obtain

  \begin{multline*}\|[\tilde U_t, \tilde V_t]\|\le  \left\|\left(\begin{array}{cc} t[Y, \tilde V] & 0\\
  0 &
  -t[Y^*, \tilde V] \end{array}\right)\right\| + \\ \left\|\left(\begin{array}{cc}  0& [ \left(1- t^2 YY^*\right)^{1/2}, \tilde V] \\
  \left[\left(1- t^2 YY^*\right)^{1/2}, \tilde V\right] & 0
   \end{array}\right) \right\| \le t'\epsilon + (1-t')\epsilon = \epsilon.\end{multline*}

  Hence $\tilde U_t, \tilde V_t$ define a $\ast$-homomorphism $\tilde f_t$ which is a lift of $f_t$.

  \end{proof}

\begin{remark}\label{RemarkCommutators} The proof of the Lemma \ref{FormerClaim} shows that for a  finite collection of unitaries $U_i, V_i$ in $B/I$ with the norm of commutators $[U_i, V_j]$ strictly less than $\epsilon$,   given a unitary lift $\tilde V_i$ of $V_i$ for each $i$,  one can construct a unitary lift
$\tilde U_{t, i}$ of $\left(\begin{array}{cc} tU_i & \sqrt{1-t^2}\;\mathbb 1\\\sqrt{1-t^2}\;\mathbb 1 & -tU_i^*\end{array}\right)$  so that
$\left\| \left[\tilde U_{t, i},  \left(\begin{array}{cc} V_i&0\\0&V_i\end{array}\right)\right]\right\|\le \epsilon, $ for all $i, j$.

\end{remark}

\medskip

  \begin{theorem}\label{SoftTorusthe LP} Soft torus has the LP.
 \end{theorem}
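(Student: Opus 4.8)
\emph{The plan is to verify condition (iii) of Corollary \ref{CharacterizationLP}.} So I fix a $\sigma$-unital C*-algebra $B$, an ideal $I$, and a $\ast$-homomorphism $f:C(\mathbb T^2)_\epsilon\to B/I$, and I must produce a $\ast$-homomorphism $g:C(\mathbb T^2)_\epsilon\to M(B/I\otimes K)$ such that $f\oplus g$ lifts to a ccp map into $M(B\otimes K)$. By the Noncommutative Tietze theorem the surjection $B\otimes K\to B/I\otimes K$ extends to a surjection $\pi:M(B\otimes K)\to M(B/I\otimes K)$, and I will work throughout with this quotient. The whole difficulty is that Lemma \ref{FormerClaim} requires the image of $v$ to lift to a \emph{unitary}, while $f(v)$ need not lift to a unitary in $B$; the role of passing to the multiplier algebra is exactly to repair this.

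First I enlarge $f$ to a unital homomorphism into $M(B/I\otimes K)$. Embed $B/I$ as the $(1,1)$-corner $b\mapsto b\otimes e_{11}$, so $f(1)=p\otimes e_{11}$ and $1-f(1)\ge 1\otimes e_{22}\sim 1$; hence there is an isometry $s$ with $ss^*=1-f(1)$ (this uses that $M(B/I\otimes K)$ is properly infinite). Taking a faithful unital representation $\sigma:C(\mathbb T^2)_\epsilon\to B(H)$ and $r=\sigma^{(\infty)}$, conjugate it onto the complementary corner by $r'(x)=s\,r(x)\,s^*$. Then $\hat f:=f\oplus r'$ is a \emph{unital} $\ast$-homomorphism into $M(B/I\otimes K)$, so $\hat V:=\hat f(v)$ is a genuine unitary of $M(B/I\otimes K)$.

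Now I exploit that the unitary group of $M(B/I\otimes K)$ is connected (Mingo \cite{Mingo}), which forces $\hat V$ to lift to a unitary in $M(B\otimes K)$. Apply Construction \ref{construction} to $\hat f$: with $\hat U=\hat f(u)$, set $\hat g(u)=-\hat U^*$, $\hat g(v)=\hat V$ (a $\ast$-homomorphism by Lemma \ref{commutator}), obtaining $\ast$-homomorphisms $\hat f_t$ into $M_2(M(B/I\otimes K))$ with $\hat f_t\to\hat f\oplus\hat g$ pointwise as $t\to 1$ by (\ref{eq0}). Since $\hat V$ lifts to a unitary, Lemma \ref{FormerClaim} applied to $\pi$ shows that for every $t<1$ the homomorphism $\hat f_t$ lifts to a $\ast$-homomorphism, in particular to a ccp map, into $M_2(M(B\otimes K))\cong M(B\otimes K)$ (using $K\otimes M_2\cong K$).

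It remains to pass to the limit $t\to 1$. By Arveson's theorem \cite{Arveson} the set of ccp maps $C(\mathbb T^2)_\epsilon\to M(B/I\otimes K)$ that lift to ccp maps into $M(B\otimes K)$ is closed in the point-norm topology; each $\hat f_t$ with $t<1$ lies in this set and $\hat f_t\to\hat f\oplus\hat g$, so $\hat f\oplus\hat g$ itself lifts to a ccp map. Finally $\hat f\oplus\hat g=f\oplus(r'\oplus\hat g)$, so with $g:=r'\oplus\hat g$, under the identification $M_2(M(B/I\otimes K))\cong M(B/I\otimes K)$ the map $f\oplus g$ lifts to a ccp map into $M(B\otimes K)$, which is condition (iii) of Corollary \ref{CharacterizationLP}; hence $C(\mathbb T^2)_\epsilon$ has the LP. I expect the main obstacle to be purely the unitary-lifting issue noted above, resolved by the move into $M(B\otimes K)$ plus connectedness of its unitary group; after that, the interpolation of Construction \ref{construction} together with Arveson's closedness theorem mechanically bridges the gap between $t<1$ and $t=1$.
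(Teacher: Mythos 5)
Your proof is correct and follows essentially the same route as the paper: Construction \ref{construction} plus Lemma \ref{FormerClaim}, Mingo's connectedness of the unitary group of $M(B/I\otimes K)$ to get the unitary lift of the image of $v$, Arveson's closedness theorem to pass to $t=1$, and Corollary \ref{CharacterizationLP}. Your preliminary step of making $f$ unital via the isometry $s$ is a small extra precaution (the paper simply regards $f(v)$ as sitting in $M(B/I\otimes K)$), but it does not change the argument.
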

 \begin{proof} Let  $f: C(\mathbb T^2)_\epsilon \to  M(B/I \otimes K)$ be a $\ast$-homomorphism.  Let $U = f(u), V=f(v)$. By \cite{Mingo}, $V$ (and $U$) is in the connected component of $1$ in $M(B/I \otimes K)$ and hence lifts to a unitary in $M(B \otimes K)$. Let $g$ be as in Construction \ref{construction}.
  By Lemma \ref{FormerClaim} and  (\ref{eq0}),   $f\oplus g$ is a pointwise limit of liftable $\ast$-homomorphisms $f_t$ as $t\to 1$. By Arveson's result that a pointwise limit of liftable cp maps is liftable, we conclude that $f\oplus g$ lifts to a cp map. By Theorem \ref{CharacterizationLP}, $C(\mathbb T^2)_\epsilon$ has the LP.
  \end{proof}

\begin{theorem}\label{FnxFn}  For any $n\in \mathbb N \bigcup \{\infty\}$, $C^*(F_n\times F_n)$ is an inductive limit  of C*-algebras with the LP.
\end{theorem}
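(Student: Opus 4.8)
The plan is to realise $C^*(F_n\times F_n)$ as an inductive limit, with surjective connecting maps, of higher-dimensional soft tori, each of which has the LP by the argument of Theorem \ref{SoftTorusthe LP}. For finite $n$ and $\epsilon\ge 0$ set
$$A_{n,\epsilon}=C^*\langle a_1,\dots,a_n,b_1,\dots,b_n\mid a_i,b_j\ \text{unitaries},\ \|[a_i,b_j]\|\le\epsilon\ \text{for all}\ i,j\rangle,$$
so that $A_{1,\epsilon}=C(\mathbb T^2)_\epsilon$ and, since the $a_i$ are free, the $b_j$ are free, and $a_i$ commutes with $b_j$, one has $A_{n,0}=C^*(F_n)\otimes_{\max}C^*(F_n)=C^*(F_n\times F_n)$.

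First I would check that each $A_{n,\epsilon}$ has the LP, following Theorem \ref{SoftTorusthe LP} verbatim. Given a $\ast$-homomorphism $f\colon A_{n,\epsilon}\to B/I\subset M(B/I\otimes K)$, put $U_i=f(a_i)$, $V_j=f(b_j)$; by \cite{Mingo} each $U_i,V_j$ lifts to a unitary in $M(B\otimes K)$. As in Construction \ref{construction} one defines the companion $\ast$-homomorphism $g$ by $g(a_i)=-U_i^*$, $g(b_j)=V_j$ (a $\ast$-homomorphism by Lemma \ref{commutator}) and, for $t<1$, the amplified pairs
$$U_{t,i}=\begin{pmatrix} tU_i & \sqrt{1-t^2}\,\mathbb 1\\ \sqrt{1-t^2}\,\mathbb 1 & -tU_i^*\end{pmatrix},\qquad V_{t,j}=\begin{pmatrix} V_j&0\\ 0&V_j\end{pmatrix}.$$
The same computation as in (\ref{SmallerCommutator}) gives $\|[U_{t,i},V_{t,j}]\|\le t\,\|[U_i,V_j]\|\le t\epsilon<\epsilon$ for all $i,j$, so the pairs $(U_{t,i},V_{t,j})$ define a $\ast$-homomorphism $f_t\colon A_{n,\epsilon}\to M_2(M(B/I\otimes K))$ with $f_t\to f\oplus g$ pointwise as $t\to 1$. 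The crucial input is Remark \ref{RemarkCommutators}, which is stated precisely for a finite family of unitary pairs: it lets one lift each $f_t$ (with $t<1$) to an honest $\ast$-homomorphism. Arveson's theorem (pointwise limits of liftable c.p.\ maps are liftable) then shows $f\oplus g$ lifts to a c.p.\ map, and Corollary \ref{CharacterizationLP} yields the LP of $A_{n,\epsilon}$.

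Next I would identify the limit. For $\epsilon'<\epsilon$ the universal property gives a surjection $A_{n,\epsilon}\to A_{n,\epsilon'}$ fixing the generators, since the generators of $A_{n,\epsilon'}$ satisfy the defining relations of $A_{n,\epsilon}$. Fixing $\epsilon_k\searrow 0$, these assemble into an inductive system $A_{n,\epsilon_1}\to A_{n,\epsilon_2}\to\cdots$ with surjective connecting maps, together with compatible surjections $\mu_k\colon A_{n,\epsilon_k}\to A_{n,0}$. I claim the $\mu_k$ induce an isomorphism $\varinjlim_k A_{n,\epsilon_k}\cong A_{n,0}=C^*(F_n\times F_n)$, and this is immediate from the universal property of the colimit: for any $D$ a $\ast$-homomorphism out of $\varinjlim_k A_{n,\epsilon_k}$ is a compatible family $(\psi_k)$, and because the connecting maps fix the generators, compatibility forces a single family of unitaries $U_i=\psi_k(a_i)$, $V_j=\psi_k(b_j)$ with $\|[U_i,V_j]\|\le\epsilon_k$ for all $k$, hence $[U_i,V_j]=0$; thus $\operatorname{Hom}(\varinjlim_k A_{n,\epsilon_k},D)=\operatorname{Hom}(A_{n,0},D)$ naturally in $D$, and Yoneda identifies the two algebras. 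Since each $A_{n,\epsilon_k}$ has the LP, this exhibits $C^*(F_n\times F_n)$ as an inductive limit of C*-algebras with the LP.

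The case $n=\infty$ reduces to the finite one: $F_\infty\times F_\infty=\bigcup_n(F_n\times F_n)$, so $C^*(F_\infty\times F_\infty)=\varinjlim_n C^*(F_n\times F_n)$ with injective connecting maps, and a countable inductive limit of inductive limits of LP algebras is again a countable inductive limit of LP algebras. I expect the only genuine work to sit in the lifting step for $A_{n,\epsilon}$, where one must choose a single quasicentral approximate unit controlling the finitely many commutators $[U_i,V_j]$ at once; this is exactly what Remark \ref{RemarkCommutators} supplies, so the main obstacle has already been isolated there, and the remainder is the soft-commutator bookkeeping of Construction \ref{construction} carried out for $2n$ generators. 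Attacking $n=\infty$ directly would be the delicate point, since a single approximate unit need not tame infinitely many commutators simultaneously, which is why I would route through the reduction above.
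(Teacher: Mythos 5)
Your proposal is correct and follows essentially the same route as the paper: realize $C^*(F_n\times F_n)$ as the inductive limit, as $\epsilon\to 0$, of the softened algebras $A_{n,\epsilon}$ (the paper's $A_\epsilon$), prove each has the LP by running the argument of Theorem \ref{SoftTorusthe LP} with Remark \ref{RemarkCommutators} supplying the simultaneous control of the finitely many commutators, and reduce $n=\infty$ to finite $n$ via $C^*(F_\infty\times F_\infty)=\varinjlim_n C^*(F_n\times F_n)$. The only difference is that you spell out details the paper leaves implicit (the identification of the colimit and the definition of $g$ and $f_t$ for $2n$ generators), and you correctly isolate Remark \ref{RemarkCommutators} as the one point where the multi-generator case requires more than the single soft torus.
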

\begin{proof}  Let at first $n\in \mathbb N$. The C*-algebra $C^*(F_n\times F_n)$ can be written as the inductive limit (with surjective connecting maps) of the universal C*-algebras
$$A_\epsilon = C^*\langle u_i, v_i \;|\;  u_i \; \text{and} \; v_i \; \text{are unitaries}, \;  i= 1, \ldots, n, \; \|[u_i, v_j]\| \le \epsilon,\forall  i, j = 1, \ldots, n \rangle$$

\noindent as $\epsilon \to 0$. So it is sufficient to show that each $A_{\epsilon}$ has the LP.  By Remark \ref{RemarkCommutators}, the proof proceeds exactly as in Theorem \ref{SoftTorusthe LP}.

 Since $C^*(F_{\infty}\times F_{\infty})$ can be written as an inductive limit of $C^*(F_n\times F_n)$ as $n\to \infty$ (with injective connecting maps), combining this with the result for $n\in \mathbb N$ one obtains easily that $C^*(F_{\infty}\times F_{\infty})$ is an inductive limit of C*-algebras with the LP.

\end{proof}

An inductive limit (with surjective connecting maps) of C*-algebras with the LP need not have the (L)LP. Such an example is constructed in \cite[Ex. 2.16]{Ozawa}. Below we give
examples of different kind showing this.

\begin{proposition} Let $A$ be any C*-algebra without the (L)LP. Then the cone $CA$  is an inductive limit of C*-algebras with the LP but does not have the (L)LP.
\end{proposition}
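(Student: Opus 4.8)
The statement has two essentially independent halves, and I would treat them separately. That $CA$ fails the (L)LP is immediate: as recalled in the Introduction, $cone(A)$ has the LLP if and only if $A$ has the LLP \cite{Kirchberg}, so the hypothesis that $A$ lacks the LLP forces $CA=cone(A)$ to lack it too, and since the LP implies the LLP, $CA$ fails the LP as well. (I read ``without the (L)LP'' as failing the LLP, since it is to the LLP that the cited equivalence applies and since failing the LLP entails failing the LP.)

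For the inductive-limit assertion the plan is to run, in greater generality, the argument behind Theorem \ref{SoftTorusthe LP} and Theorem \ref{FnxFn}. Take $A$ separable and fix a presentation of it by contractive generators $a_i$ and relations. Writing $h$ for the coordinate function on $(0,1]$, the cone $CA=C_0((0,1])\otimes A$ is generated by the contractions $x_i=h\otimes a_i$, and --- this is the crucial point --- each relation of $A$ becomes a relation among the $x_i$ whose defect carries a factor of $h$ and so vanishes at the cone point. This is exactly the passage from $[u,v]=0$ to $\|[u,v]\|\le\varepsilon$ that defines the soft tori. I would accordingly let $D_\varepsilon$ be the universal C*-algebra on generators $x_i$ subject to the $\varepsilon$-relaxed coned relations; tightening the tolerance gives surjections $D_\varepsilon\twoheadrightarrow D_{\varepsilon'}$ for $\varepsilon>\varepsilon'$, with $CA=\varinjlim_{\varepsilon\to 0}D_\varepsilon$. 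It then remains to see that each $D_\varepsilon$ has the LP, and by Corollary \ref{CharacterizationLP} this is a question of lifting a $*$-homomorphism (possibly after adding a direct summand $g$): the generators $x_i$ are non-unitary contractions and lift freely to contractions, after which the $\varepsilon$-relaxed relations are restored by a Pedersen-type perturbation of the lifts, precisely as in Lemma \ref{FormerClaim} and Remark \ref{RemarkCommutators}.

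The step I expect to be the real obstacle is this last one, carried out uniformly for the arbitrary relations of a general $A$ rather than for the single commutation relation of a soft torus: one must verify that relaxing the coned relations by $\varepsilon$ yields relations whose $*$-homomorphisms lift, so that $D_\varepsilon$ has the LP, and that the $D_\varepsilon$ do exhaust $CA$ as $\varepsilon\to 0$. It is the vanishing of every defect at the cone point that should make this possible in full generality, and formulating it cleanly amounts to the structural statement that the cone over any separable C*-algebra --- and, more generally, any separable contractible C*-algebra --- is an inductive limit of C*-algebras with the LP. As a consistency check one can observe that $A\cong P/J$ for some projective, hence (by Corollary \ref{CharacterizationLP}) LP, C*-algebra $P$ \cite{LoringBook}; applying the exact functor $C_0((0,1])\otimes-$ gives $CA\cong CP/CJ$ with $CP=C_0((0,1])\otimes P$ having the LP by Kirchberg's theorem on tensoring with nuclear C*-algebras \cite{Kirchberg}, so that $CA$ is at least a quotient of an LP algebra; upgrading this quotient to an inductive limit of LP algebras is an equivalent formulation of the same obstacle.
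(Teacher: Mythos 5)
The inductive-limit half of the statement is not actually established in your proposal: you correctly isolate the assertion that $CA$ is an inductive limit of C*-algebras with the LP as ``the real obstacle,'' but then leave it open. The paper closes this gap by citation: by \cite{LoringShulman}, the cone over any separable C*-algebra is an inductive limit of \emph{projective} C*-algebras, and projectivity implies the LP (a projective algebra satisfies condition (ii) of Corollary \ref{CharacterizationLP} with $g=0$). Your proposed route --- coning a presentation of $A$ and relaxing the relations by $\varepsilon$ --- is in the right spirit, but the perturbation argument of Lemma \ref{FormerClaim} is tailored to commutators of unitaries (the $2\times 2$ dilation $U_t$, the quasicentral approximate unit, Pedersen's inequality); nothing in your sketch explains how to restore arbitrary $\varepsilon$-relaxed coned relations after lifting, and doing so in general amounts to reproving the Loring--Shulman theorem. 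As written, this half is a conjecture rather than a proof.

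For the other half, your reduction to Kirchberg's equivalence ``$cone(A)$ has the LLP iff $A$ does'' only proves: if $A$ fails the LLP then $CA$ fails the LLP (hence also the LP). It does not cover the reading ``$A$ fails the LP $\Rightarrow$ $CA$ fails the LP,'' which is part of the (L)LP statement and cannot be deduced from the LLP case, since it is open whether the LLP implies the LP. The paper instead gives a direct, self-contained argument treating both properties at once: let $ev_1: CA \to A$ be evaluation at $1$ and $\delta: A \to CA$ be $\delta(a)=t\otimes a$; if $CA$ had the (L)LP, then for any ccp map $\phi: A\to B/I$ (resp.\ to the Calkin algebra) the composition $\phi\circ ev_1$ would admit a (local) ccp lift $\psi$, and $\psi\circ\delta$ would then be a (local) ccp lift of $\phi$, contradicting the hypothesis on $A$. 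You should replace the appeal to the cited equivalence by this two-line argument.
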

\begin{proof} The cone over any separable C*-algebra is an inductive limit of C*-algebras that are projective (\cite{LoringShulman}) and therefore have the LP. It remains to show that if $A$ fails the (L)LP, then so does $CA$. Let $ev_1: CA \to A$ be the evaluation at 1 and let $\delta: A \to CA$ be given by $\delta(a) = t \otimes a$. If $CA$ had the (L)LP, then
for any cp map $\phi$ from $A$ to any quotient $B/I$ (the Calkin algebra, respectively), the map $\phi\circ ev_1$ would lift to a cp map $\psi$ from  CA  to $B$ ($B(H)$, respectively).  It is straightforward to check that then $\psi\circ \delta$ would be  a cp lift of $\phi$.
\end{proof}

\medskip

\section{Other properties of soft tori}

Here we give short proofs of several known results  about soft tori and prove some new properties of them.

\subsection{RFD}

\begin{theorem}\label{SoftTorusRFD}(Eilers-Exel \cite{EilersExel}) Soft torus is RFD.
\end{theorem}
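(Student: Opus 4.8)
The plan is to deduce RFD from the machinery already built for the LP, by lifting along the surjection $\pi: \mathcal{D} \to B(H)$ of the RFD setup rather than along an arbitrary quotient $B/I$. Fix a faithful (hence isometric) representation $f: C(\mathbb{T}^2)_\epsilon \to B(H)$ on a separable Hilbert space, and write $U = f(u)$, $V = f(v)$, so that $\|[U,V]\| \le \epsilon$. Let $P_\alpha \nearrow 1_{B(H)}$ be an increasing net of finite-rank projections, let $\mathcal{D} \subset \prod_\alpha M_{n_\alpha}$ be the algebra of $\ast$-strongly convergent nets, and $\pi: \mathcal{D} \to B(H)$ the $\ast$-strong limit map, exactly as in the setup preceding Corollary \ref{RFD}.

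First I would observe that $V$ lifts to a unitary in $\mathcal{D}$. This is the same unitary-lifting fact used in the proof of Corollary \ref{RFD}: since the unitary group of $B(H)$ is connected, $V = \exp(iX)$ for some self-adjoint $X$, the net $(P_\alpha X P_\alpha)$ lies in $\mathcal{D}$ with $\ast$-strong limit $X$, and hence $\exp(i(P_\alpha X P_\alpha))$ is a unitary in $\mathcal{D}$ lifting $V$. With this in hand I apply Construction \ref{construction} to $f$ to produce $g$ and the path $f_t: C(\mathbb{T}^2)_\epsilon \to M_2(B(H))$ with $\|[f_t(u), f_t(v)]\| \le t\epsilon$, and then invoke Lemma \ref{FormerClaim} with $B = \mathcal{D}$ and $I = \ker\pi$: because $f(v) = V$ lifts to a unitary in $\mathcal{D}$, each $f_t$ (for $t<1$) lifts to a $\ast$-homomorphism $\tilde f_t: C(\mathbb{T}^2)_\epsilon \to M_2(\mathcal{D})$.

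The key point is that $M_2(\mathcal{D}) \subset \prod_\alpha M_{2n_\alpha}$ consists of nets of genuine finite-dimensional representations: composing $\tilde f_t$ with the coordinate projections yields, for each $\alpha$, an honest finite-dimensional representation $\rho_\alpha^{(t)}: C(\mathbb{T}^2)_\epsilon \to M_{2n_\alpha}$ respecting the defining commutator relation, and $\pi(\tilde f_t) = f_t$ gives $\sup_\alpha \|\rho_\alpha^{(t)}(a)\| = \|\tilde f_t(a)\| \ge \|f_t(a)\|$ for every $a$. To conclude RFD I check that this family separates points. Given $0 \neq a$, by (\ref{eq0}) we have $\|f_t(a)\| \to \|(f\oplus g)(a)\|$ as $t\to 1$, and since $f$ is a faithful direct summand of $f\oplus g$ we get $\|(f\oplus g)(a)\| \ge \|f(a)\| = \|a\| > 0$; hence $\|f_{t_0}(a)\| > 0$ for some $t_0 < 1$, which forces $\rho_\alpha^{(t_0)}(a) \neq 0$ for some $\alpha$. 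Thus the family $\{\rho_\alpha^{(t)}\}$ is separating and $C(\mathbb{T}^2)_\epsilon$ is RFD.

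I expect the only genuinely delicate steps to be the unitary lifting of $V$ along $\pi$ and the bookkeeping that the coordinates of a $\ast$-homomorphism into $M_2(\mathcal{D})$ are literally finite-dimensional representations obeying $\|[\cdot,\cdot]\|\le\epsilon$; the remaining semicontinuity/norm argument is routine. Everything else reuses Construction \ref{construction} and Lemma \ref{FormerClaim} verbatim, so the argument is essentially that of Theorem \ref{SoftTorusthe LP} carried out over the RFD-detecting surjection $\mathcal{D}\to B(H)$ instead of a general quotient.
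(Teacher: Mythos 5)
Your argument is correct and is essentially the paper's own proof: both lift the unitary $V=e^{iX}$ to $\mathcal D$ by compressing and exponentiating $X$, apply Construction \ref{construction} and Lemma \ref{FormerClaim} over the surjection $\mathcal D\to B(H)$ to realize each $f_t$ ($t<1$) as a net of finite-dimensional representations, and use $f_t\to f\oplus g$ pointwise with $f$ faithful to see that this family separates points. The extra bookkeeping you supply (coordinates of unitaries in $M_2(\mathcal D)$ being unitary, the $\sup$-norm estimate) is fine and only makes explicit what the paper leaves implicit.
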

\begin{proof} Let $\mathcal D$ be the C*-algebra of all $\ast$-strongly converging sequences of matrices (this C*-algebra was already used in Corollary \ref{LiShen}). Then $B(H)$ is its quotient.
Let $f$ be an embedding of soft torus into $B(H)$. Let $f_t$ and $g$ be as in the Construction \ref{construction}. Since $f_t\to f\oplus g$ pointwisely as $t\to 1$, the family $\{f_t\}_{t<1}$ separates points of $C(\mathbb T^2)_{\epsilon}$.  Since any unitary $U$ in $B(H)$ lifts to a unitary  in $\mathcal D$ (indeed, just write $U = e^{iS}$ and lift $S$), by Lemma \ref{FormerClaim} for each  $t<1$, $f_t$  lifts to a sequence of finite-dimensional representations $\tilde f_t^{(i)}$. Then the family $\{\tilde f_t^{(i)}\}_{i\in \mathbb N, t<1}$ separates points of $C(\mathbb T^2)_{\epsilon}$.
\end{proof}

\medskip

\subsection{Continuous field}

For any $\epsilon>0$ and any $\epsilon_1> \epsilon_2> 0$ let $$\phi_{\epsilon}: C^*(F_2)\to C(\mathbb T^2)_{\epsilon}\;\; \text{ and}\;\; \phi_{\epsilon_1, \epsilon_2}:
C(\mathbb T^2)_{\epsilon_1} \to C(\mathbb T^2)_{\epsilon_2}$$ be the  canonical surjections. Clearly $\phi_{\epsilon_1, \epsilon_2} \circ  \phi_{\epsilon_1} = \phi_{\epsilon_2}.$

One of the main results of \cite{Exel} states  that soft tori form a continuous field of C*-algebras over the interval $[0,2]$ such that
$C^*(\mathbb T^2)_{\epsilon}$ is the fiber over $\epsilon$. Here we give a short proof of this fact.  Exel obtains his result by proving that
$$\epsilon \mapsto \|\phi_{\epsilon}(a)\|$$ is a continuous function on $[0, 2]$, for any $a\in C^*(F_2)$. Since this is an increasing function,
for any $\epsilon_0$ the limits $$\lim_{\epsilon \to \epsilon_0^{-}} \|\phi_{\epsilon}(a)\|\;\; \text{ and} \;\; \lim_{\epsilon \to \epsilon_0^{+}} \|\phi_{\epsilon}(a)\|$$ exist and therefore to prove continuity at $\epsilon_0$ it is sufficient  to show
$$\lim_{\epsilon \to \epsilon_0^{-}} \|\phi_{\epsilon}(a)\| =  \|f_{\epsilon_0}(a)\|\;\; \text{ and } \;\; \lim_{\epsilon \to \epsilon_0^{+}} \|\phi_{\epsilon}(a)\| =  \|\phi_{\epsilon_0}(a)\|.$$
 As was mentioned in \cite{Exel},  the hard part is the first equality.

We start with the non-trivial part.

\begin{lemma}\label{Lemma1ContinuousField} $\lim_{\epsilon \to \epsilon_0^{-}} \|\phi_{\epsilon}(a)\| = \|\phi_{\epsilon_0}(a)\|$.
\end{lemma}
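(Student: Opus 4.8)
The plan is to prove the two inequalities separately. The inequality $\lim_{\epsilon \to \epsilon_0^{-}} \|\phi_{\epsilon}(a)\| \le \|\phi_{\epsilon_0}(a)\|$ is immediate: for $\epsilon < \epsilon_0$ the connecting map $\phi_{\epsilon_0, \epsilon}\colon C(\mathbb T^2)_{\epsilon_0} \to C(\mathbb T^2)_{\epsilon}$ is a surjective $\ast$-homomorphism with $\phi_{\epsilon_0,\epsilon}\circ\phi_{\epsilon_0} = \phi_{\epsilon}$, hence contractive, so $\|\phi_{\epsilon}(a)\| = \|\phi_{\epsilon_0,\epsilon}(\phi_{\epsilon_0}(a))\| \le \|\phi_{\epsilon_0}(a)\|$ (this is the monotonicity already observed in the preceding discussion). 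The content of the lemma is the reverse inequality, and for this I would run Construction \ref{construction} on the \emph{identity} representation of the fiber $C(\mathbb T^2)_{\epsilon_0}$.

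Concretely, take $B = C(\mathbb T^2)_{\epsilon_0}$ and let $f = \mathrm{id}$, so $U = u$, $V = v$ are the canonical generators. Construction \ref{construction} produces, for each $t < 1$, unitaries $U_t, V_t \in M_2\big(C(\mathbb T^2)_{\epsilon_0}\big)$ together with the associated $\ast$-homomorphism $f_t$, and by (\ref{SmallerCommutator}) we have the crucial estimate $\|[U_t, V_t]\| \le t\epsilon_0 < \epsilon_0$. The key observation is that, because the commutator has shrunk by the factor $t$, the pair $(U_t, V_t)$ satisfies the defining relation of the \emph{smaller} soft torus $C(\mathbb T^2)_{t\epsilon_0}$; hence by universality it induces a $\ast$-homomorphism
$$\psi_t\colon C(\mathbb T^2)_{t\epsilon_0} \to M_2\big(C(\mathbb T^2)_{\epsilon_0}\big), \qquad \psi_t(u) = U_t,\ \psi_t(v) = V_t,$$
and one checks directly that $f_t = \psi_t \circ \phi_{\epsilon_0, t\epsilon_0}$. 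Since $\psi_t$ is contractive and $\phi_{\epsilon_0,t\epsilon_0}\circ\phi_{\epsilon_0} = \phi_{t\epsilon_0}$, we obtain for every $a \in C^*(F_2)$ the bound
$$\big\|f_t\big(\phi_{\epsilon_0}(a)\big)\big\| = \big\|\psi_t\big(\phi_{t\epsilon_0}(a)\big)\big\| \le \big\|\phi_{t\epsilon_0}(a)\big\|.$$

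Now I would let $t \to 1$. By the pointwise convergence (\ref{eq0}), $f_t(\phi_{\epsilon_0}(a)) \to (f\oplus g)(\phi_{\epsilon_0}(a))$ in norm, and since $f = \mathrm{id}$ the direct summand $f(\phi_{\epsilon_0}(a)) = \phi_{\epsilon_0}(a)$ survives, giving $\|(f\oplus g)(\phi_{\epsilon_0}(a))\| \ge \|\phi_{\epsilon_0}(a)\|$. Combining these,
$$\|\phi_{\epsilon_0}(a)\| \le \big\|(f\oplus g)(\phi_{\epsilon_0}(a))\big\| = \lim_{t\to 1^-}\big\|f_t(\phi_{\epsilon_0}(a))\big\| \le \lim_{t\to 1^-}\big\|\phi_{t\epsilon_0}(a)\big\| = \lim_{\epsilon \to \epsilon_0^-}\|\phi_{\epsilon}(a)\|,$$
where the last equality uses that $t\epsilon_0 \to \epsilon_0^-$ and that the limit exists by monotonicity. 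Together with the easy direction this yields equality. I do not expect a serious obstacle here: the entire argument rests on the single structural feature of Construction \ref{construction}, namely that doubling the unitary shrinks the commutator to $\le t\epsilon_0$ (so the new pair lives on a strictly softer torus) while the limiting map $f\oplus g$ still contains a faithful copy of the original fiber. The only point requiring care is bookkeeping of the domains of $f_t$, $\psi_t$, and the various connecting maps, so that the identity $f_t = \psi_t\circ\phi_{\epsilon_0,t\epsilon_0}$ and the contractivity estimate are applied to the correct algebras.
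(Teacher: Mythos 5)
Your proof is correct and is essentially the paper's argument: both apply Construction \ref{construction} to a representation of the fiber $C(\mathbb T^2)_{\epsilon_0}$ that computes the norm of $\phi_{\epsilon_0}(a)$, observe that the resulting $f_t$ factor through strictly softer tori because the commutator shrinks to $\le t\epsilon_0$, and let $t\to 1$ using that $f\oplus g$ dominates $f$ in norm. The only cosmetic difference is that you run the construction on the identity representation of the universal algebra and factor through $C(\mathbb T^2)_{t\epsilon_0}$ via the a priori bound, whereas the paper uses a norm-attaining representation on $B(H)$ and factors through $C(\mathbb T^2)_{\epsilon_n}$ with $\epsilon_n$ the exact commutator norm.
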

\begin{proof} Since $\|\phi_{\epsilon}(a)\|$ is an increasing function of $\epsilon$, we only need to show that
$\lim_{\epsilon \to \epsilon_0^{-}} \|\phi_{\epsilon}(a)\| \ge \|\phi_{\epsilon_0}(a)\|$. Let $\pi: C(\mathbb T^2)_{\epsilon_0} \to B(H)$ be a representation such that $\|\pi(\phi_{\epsilon_0}(a))\| = \|\phi_{\epsilon_0}(a)\|.$ We define a representation  $\sigma: C(\mathbb T^2)_{\epsilon_0} \to B(H)$ by
$$\sigma(u) = \pi(u), \;\; \sigma(v) = - \pi(v^*).$$ By Construction \ref{construction}, $\pi\oplus \sigma$ is a pointwise limit of representations $\rho_n$ of $C(\mathbb T^2)_{\epsilon}$ with $\|[\rho_n(u), \rho_n(v)]\|< \epsilon_0$. Let $\epsilon_n = \|[\rho_n(u), \rho_n(v)]\|$, $n\in \mathbb N$. Then
$\epsilon_n < \epsilon_0$, $\epsilon_n\to \epsilon _0$, and $\rho_n$ factors through $C(\mathbb T^2)_{\epsilon_n}$, that is can be written as $\rho_n = \gamma_n \circ \phi_{\epsilon_0, \epsilon_n}$, for some representation $\gamma_n$ of $C(\mathbb T^2)_{\epsilon_n}$. Then
\begin{multline*} \|\phi_{\epsilon_0}(a)\| = \|\pi(\phi_{\epsilon_0}(a))\| \le \|(\pi\oplus \sigma)(\phi_{\epsilon_0}(a))\|  = \lim \|\rho_n(\phi_{\epsilon_0}(a))\| \\ = \lim \|\gamma_n(\phi_{\epsilon_0, \epsilon_n}(\phi_{\epsilon_0}(a)))\|  = \lim \|\gamma_n(\phi_{\epsilon_n}(a))\|
\le \limsup \|\phi_{\epsilon_n}(a)\| = \lim_{\epsilon\to \epsilon_0^{-}} \|\phi_{\epsilon}(a)\|.
\end{multline*}
\end{proof}

The next lemma is a proof of the easy part. It holds not only for soft tori but for any "softening" of relations  of any C*-algebra.

\begin{lemma}\label{Lemma2ContinuousField} $\|\phi_{\epsilon_0}(a)\|= \lim_{\epsilon \to \epsilon_0^+} \|\phi_{\epsilon}(a)\|.$
\end{lemma}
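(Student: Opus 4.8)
The plan is to reduce the statement to a standard fact about quotient norms for an increasing family of ideals. Write $J_\epsilon \triangleleft C^*(F_2)$ for the kernel of $\phi_\epsilon$, so that $C(\mathbb T^2)_\epsilon = C^*(F_2)/J_\epsilon$, the map $\phi_\epsilon$ is the quotient map, and consequently $\|\phi_\epsilon(a)\| = \mathrm{dist}(a, J_\epsilon)$ for $a \in C^*(F_2)$. Setting $x = [u,v]^*[u,v] \ge 0$ in $C^*(F_2)$, the universal description of the soft torus identifies $J_\epsilon$ with the closed two-sided ideal generated by $(x - \epsilon^2)_+$: imposing $\|[u,v]\| \le \epsilon$ is exactly forcing the spectrum of the image of $x$ into $[0, \epsilon^2]$, i.e.\ killing $(x-\epsilon^2)_+$, and the smallest ideal doing so is the one generated by that element.

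The key step is to prove that $J_{\epsilon_0} = \overline{\bigcup_{\epsilon > \epsilon_0} J_\epsilon}$, where the union is directed and increasing as $\epsilon \downarrow \epsilon_0$ (a smaller $\epsilon$ gives a larger ideal). For one inclusion, for each $\epsilon > \epsilon_0$ the function $t \mapsto (t - \epsilon^2)_+$ vanishes on $[0, \epsilon^2] \supseteq [0, \epsilon_0^2]$, hence factors as $(t - \epsilon^2)_+ = h(t)(t - \epsilon_0^2)_+$ for a continuous $h$ (namely $h(t) = (t-\epsilon^2)/(t-\epsilon_0^2)$ for $t > \epsilon^2$ and $h(t)=0$ otherwise); by functional calculus $(x-\epsilon^2)_+ = h(x)(x - \epsilon_0^2)_+ \in J_{\epsilon_0}$, so $J_\epsilon \subseteq J_{\epsilon_0}$. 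For the reverse inclusion, since $t \mapsto (t - \epsilon^2)_+$ converges uniformly to $(t - \epsilon_0^2)_+$ on the (compact) spectrum of $x$ as $\epsilon \downarrow \epsilon_0$, continuity of functional calculus gives $(x - \epsilon^2)_+ \to (x - \epsilon_0^2)_+$ in norm; as $\overline{\bigcup_\epsilon J_\epsilon}$ is a closed ideal containing each $(x - \epsilon^2)_+$, it contains the limit $(x - \epsilon_0^2)_+$ and hence the ideal it generates, namely $J_{\epsilon_0}$.

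With this identification in hand, the lemma follows from the elementary fact that if $J = \overline{\bigcup_i J_i}$ is the closure of an increasing family of ideals, then $\mathrm{dist}(a, J) = \inf_i \mathrm{dist}(a, J_i)$ for every $a$. One inequality is monotonicity; for the other, approximate an almost-optimal $j \in J$ within $\eta$ by some $j' \in J_\epsilon$ and estimate $\|a - j'\| \le \|a-j\| + \eta$. Applying this with $J = J_{\epsilon_0}$ yields
$$\|\phi_{\epsilon_0}(a)\| = \mathrm{dist}(a, J_{\epsilon_0}) = \inf_{\epsilon > \epsilon_0}\mathrm{dist}(a, J_\epsilon) = \inf_{\epsilon > \epsilon_0}\|\phi_\epsilon(a)\|,$$
and since $\epsilon \mapsto \|\phi_\epsilon(a)\|$ is increasing this infimum is exactly $\lim_{\epsilon \to \epsilon_0^+}\|\phi_\epsilon(a)\|$.

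I expect the main (and essentially only) obstacle to be the clean verification of $J_{\epsilon_0} = \overline{\bigcup_{\epsilon>\epsilon_0}J_\epsilon}$ through the functional-calculus factorization above; once the two ideals coincide, the remainder is a routine quotient-norm argument. Notably, this argument used nothing specific about the commutator relation, which explains why the lemma holds for the softening of any relation of any C*-algebra, as claimed.
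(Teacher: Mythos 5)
Your proof is correct, but it takes a genuinely different route from the paper. The paper's argument is representation-theoretic: it picks a sequence $\epsilon_n\downarrow\epsilon_0$, observes that $\phi_{\epsilon_0}(a)\mapsto q\left(\prod\phi_{\epsilon_n}(a)\right)$ gives a well-defined $\ast$-homomorphism $j$ from $C(\mathbb T^2)_{\epsilon_0}$ into $\prod C(\mathbb T^2)_{\epsilon_n}/\bigoplus C(\mathbb T^2)_{\epsilon_n}$ (because the commutator of the images of $u,v$ in that quotient has norm $\limsup\epsilon_n=\epsilon_0$), and then uses contractivity of $j$ together with the fact that the quotient norm is $\limsup_n\|\phi_{\epsilon_n}(a)\|$. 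You instead work entirely inside $C^*(F_2)$: you identify $J_\epsilon=\ker\phi_\epsilon$ as the ideal generated by $\left([u,v]^*[u,v]-\epsilon^2\right)_+$, prove $J_{\epsilon_0}=\overline{\bigcup_{\epsilon>\epsilon_0}J_\epsilon}$ by a functional-calculus factorization and a norm-convergence argument, and finish with the standard quotient-norm identity for an increasing net of ideals. All steps check out (in particular the continuity of your factor $h$ at $t=\epsilon_0^2$ and at $t=\epsilon^2$ is fine, and the identification of $J_\epsilon$ with the generated ideal is justified because a representation of $C^*(F_2)$ factors through $C(\mathbb T^2)_\epsilon$ exactly when it kills that positive element). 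What each approach buys: yours yields the extra structural information that the kernels form a "continuous" increasing family of singly generated ideals, which is of independent interest; the paper's sequence-algebra argument avoids any description of the kernels and therefore transfers verbatim to an arbitrary softening of arbitrarily many relations of any C*-algebra, whereas your argument, as written, uses that each relation is a norm bound $\|r\|\le\epsilon$ so that the kernel is generated by elements of the form $(r^*r-\epsilon^2)_+$ --- this still covers all softenings in the sense of the paper, but the reduction is slightly less immediate than your closing remark suggests.
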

\begin{proof}
Let $\epsilon_n$, $n\in \mathbb N$,  be such that $\epsilon_n > \epsilon_0$ and $\epsilon_n \to \epsilon_0$. Let $$q: \prod C(\mathbb T^2)_{\epsilon_n} \to
\prod C(\mathbb T^2)_{\epsilon_n} / \bigoplus C(\mathbb T^2)_{\epsilon_n}$$ be the canonical surjection. Since $\epsilon_n \to \epsilon_0$, the $\ast$-homomorphism
$$j: C(\mathbb T^2)_{\epsilon_0}  \to \prod C(\mathbb T^2)_{\epsilon_n} / \oplus C(\mathbb T^2)_{\epsilon_n},  \;\;\; \phi_{\epsilon_0}(a) \mapsto  q\left(\prod \phi_{\epsilon_n}(a)\right)$$ is well-defined. Then
$$\|\phi_{\epsilon_0}(a)\|\ge \|j(\phi_{\epsilon_0}(a))\| = \limsup \|\phi_{\epsilon_n}(a)\| = \lim_{\epsilon \to \epsilon_0^+} \|\phi_{\epsilon}(a)\|$$ and, since $\|\phi_{\epsilon}(a)\|$ is an increasing function, the result follows.
\end{proof}

\medskip

\begin{theorem}\label{ContinuousField} (Exel \cite{Exel}) There exists a continuous field of C*-algebras over the interval $[0,2]$ such
that $C(\mathbb T^2)_{\epsilon}$ is the fiber over $\epsilon$ and such that
$\epsilon\in [0, 2] \mapsto  \phi_{\epsilon}(a)\in C(\mathbb T^2)_{\epsilon}$
is a continuous section for every $a \in C^*(F_2)$.
\end{theorem}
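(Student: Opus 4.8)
The plan is to deduce from the two preceding lemmas that the norm function $\epsilon \mapsto \|\phi_{\epsilon}(a)\|$ is continuous on $[0,2]$ for every $a\in C^*(F_2)$, and then to invoke the standard criterion (due to Dixmier) that manufactures a continuous field out of a sufficiently rich family of sections whose norms vary continuously.

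First I would combine Lemma \ref{Lemma1ContinuousField} and Lemma \ref{Lemma2ContinuousField}. The former supplies left-continuity, $\lim_{\epsilon\to\epsilon_0^-}\|\phi_{\epsilon}(a)\| = \|\phi_{\epsilon_0}(a)\|$, and the latter supplies right-continuity, $\|\phi_{\epsilon_0}(a)\| = \lim_{\epsilon\to\epsilon_0^+}\|\phi_{\epsilon}(a)\|$. Together they show that $\epsilon\mapsto\|\phi_{\epsilon}(a)\|$ is continuous at every $\epsilon_0\in[0,2]$, with the understanding that only the relevant one-sided limit is used at the endpoints $\epsilon=0$ and $\epsilon=2$.

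Next I would set up the section algebra. For $a\in C^*(F_2)$ let $s_a$ denote the section $\epsilon\mapsto\phi_{\epsilon}(a)$ in $\prod_{\epsilon\in[0,2]} C(\mathbb T^2)_{\epsilon}$, and put $\Lambda = \{\, s_a : a\in C^*(F_2)\,\}$. Since each $\phi_{\epsilon}$ is a surjective $\ast$-homomorphism and $\phi_{\epsilon_1,\epsilon_2}\circ\phi_{\epsilon_1} = \phi_{\epsilon_2}$, the assignment $a\mapsto s_a$ is a $\ast$-homomorphism, so $\Lambda$ is a $\ast$-subalgebra of the product; moreover $\{\, s_a(\epsilon) : a\in C^*(F_2)\,\} = C(\mathbb T^2)_{\epsilon}$ for every $\epsilon$ by surjectivity, so $\Lambda$ is fiberwise dense (in fact fiberwise onto). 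By the previous step, $\epsilon\mapsto\|s_a(\epsilon)\|$ is continuous for every $a$. These are exactly the hypotheses of Dixmier's criterion, which then yields a unique continuous field structure on $(C(\mathbb T^2)_{\epsilon})_{\epsilon\in[0,2]}$ whose continuous sections $\Gamma$ form the local uniform closure of $\Lambda$. In particular each $s_a$ lies in $\Gamma$, which is precisely the assertion that $\epsilon\mapsto\phi_{\epsilon}(a)$ is a continuous section.

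The genuinely hard part of the overall argument is not in this theorem but in Lemma \ref{Lemma1ContinuousField}: the left-continuity, where Construction \ref{construction} is used to realize $\pi\oplus\sigma$ as a pointwise limit of representations of $C(\mathbb T^2)_{\epsilon}$ whose commutator norm is strictly below $\epsilon_0$, thereby factoring through softer tori. Granting the two lemmas, the only remaining obstacle here is the bookkeeping needed to verify Dixmier's hypotheses and to accommodate the one-sided continuity at the endpoints $\epsilon=0$ and $\epsilon=2$, both of which are routine.
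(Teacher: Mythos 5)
Your proposal is correct and follows essentially the same route as the paper: combine Lemma \ref{Lemma1ContinuousField} and Lemma \ref{Lemma2ContinuousField} to get continuity of $\epsilon\mapsto\|\phi_{\epsilon}(a)\|$, then apply Dixmier's criterion to the $\ast$-algebra of sections $\epsilon\mapsto\phi_{\epsilon}(a)$, which is fiberwise onto since each $\phi_{\epsilon}$ is surjective. No gaps.
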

\begin{proof} We repeat Exel's arguments for reader's convenience. Let $S$ be the set of sections
$$\epsilon\in [0, 2] \mapsto  \phi_{\epsilon}(a)\in C(\mathbb T^2)_{\epsilon}$$
for $a \in C(F_2)$. According to \cite{Dixmier} (Propositions 10.2.3 and 10.3.2) all one needs to check is that
$S$ is a *-subalgebra of the algebra of all sections, that the set of all $s(\epsilon)$ as $s$ runs through
$S$ is dense in $C(\mathbb T^2)_{\epsilon}$ and that $\|s(\epsilon)\|$ is continuous as a function of $\epsilon$ for all $s \in S$.
The first two properties are trivial while the last one follows from Lemma \ref{Lemma1ContinuousField} and Lemma \ref{Lemma2ContinuousField}.
\end{proof}

\bigskip

\subsection{Local minima of commutators}
A problem of characterizing  pairs of unitary operators which are
local minimum points for the commutator norm was discussed in \cite{Exel} from different points of view.
In particular it was proved that each pair of non-commuting unitary operators admits a *-strong dilated perturbation to a pair of unitaries with smaller norm of commutator \cite[Th. 4.2]{Exel}. Precisely, given non-commuting unitaries $U, V\in B(H)$, there are sequences of unitaries $U_n, V_n \in B(H^{(\infty)})$, such that $\|[U_n, V_n]\|< \|[U, V]\|$ and the compressions $proj_H\circ U_n|_H$ and $proj_H\circ V_n|_H$ of $U_n, V_n$ to $H$ converge $\ast$-strongly to $U$ and $ V$ respectively. Below we show that this result follows from Construction   (\ref{eq0}). Moreover the result can be generalized in two directions:

\medskip

1) $H^{(\infty)}$ can be replaced by $H^{(2)}$ and the $\ast$-strong convergence by the operator norm convergence;

\medskip

2) If $H$ is infinite-dimensional, then *-strong dilated perturbations
can be replaced by *-strong perturbations.

\begin{theorem}\label{dilations} Let $U$ and $V$ be non-commuting unitary operators on a Hilbert space $H$. Then there are sequences of unitary operators $U_n, V_n \in B(H\oplus H)$, such that $\|[U_n, V_n]\|< \|[U, V]\|$ and the compressions $proj_H\circ U_n|_H$ and $proj_H\circ V_n|_H$ of $U_n, V_n$ to $H$ converge in norm to $U$ and $ V$ respectively.

If $H$ is infinite-dimensional, then
there are sequences of unitary operators $U_n, V_n$ on $H$ such that $\|[U_n, V_n]\|< \|[U, V]\|$ and  $U_n\to U$ $\ast$-strongly and $V_n\to V$ $\ast$-strongly.
\end{theorem}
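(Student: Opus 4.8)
The plan is to deduce both assertions directly from Construction \ref{construction} applied to the embedding-type situation, exactly in the spirit of the proof of Theorem \ref{SoftTorusRFD}. First I would set $\epsilon_0 = \|[U,V]\|$, which is positive by the assumption that $U$ and $V$ do not commute, and regard the pair $(U,V)$ as defining a $\ast$-homomorphism $f \colon C(\mathbb{T}^2)_{\epsilon_0} \to B(H)$ with $f(u) = U$, $f(v) = V$. Running Construction \ref{construction} on $f$ produces, for each $t \in [0,1)$, a pair of unitaries
$$ U_t = \begin{pmatrix} tU & \sqrt{1-t^2}\,\mathbb{1} \\ \sqrt{1-t^2}\,\mathbb{1} & -tU^* \end{pmatrix}, \qquad V_t = \begin{pmatrix} V & 0 \\ 0 & V \end{pmatrix} $$
on $H \oplus H$ satisfying the key commutator estimate \eqref{SmallerCommutator}, namely $\|[U_t, V_t]\| \le t\epsilon_0 < \epsilon_0 = \|[U,V]\|$ for every $t < 1$. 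This already gives the required strict decrease of the commutator norm. To finish the first statement I would choose a sequence $t_n \to 1$, set $U_n := U_{t_n}$ and $V_n := V_{t_n}$, and observe that the compression $\mathrm{proj}_H \circ U_n|_H$ equals $t_n U$, while $\mathrm{proj}_H \circ V_n|_H = V$; since $t_n U \to U$ in operator norm, the dilated perturbations converge in norm, which is the strengthening in direction (1) over Exel's $\ast$-strong, $H^{(\infty)}$-result.

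For the second assertion, when $H$ is infinite-dimensional, the plan is to transport the dilation on $H \oplus H$ back onto $H$ via a unitary identification and then show that the off-diagonal coupling terms can be pushed out to infinity so as to obtain genuine $\ast$-strong perturbations (not merely dilated ones). Concretely, I would fix a unitary $W \colon H \oplus H \to H$ (available since $H$ is infinite-dimensional and hence $H \cong H \oplus H$) and consider $W U_{t_n} W^*$ and $W V_{t_n} W^*$; but rather than a single identification I expect to need $W$ chosen so that the "second copy" of $H$ is carried into a subspace that recedes to infinity as $n \to \infty$. The cleanest way is to use an increasing sequence of isometries or a sequence of unitaries that agree with the identity on larger and larger subspaces: on a dense increasing union of finite-rank pieces the operators $U_{t_n}$, $V_{t_n}$ should be arranged to act as $U$, $V$ themselves up to a correction supported near infinity, so that the transported operators converge $\ast$-strongly to $U$ and $V$ while still having commutator norm bounded by $t_n \epsilon_0 < \epsilon_0$.

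The main obstacle is precisely this last point: realizing the $\ast$-strong (as opposed to norm or merely dilated) convergence on $H$ while preserving the strict commutator inequality. The difficulty is that the perturbation $U_t - (U \oplus -U^*)$ does not vanish in norm as $t \to 1$ (the diagonal blocks only scale by $t$, but the pair is genuinely two-dimensional), so one cannot simply collapse the two copies and pass to a limit. I expect the resolution to rely on the infinite-dimensionality of $H$ to split $H \cong H \oplus H \oplus H \oplus \cdots$ and implement the second copy of the construction on a block that is shifted further out at each stage $n$, so that on any fixed vector the operators eventually agree with $U$ and $V$; the unitarity of each $U_n, V_n$ and the uniform commutator bound $t_n \epsilon_0$ are preserved block by block, while $\ast$-strong convergence follows because every vector is eventually unaffected by the shifted block. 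I would verify $\ast$-strong convergence by checking it on a dense set of vectors, using that the "defect" operators $U_n - U$ and $V_n - V$ tend to $0$ in the $\ast$-strong topology precisely because their supports escape to infinity; this is where the argument must be carried out carefully, whereas the strict commutator bound is immediate from \eqref{SmallerCommutator}.
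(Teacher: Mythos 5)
Your first assertion is handled correctly and in essentially the same way as in the paper: the pair $(U_{t_n},V_{t_n})$ from Construction \ref{construction} has commutator norm at most $t_n\|[U,V]\|<\|[U,V]\|$ by (\ref{SmallerCommutator}), and its compression to the first copy of $H$ is $(t_nU,V)\to(U,V)$ in operator norm.

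The second assertion is where your argument has a gap, compounded by a factual error that points you in the wrong direction. You claim that $U_t-(U\oplus(-U^*))$ does not vanish in norm as $t\to1$; it does: the diagonal blocks differ from $U$ and $-U^*$ by $(1-t)$ in norm and the off-diagonal blocks are $\sqrt{1-t^2}\,\mathbb 1\to0$, so $U_t\to U\oplus(-U^*)$ and $V_t=V\oplus V$ in operator norm. This norm convergence is precisely what the paper exploits: it reduces the problem to carrying a block-diagonal operator $A\oplus B$ on $H\oplus H$ back to $A$ on $H$ up to a $\ast$-strong error. For that the paper invokes a result of Hadwin (described in Lemma 2.4 of \cite{CourtneyShulman}): for infinite-dimensional $H$ there exist unitaries $W_m:H\to H\oplus H$, not depending on $A,B$, with $W_m^*(A\oplus B)W_m\to A$ $\ast$-strongly. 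Setting $U_n:=W_{m(n)}^*U_{t_n}W_{m(n)}$ and $V_n:=W_{m(n)}^*V_{t_n}W_{m(n)}$ for a suitable diagonal choice of $m(n)$ preserves the commutator norm exactly and yields $\ast$-strong convergence to $U$ and $V$. Your proposed substitute --- splitting $H\cong H\oplus H\oplus\cdots$ and shifting ``the second copy of the construction'' out to infinity so that the operators ``eventually agree with $U$ and $V$'' on fixed vectors --- is not carried out and cannot work as literally stated: $U$ and $V$ need not respect any such block decomposition, so there is no meaning to making $U_n$ act as $U$ on an initial block, and what one actually obtains is $\ast$-strong (not eventual pointwise) agreement. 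Establishing it requires proving the absorption statement $W_m^*(A\oplus B)W_m\to A$, which is exactly the content of Hadwin's lemma that your sketch leaves unproved.
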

\begin{proof}  Using  $U_t, V_t$ with $t<1$ in  Construction \ref{construction}, we see that there are $U_n, V_n$ on $H\oplus H$ such that $\|[U_n, V_n]\|< \|[U, V]\|$ and $U_n \to U\oplus (-U^*)$ and $V_n \to V\oplus V$ in the operator norm topology.

For the second statement we apply a result of Hadwin that states that for any $A, B\in B(H)$,where $H$ is infinite-dimensional, there are unitaries $W_m: H\to H\oplus H$ not depending of $A, B$ such that $W_m^*\left(\begin{array}{cc} A & \\ &B\end{array}\right)W_m\to A$ $\ast$-strongly (this result is described in detail in [\cite{CourtneyShulman}, Lemma 2.4]). Hence for an appropriately chosen $m(n)$, $W_{m(n)}^*U_nW_{m(n)}$ and $W_{m(n)}^*V_nW_{m(n)}$ converge $\ast$-strongly to U and V respectively.
\end{proof}

\begin{remark} In \cite{Exel}  Exel conjectured that a pair  of unitary matrices $u = \oplus u_j$ and $v = \oplus v_j$, with each pair $u_j, v_j$ being irreducible,  is a local minimum of commutators if and only if one has that $u_jv_ju_j^{-1}v_j^{-1}$
is a scalar, for some value of $j$ for which
$\|u_jv_j-v_ju_j \| = \|uv-vu\|$.  A counterexample to the conjecture was constructed by Manuilov  \cite{Manuilov}. Theorem \ref{dilations} gives different counterexamples: any pair of unitary matrices of the form $u= U\oplus (-U^*)$ and $v= V\oplus V$ with $UVU^{-1}V^{-1}$ being scalar is a counterexample.
\end{remark}

\subsection{Very flexible stability}

The  notion of very flexible stability of groups was introduced by Becker and Lyubotzky  \cite{BeckerLyubotzky}. Very flexible stability of C*-algebras is defined similarly and the definitions agree in the sense that $C^*(G)$ is very flexibly stable if and only if $G$ is very flexibly stable.

\medskip

For any Hilbert space $H$, infinite-dimensional or finite-dimensional, with a fixed o.n.b. $\{e_i\}$, the projection onto the span of the first $n$ basis vectors will be denoted by $P_n$. We identify $B(H)$, for $n$-dimensional $H$, with $M_n$.

Let A  be a  C*-algebra. A sequence of maps   $\phi_n: A\to M_{k_n}$, $n\in \mathbb N$, is an {\it approximate representation } with respect to a given norm $\|\;\|$ on matrices, if $$\lim_{n\to \infty}\|\phi_n(ab) - \phi_n(a)\phi_n(b)\| =0,$$ for any $a, b\in A$, and  $$\lim_{n\to \infty}\|\phi_n(\lambda a +\mu b) - \lambda \phi_n(a)- \mu \phi_n(b)\| =0,$$ for any $a, b\in A, \lambda, \mu \in \mathbb C$.

\begin{definition}  A C*-algebra $A$ is {\it very flexibly HS-stable}  if for any approximate representation $\phi_n: A\to M_{k_n}$, $n\in \mathbb N$,  there exist $m_n\ge k_n$ and representations $\rho_n: A \to M_{m_n}$, $n\in \mathbb N$, such that
$$  \|\phi_n(a) - P_{k_n}\rho_n(a)P_{k_n}\|_2 \to 0,  \;\text{ as}\; n\to \infty,$$ for any $a\in A$.
\end{definition}

Let $\omega$  be any non-trivial ultrafilter on $\mathbb N$ and let $\prod_{\omega} (M_{k_n}, \|\|_2)$ be the ultraproduct $C^*$-algebra
$$\prod_{\omega} (M_{k_n}, \|\|_2) = \prod M_{k_n} / \{(T_n)\;|\; \|T_n\|_2 \to_{\omega} 0\}.$$

Very flexible stability can be reformulated as the following lifting property.

\begin{definition} A C*-algebra $A$  is very flexibly Hilbert-Schmidt stable if for any $\ast$-homomorphism $f: A \to \prod_{\omega} (M_{k_n}, \|\|_2)$ there is a $\ast$-homomorphism $g: A \to \prod_{\omega} (M_{m_n}, \|\|_2)$ such that $f\oplus g$ lifts to a $\ast$ homomorphism to $\prod M_{k_n+m_n}$.

A C*-algebra $A$  is very flexibly operator norm  stable if for any $\ast$-homomorphism $f: A \to \prod M_{k_n}/\oplus M_{k_n}$ there is a $\ast$-homomorphism $g: A \to \prod M_{m_n}/\oplus M_{m_n}$ such that $f\oplus g$ lifts to a $\ast$ homomorphism to $\prod M_{k_n+m_n}$.

\end{definition}

The same arguments as in the proof of     Theorem \ref{SoftTorusthe LP} show

\begin{proposition}\label{SoftTorusVeryFlexiblyStable} Soft torus is very flexibly Hilbert-Schmidt stable and very flexibly operator norm stable.
\end{proposition}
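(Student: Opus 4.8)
The plan is to mimic the proof of Theorem \ref{SoftTorusthe LP} in the two new settings, replacing the multiplier algebra $M(B/I\otimes K)$ by the appropriate ultraproduct or quotient and checking that the three ingredients used there remain available. Recall that the proof of Theorem \ref{SoftTorusthe LP} rests on three facts: (a) the characterization of the LP via lifting $\ast$-homomorphisms (here the analogous characterization is the \emph{definition} of very flexible stability in the reformulated form, which we are free to verify directly); (b) every unitary in the target lifts to a unitary upstairs; and (c) Arveson's theorem that a pointwise limit of liftable ccp maps is liftable, applied to the family $f_t \to f\oplus g$ from Construction \ref{construction}. So first I would fix a $\ast$-homomorphism $f\colon C(\mathbb T^2)_\epsilon \to \prod_\omega(M_{k_n},\|\cdot\|_2)$ in the Hilbert--Schmidt case (respectively $f\colon C(\mathbb T^2)_\epsilon \to \prod M_{k_n}/\oplus M_{k_n}$ in the operator-norm case), set $U=f(u)$, $V=f(v)$, and define $g$ by $g(u)=-U^*$, $g(v)=V$ exactly as in Construction \ref{construction}, using Lemma \ref{commutator} to see $g$ is a well-defined $\ast$-homomorphism.

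The key step is to establish the liftability of $f\oplus g$. As in Theorem \ref{SoftTorusthe LP}, I would note that $f\oplus g$ is the pointwise limit of the $\ast$-homomorphisms $f_t$ from Construction \ref{construction} as $t\to 1$ (equation (\ref{eq0})), and then show each $f_t$ with $t<1$ lifts. For this I invoke Lemma \ref{FormerClaim}, whose only hypothesis is that $f(v)=V$ admits a \emph{unitary} lift. In the operator-norm quotient $\prod M_{k_n}/\oplus M_{k_n}$ a unitary lifts to a unitary by writing $V=e^{iS}$, lifting the self-adjoint $S$ coordinatewise (noting each $M_{k_n}$ has connected unitary group), and exponentiating; the same exponential trick works in $\prod_\omega(M_{k_n},\|\cdot\|_2)$. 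Once each $f_t$ lifts, I would apply the ultraproduct/quotient analogue of Arveson's result: a $\|\cdot\|_2$-pointwise (resp.\ norm-pointwise) limit of liftable ccp maps is liftable. This yields that $f\oplus g$ lifts to a ccp map into $\prod M_{k_n+m_n}$, which by the Stinespring/dilation argument of Theorem \ref{Characterization}, (iii)$\Rightarrow$(i) upgrades to the required $\ast$-homomorphic lift in the definition of very flexible stability.

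The main obstacle I anticipate is establishing a correct analogue of Arveson's closedness theorem in the Hilbert--Schmidt setting, where the relevant topology on maps into $\prod_\omega(M_{k_n},\|\cdot\|_2)$ is the $\|\cdot\|_2$-topology rather than the operator norm. The classical Arveson argument extracts a liftable limit by a diagonal/compactness procedure built for the operator-norm topology; one must check that the Hilbert--Schmidt (tracial $2$-norm) version, with the extra freedom to enlarge $k_n$ to $m_n$, still permits the same extraction of a lift, and that passing to the ultrafilter limit does not destroy the approximate multiplicativity needed. Here the flexibility in the definition (allowing $g$ to land in a possibly larger ultraproduct $\prod_\omega(M_{m_n},\|\cdot\|_2)$ and permitting $f\oplus g$ rather than $f$ itself to lift) is precisely what makes the argument go through, just as the summand $g$ was essential in Theorem \ref{SoftTorusthe LP}. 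The operator-norm case is closer to the setting of Theorem \ref{SoftTorusthe LP} and should follow with only cosmetic changes, so I expect almost all of the work to be in verifying the Hilbert--Schmidt compactness/limit step.
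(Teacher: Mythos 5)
Your skeleton is the right one and is exactly what the paper's one-line proof intends: transplant Construction \ref{construction} and Lemma \ref{FormerClaim} to the targets $\prod M_{k_n}/\oplus M_{k_n}$ and $\prod_{\omega}(M_{k_n},\|\cdot\|_2)$, check that unitaries lift (they do: lift $V$ to a contraction $(T_n)$ and replace $T_n$ by the unitary in its polar decomposition, which works since $T_n^*T_nT_n \to T_n$ in the relevant norm; your ``write $V=e^{iS}$'' version is slightly circular, since surjectivity of the exponential onto the unitary group of the quotient is essentially what is being claimed), and note that both ideals admit quasicentral approximate units so that Lemma \ref{FormerClaim} applies verbatim and each $f_t$, $t<1$, lifts to a \emph{pair of unitaries} in $\prod M_{2k_n}$ with coordinatewise commutator norm at most $\epsilon$.

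The genuine gap is in your last step. The definition of very flexible (HS or operator-norm) stability demands that $f\oplus g$ lift to a $\ast$-\emph{homomorphism} into $\prod M_{k_n+m_n}$, and your route --- Arveson-type closedness to get a ccp lift, then ``upgrade'' via the argument of Theorem \ref{Characterization} --- cannot deliver that. The implication (iii)$\Rightarrow$(i) there produces ccp lifts of ccp maps, not homomorphic lifts; the dilation direction (i)$\Rightarrow$(ii) uses Kasparov--Stinespring and lands in $M(B\otimes K)$, which is not a product of matrix algebras, so there is no analogue of Corollary \ref{CharacterizationLP} available in this setting to convert a ccp lift into the required homomorphic one. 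The correct (and simpler) finish is to bypass Arveson entirely: since $C(\mathbb T^2)_\epsilon$ is generated by $u,v$ and $f_t(u)\to (f\oplus g)(u)$, $f_t(v)\to (f\oplus g)(v)$ in norm as $t\to 1$, choose $t_j\uparrow 1$, take the coordinatewise unitary lifts $(\tilde U_{t_j}^{(n)},\tilde V_{t_j}^{(n)})_n$ provided by Lemma \ref{FormerClaim}, and splice them by the standard reindexing/diagonal argument for sequence algebras (respectively, the $\omega$-version for the ultraproduct): on a suitable tail $n\ge N_j$ (resp.\ an $\omega$-large set) the $j$-th lift is within $2^{-j}$ of a fixed lift of $(f\oplus g)(u)$ and $(f\oplus g)(v)$, and the resulting single sequence of unitary pairs, having commutator norm at most $\epsilon$ in every coordinate, defines a $\ast$-homomorphic lift of $f\oplus g$ into $\prod M_{2k_n}$. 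This also dissolves the difficulty you flag about a Hilbert--Schmidt analogue of Arveson's theorem: no such analogue is needed.
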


\noindent Of course the actual stability, that is not flexible one, is particularly interesting. In \cite{SoftTorusNotSP} it is proved that the soft torus is not operator norm stable (and therefore not semiprojective).
Whether the soft torus is Hilbert-Schmidt stable is not known.

\medskip

We already know from the proof of Theorem \ref{FnxFn} that the softening of $C^*(F_n\times F_n)$
$$ C^*\langle u_i, v_i\;|\;  u_i \; \text{and} \; v_i \; \text{are unitaries}, \;  i= 1, \ldots, n, \; \|[u_i, v_j]\| \le \epsilon,\forall  i, j = 1, \ldots, n \rangle,$$ which will be denoted by $C^*(F_n\times F_n)_{\epsilon}$,    has the LP.
The same arguments as in the proofs of Theorem \ref{SoftTorusRFD}, Theorem \ref{ContinuousField}, Proposition \ref{SoftTorusVeryFlexiblyStable} show that it also shares the other properties of soft tori.

\begin{proposition}\label{OtherPropertiesOfFnxFn} Let $n\in \mathbb N$. Then
\begin{enumerate}

\item  $C^*(F_n\times F_n)_{\epsilon}$ is RFD,  very flexibly Hilbert-Schmidt stable, very flexibly operator norm stable, and has the LP,

\medskip

\item  There exists a continuous field of C*-algebras over the interval $[0,2]$ such
that $C^*(F_n\times F_n)_{\epsilon}$ is the fiber over $\epsilon$ and such that
$\epsilon\in [0, 2] \mapsto  \phi_{\epsilon}(a)\in C^*(F_n\times F_n)_{\epsilon}$
is a continuous section for every $a \in C^*(F_{2n})$.

\noindent (Here $\phi_{\epsilon}: C^*(F_{2n}) \to C^*(F_n\times F_n)_{\epsilon}$ is the canonical surjection).
\end{enumerate}
\end{proposition}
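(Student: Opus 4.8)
The plan is to carry over, essentially verbatim, the proofs of Theorem \ref{SoftTorusRFD}, Theorem \ref{ContinuousField} and Proposition \ref{SoftTorusVeryFlexiblyStable}, replacing the single pair $(u,v)$ by the $n$ pairs $(u_i,v_i)$ and replacing Lemma \ref{FormerClaim} by its multivariable form recorded in Remark \ref{RemarkCommutators}. That $C^*(F_n\times F_n)_\epsilon$ has the LP was already shown in the proof of Theorem \ref{FnxFn}, so only the RFD property, the two very flexible stabilities, and the continuous field remain.

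The common device is the $n$-generator version of Construction \ref{construction}: given a $\ast$-homomorphism $f$ with $f(u_i)=U_i$, $f(v_i)=V_i$, I would set $g(u_i)=-U_i^*$, $g(v_i)=V_i$ and, for $t<1$, form the $2\times 2$ matrices $U_{t,i}$, $V_{t,i}$ of Construction \ref{construction} out of each pair. The only point needing attention beyond the one-generator case is the cross commutators $[U_{t,i},V_{t,j}]$ with $i\neq j$: these are block-diagonal with entries $t[U_i,V_j]$ and $-t[U_i^*,V_j]$, so Lemma \ref{commutator} bounds their norm by $t\epsilon<\epsilon$. Hence the $(U_{t,i},V_{t,i})$ determine a $\ast$-homomorphism $f_t$ of $C^*(F_n\times F_n)_\epsilon$ with $f_t\to f\oplus g$ pointwise as $t\to1$, and Remark \ref{RemarkCommutators} furnishes, from unitary lifts of the $V_i$, simultaneous unitary lifts of all the $U_{t,i}$ keeping every commutator norm $\le\epsilon$.

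Granting this, the three remaining assertions go through just as for the soft torus. For RFD I would embed $C^*(F_n\times F_n)_\epsilon$ into $B(H)$, lift each $V_i=e^{iS_i}$ to a unitary in the algebra $\mathcal D$ of $\ast$-strongly convergent sequences of matrices, use Remark \ref{RemarkCommutators} to lift each $f_t$ ($t<1$) to a sequence of finite-dimensional representations, and observe that $\{f_t\}_{t<1}$ separates points since $f$ is faithful. For the two very flexible stabilities I would repeat the proof of Proposition \ref{SoftTorusVeryFlexiblyStable} in the ultraproducts $\prod_\omega(M_{k_n},\|\cdot\|_2)$ and $\prod M_{k_n}/\oplus M_{k_n}$, where again every unitary lifts via $U=e^{iS}$. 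For the continuous field I would reuse Lemma \ref{Lemma2ContinuousField} unchanged and reprove Lemma \ref{Lemma1ContinuousField} by taking a norming representation $\pi$ of $C^*(F_n\times F_n)_{\epsilon_0}$, setting $\sigma(u_i)=\pi(u_i)$, $\sigma(v_i)=-\pi(v_i^*)$ (the construction softened in the $v$-variable), and writing $\pi\oplus\sigma$ as a pointwise limit of representations $\rho_m$ with $\max_{i,j}\|[\rho_m(u_i),\rho_m(v_j)]\|<\epsilon_0$ tending to $\epsilon_0$; factoring each $\rho_m$ through $C^*(F_n\times F_n)_{\epsilon_m}$ and passing to the limit yields $\lim_{\epsilon\to\epsilon_0^-}\|\phi_\epsilon(a)\|\ge\|\phi_{\epsilon_0}(a)\|$, whence Dixmier's criterion produces the field.

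The step I expect to require the most care, and the only genuine departure from the one-generator proofs, is the control of the off-diagonal commutators $[U_{t,i},V_{t,j}]$ with $i\neq j$, which simply do not occur in the soft-torus arguments; once these are estimated as above via Lemma \ref{commutator}, every norm estimate and every lifting in the three proofs is formally identical to its soft-torus counterpart.
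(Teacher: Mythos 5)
Your proposal is correct and follows exactly the route the paper takes: the paper's proof of this proposition consists precisely of invoking Remark \ref{RemarkCommutators} (which records the multivariable cross-commutator estimate you work out via Lemma \ref{commutator}) together with the statement that the arguments of Theorem \ref{SoftTorusRFD}, Theorem \ref{ContinuousField} and Proposition \ref{SoftTorusVeryFlexiblyStable} carry over verbatim. Your identification of the off-diagonal commutators $[U_{t,i},V_{t,j}]$, $i\neq j$, as the only new point, and your block-diagonal estimate for them, is exactly the content the paper delegates to that remark.
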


\section{ Ext being a group versus the LLP}

It is known that the LLP implies that Ext is a group and it is not known whether they are equivalent (\cite[p.12]{Ozawa}). In this section we prove that for an interesting class of C*-algebras, including $C^*(F_2\times F_2)$ and all contractible C*-algebras, they are equivalent.

Below for an inductive limit $A =\varinjlim P_n$  we denote the connecting maps by  $\theta_{n, m}: P_n \to P_m$ and $\theta_{n, \infty}: P_n \to A$.

The following two lemmas are similar to \cite[Lemmas 13.4.4, 13.4.5]{BO}.

\begin{lemma}\label{ExtLemma1} Let $A = \varinjlim P_n$,  where all $P_n$ have the LP and all the connecting maps are surjective.   Let $J = \ker \theta_{0, \infty}$.  Then for any C*-algebra $D$, in the  diagram

$$ \begin{tikzcd} P_0 \otimes D \arrow{r}{\psi\otimes id} \arrow{d} & \left(\prod P_n\right)\otimes D \arrow{d}  \\ \left(P_0\otimes D\right) / \left(J \otimes D\right)  \arrow[r, dashed, "\theta"] & \frac{ (\prod P_n)\otimes D}{ (\oplus P_n)\otimes D}
 \end{tikzcd}$$

\noindent the induced $\ast$-homomorphism $\theta$  is isometric. Here $\psi = \prod_n \theta_{0, n}$ and   $\otimes = \otimes_{min}$.
\end{lemma}

\begin{proof} Since $\psi(J) \subset \oplus P_n$, $\theta$ is well-defined.
    Since each $P_n$ has the LP, there is a ccp lift $s_n: P_n\to P_0$ of the connecting homomorphism $\theta_{0, n}$.
    Let $\sum_{i=1}^N b_i\otimes d_i + J\otimes D \in \left(P_0\otimes D\right) / \left(J\otimes D\right)$. Since $\ker \theta_{0, n}\subseteq J$ and
    $s(\theta_{0, n}(b_i))-b_i\in \ker \theta_{0, n}$, we have
    \begin{multline*} \|\sum_{i=1}^N b_i\otimes d_i + J \otimes D\| \le \|\sum_{i=1}^N b_i\otimes d_i + \ker \theta_{0, n} \otimes D\|
   \\ \le  \|\sum_{i=1}^N s(\theta_{0, N}(b_i))\otimes d_i + \ker \theta_{0, n}\otimes D\|
   \\ \le \|\sum_{i=1}^N s(\theta_{0, N}(b_i))\otimes d_i \| = \|(s\otimes id_D)(\sum_{i=1}^N \theta_{0, n}(b_i)\otimes d_i)\|
    \le \|\sum_{i=1}^N \theta_{0, n}(b_i)\otimes d_i\| .\end{multline*}
    (where to obtain  the last equality we used that $s\otimes id_D$ is well-defined and contractive \cite[Th. 3.5.3]{BO}). It follows that
    $$ \|\sum_{i=1}^N b_i\otimes d_i + J \otimes D\| \le \limsup_n \|\sum_{i=1}^N \theta_{0, n}(b_i)\otimes d_i\|.$$

    On the other hand
    \begin{multline*}\theta\left(\sum_{i=1}^N b_i\otimes d_i + J\otimes D\right)\| = \|\sum_{i=1}^N \left(\theta_{0, n}(b_i)\right)_{n\in \mathbb N} \otimes d_i +
    (\oplus P_n)\otimes D\| \\ = \limsup_n \|\sum_{i=1}^N \theta_{0, n}(b_i)\otimes d_i\| \end{multline*}
    (here we used the well-known and not difficult fact that there are isometric embeddings $(\prod A_n)\otimes D \hookrightarrow \prod (A_n\otimes D)$ and
    $(\oplus A_n)\otimes D \hookrightarrow \oplus (A_n\otimes D)$).

    Therefore $\theta$ is an isometry.
\end{proof}

\begin{lemma}\label{ExtLemma2} Let $A = \varinjlim P_n$, where all $P_n$ are separable RFD with the LP and all the connecting maps are surjective.  Let $\theta$ and $\psi$ be as in Lemma \ref{ExtLemma1}. Then there exist finite-dimensional representations $\sigma_n: P_n \to M_{k_n}$ such that for
    any separable C*-algebra $D$,     in the following diagram
    $$ \begin{tikzcd} P_0 \otimes D \arrow{r}{\psi\otimes id} \arrow{d} & \left(\prod P_n\right) \otimes D \arrow{d}  \arrow{r}{\left(\prod \sigma_n\right)\otimes id} & \;\;\; \left(\prod M_{k_n}\right) \otimes D
\arrow{d}\\ \left(P_0\otimes D\right) / \left(J \otimes D\right)  \arrow{r}{\theta}  \arrow[bend right=30, dashed, swap]{rr}{\pi}& \frac{ (\prod P_n)\otimes D}{ (\oplus P_n)\otimes D}  \arrow[r, dashed] & \frac{\left(\prod M_{k_n}\right) \otimes D}{\left(\oplus M_{k_n}\right) \otimes D}
 \end{tikzcd}$$
    the induced $\ast$-homomorphism $\pi$ is isometric.
\end{lemma}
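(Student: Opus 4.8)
The plan is to reduce the statement to an approximation property of finite-dimensional representations of the single algebra $P_0$ and then to produce the required sequence by a diagonal argument. Write $\tau_n := \sigma_n\circ\theta_{0,n}\colon P_0\to M_{k_n}$; since $\psi=\prod_n\theta_{0,n}$, the composite $(\prod\sigma_n)\circ(\psi\otimes \mathrm{id})$ sends $\sum_i b_i\otimes d_i$ to $\sum_i(\tau_n(b_i))_n\otimes d_i$, so $\pi$ is the map $\sum_i b_i\otimes d_i + J\otimes D\mapsto \sum_i(\tau_n(b_i))_n\otimes d_i + (\oplus M_{k_n})\otimes D$. First I would check that $\pi$ is well defined: for $b\in\bigcup_m\ker\theta_{0,m}$ one has $\theta_{0,n}(b)=0$ for all large $n$, hence $(\tau_n(b))_n\in\oplus M_{k_n}$, and the general case $b\in J$ follows since $\|\tau_n(b)\|\le\|\theta_{0,n}(b)\|\to\|\theta_{0,\infty}(b)\|=0$. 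Using $A=P_0/J=\varinjlim P_n$, together with the fact that min tensor products commute with these quotients and limits, the norm to be matched is $\|\sum_i b_i\otimes d_i+J\otimes D\| = \|\sum_i\theta_{0,\infty}(b_i)\otimes d_i\|_{A\otimes D} = \lim_n\|\sum_i\theta_{0,n}(b_i)\otimes d_i\|_{P_n\otimes D}$, the last being a \emph{decreasing} limit because the connecting maps are surjective (so each $\theta_{m,n}\otimes \mathrm{id}$ is contractive). Since each $\sigma_n$ is a $\ast$-homomorphism, $\sigma_n\otimes \mathrm{id}$ is contractive, so $\|\pi(x)\|=\limsup_n\|\sum_i\tau_n(b_i)\otimes d_i\|$ is automatically at most this limit; only the reverse inequality, i.e. that the chosen $\sigma_n$ asymptotically recover the norm, requires work.

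The key input is that, because $P_n$ is RFD, its finite-dimensional representations are \emph{completely norming}: if $\{\rho_\alpha\}$ is a separating family of finite-dimensional representations then $\bigoplus_\alpha\rho_\alpha$ is faithful, hence completely isometric, and taking finite subsums shows that for every finite-dimensional operator subspace $E\subset P_n$ and every $\delta>0$ there is a \emph{single} finite-dimensional representation $\sigma$ of $P_n$ whose restriction to $E$ is completely $\delta$-isometric. Phrasing this at the operator-space level is what lets one handle all $D$ at once: representing $D\hookrightarrow B(H)$ faithfully, the value of $\|\sum_i a_i\otimes d_i\|$ with $a_i\in E$ depends only on the operator-space structure of $E$, so a completely $\delta$-isometric $\sigma|_E$ forces $\|\sum_i\sigma(a_i)\otimes d_i\|$ and $\|\sum_i a_i\otimes d_i\|$ to differ by at most a constant multiple of $\delta$, uniformly over all separable $D$ and all $d_i$ in the unit ball. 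I would also record that as $n\to\infty$ the operator-space structure of $\theta_{0,n}(E)\subset P_n$ converges to that of $\theta_{0,\infty}(E)\subset A$, again because the connecting maps are surjective and min tensor products pass to the limit.

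With these tools I would run the diagonal construction. Fix a sequence $\{b_1,b_2,\dots\}$ dense in $P_0$ and set $E_m:=\mathrm{span}\{b_1,\dots,b_m\}$. For each $n$ choose a finite-dimensional representation $\sigma_n\colon P_n\to M_{k_n}$ whose restriction to $\theta_{0,n}(E_n)$ is completely $(1/n)$-isometric onto its image, which is possible by the RFD norming property applied inside $P_n$. For a fixed finite tuple drawn from some $E_m$ and a fixed separable $D$: once $n\ge m$ the representation $\sigma_n$ is completely $(1/n)$-isometric on $\theta_{0,n}(E_m)$, while for large $n$ the structure of $\theta_{0,n}(E_m)$ is close to that of $\theta_{0,\infty}(E_m)$. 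Combining the two uniform estimates gives $\|\sum_i\tau_n(b_i)\otimes d_i\|\ge\|\sum_i\theta_{0,\infty}(b_i)\otimes d_i\|_{A\otimes D}-o(1)$, so $\limsup_n$ is at least the target norm. Density of $\{b_j\}$ and continuity of all norms involved then upgrade this from the $E_m$ to all of $P_0$. Together with the automatic upper bound, this yields $\|\pi(x)\|=\|x\|$ for every $x$ and every separable $D$.

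The step I expect to be the main obstacle is securing uniformity over all separable $D$ simultaneously with one sequence $\sigma_n$: a naive diagonalization would have to enumerate the elements $d_i$, which is impossible since they range over arbitrary separable algebras. This is precisely what the operator-space formulation circumvents, reducing norm preservation of $\sum_i a_i\otimes d_i$ to complete $\delta$-isometry of $\sigma_n$ on the finite-dimensional subspaces $\theta_{0,n}(E_m)$, a condition internal to $P_n$ that makes no reference to $D$. The remaining care is the passage from complete isometry (an all-levels condition) to something achievable by a single finite-dimensional representation on a finite-dimensional subspace, for which I would invoke the standard compactness argument for finite-dimensional operator spaces.
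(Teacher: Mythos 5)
Your reduction is sound up to the point where you invoke the ``completely norming'' property of finite-dimensional representations of an RFD algebra, but that key input is false, and it fails precisely for the algebras this lemma is applied to. A finite-dimensional representation $\sigma$ of $P_n$ whose restriction to a finite-dimensional subspace $E$ is completely $\delta$-isometric exhibits $E$ as $(1-\delta)^{-1}$-completely isomorphic to a subspace of a matrix algebra; if every finite-dimensional $E\subset P_n$ admitted such a $\sigma$ for every $\delta>0$, then $P_n$ would be exact (by the Kirchberg--Pisier characterization of exactness via almost completely isometric embeddings of finite-dimensional subspaces into matrix algebras). RFD C*-algebras need not be exact: $C^*(F_2)$ is the standard example, and in this paper the lemma is applied with $P_0=C^*(F_\infty)$ and with $P_n$ the softened algebras $C^*(F_m\times F_m)_{\epsilon_n}$, all of which contain $C^*(F_2)$. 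Concretely, for $E=\operatorname{span}\{u_1,\dots,u_m\}\subset C^*(F_m)$ the exactness constant grows like $\sqrt m$ (Pisier), so for small $\delta$ no finite-dimensional representation whatsoever is completely $\delta$-isometric on $E$. The ``standard compactness argument'' you defer to at the end does give, for each fixed matrix level $k$, a finite subsum of the separating family that is $\delta$-isometric on $M_k(E)$, but the subsum depends on $k$ and cannot be chosen uniformly in $k$ --- that uniformity is exactly exactness. Since your entire mechanism for obtaining uniformity over $D$ rests on this claim, the construction in your third paragraph cannot be carried out.

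For comparison, the paper does not attempt any operator-space-level uniformity. It fixes $D$, takes a countable dense set of finite sums $\sum_l b_l^{(i)}\otimes d_l^{(i)}$ in $P_0\otimes D$, uses the definition of the minimal tensor norm to choose representations $\sigma$ of $P_n$ and $\rho$ of $D$ nearly attaining the first $n$ of these norms, and then invokes the Exel--Loring theorem to replace $\sigma$ by finite-dimensional representations converging to it $\ast$-strongly; $\ast$-strong convergence preserves lower bounds on the norms of the finitely many fixed elements $\sum_l\sigma(\theta_{0,n}(b_l^{(i)}))\otimes\rho(d_l^{(i)})$, and no complete isometry is needed. (As written, the paper's $\sigma_n$ then depend on the dense set chosen in $P_0\otimes D$; you are right that uniformity in $D$ is the delicate point of the statement as quantified, but the route you propose for securing it is blocked by non-exactness.) A separate, smaller issue: your identification $\|\sum_i b_i\otimes d_i+J\otimes D\|=\|\sum_i\theta_{0,\infty}(b_i)\otimes d_i\|_{A\otimes D}$ presumes $(P_0\otimes D)/(J\otimes D)\cong A\otimes_{\min}D$, which is not automatic for the minimal tensor product; the correct justification that the target norm is $\lim_n\|\sum_i\theta_{0,n}(b_i)\otimes d_i\|$ is Lemma \ref{ExtLemma1} itself, which uses the LP of the $P_n$.
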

\begin{proof} Let $\{\sum_{l=1}^{N_i} b_l^{(i)} \otimes d_l^{(i)}, i\in \mathbb N\}$ be a dense subset in $P_0\otimes D$.

    \medskip

    {\it Claim:}  For $n\in \mathbb N$, there exists a finite-dimensional representation $\sigma_n$ of $P_n$ such that
    $$\| \sum_{l=1}^{N_i} \theta_{0, n}(b_l^{(i)}) \otimes d_l^{(i)} \| \le
    \| \sum_{l=1}^{N_i} \sigma_n(\theta_{0, n}(b_l^{(i)}))\otimes d_l^{(i)} \| + \frac{1}{n}, $$ for $i= 1, \ldots, n$.

\medskip

    {\it Proof of Claim:} By definition of the minimal tensor product, there are a representation $\sigma$ of $P_n$ and a representation
    $\rho$ of $D$ such that
    \begin{equation}\label{Lemma2Equation1}\| \sum_{l=1}^{N_i} \theta_{0, n}(b_l^{(i)}) \otimes d_l^{(i)} \| \le \| \sum_{l=1}^{N_i} \sigma(\theta_{0, n}(b_l^{(i)})) \otimes
    \rho(d_l^{(i)}) \|  + \frac{1}{2n}, \end{equation} for $i=1, \ldots, n$.
    Since $P_n$ is RFD, by Exel-Loring theorem (\cite{ExelLoring}) there exist representations $\sigma_k$ of $P_n$ living on finite-dimensional subspaces of
    the space of $\sigma$ such that $\sigma_k\to \sigma$ in the $\ast$-strong operator topology pointwisely.   Then $\sigma_k\otimes \rho \to \sigma\otimes \rho$  in the $\ast$-strong operator topology
     pointwisely. Hence there is $k_n$ such that for $\sigma_{k_n}$, which we will denote by $\sigma_n$ for short, we have

     \begin{equation}\label{Lemma2Equation2}\| \sum_{l=1}^{N_i} \sigma(\theta_{0, n}(b_l^{(i)})) \otimes
     \rho(d_l^{(i)}) \|   \le \| \sum_{l=1}^{N_i} \sigma_n(\theta_{0, n}(b_l^{(i)})) \otimes
     \rho(d_l^{(i)}) \|  + \frac{1}{2n},\end{equation} for $i= 1, \ldots, n$.
     From (1) and (2) we obtain
     \begin{multline*}\| \sum_{l=1}^{N_i} \theta_{0, n}(b_l^{(i)}) \otimes d_l^{(i)} \| \le \| \sum_{l=1}^{N_i} \sigma_n(\theta_{0, n}(b_l^{(i)}))
      \otimes \rho(d_l^{(i)}) \| + \frac{1}{n} \\ =
      \|(id_{\sigma_n(P_n)}\otimes \rho)\left( \sum_{l=1}^{N_i} \sigma_n(\theta_{0, n}(b_l^{(i)}))\otimes d_l^{(i)} \right)\| +\frac{1}{n}
      \le \| \sum_{l=1}^{N_i} \sigma_n(\theta_{0, n}(b_l^{(i)}) \otimes d_l^{(i)}) \|  + \frac{1}{n}, \end{multline*}
      when $i = 1, \ldots, n$. Claim is proved.

      \medskip
      \noindent       Then
      \begin{multline*}\|\pi(\sum_{l=1}^{N_i} b_l^{(i)} \otimes d_l^{(i)}) +\oplus J\otimes D\| = \|  \sum_{l=1}^{N_i} \left(\sigma_n\left(\theta_{0, n}(b_l^{(i)})\right)\right)_{n\in \mathbb N} \otimes d_l^{(i)}
      + \left(\oplus M_{k_n}\right) \otimes D\| \\ = \limsup_n \| \sum_{l=1}^{N_i} (\sigma_n\theta_{0, n})(b_l^{(i)}) \otimes d_l^{(i)} \|  \stackrel{Claim}{=}
      \limsup_n  \| \sum_{l=1}^{N_i} \theta_{0, n}(b_l^{(i)}) \otimes d_l^{(i)} \| \\ =
      \|\sum_{l=1}^{N_i} \left(\theta_{0, n}(b_l^{(i)})\right)_{n\in \mathbb N} \otimes d_l^{(i)} + \left(\oplus P_n\right)\otimes D\|
= \|\theta\left(\sum_{l=1}^{N_i} b_l^{(i)} \otimes d_l^{(i)} + J\otimes D\right) \| \\ \stackrel{Lemma \; \ref{ExtLemma1}}{=} \|\sum_{l=1}^{N_i} b_l^{(i)} \otimes d_l^{(i)} + J\otimes D\|.\end{multline*}
\end{proof}

\begin{theorem}\label{LLPvsExt} Suppose $A$ is an inductive limit, with surjective connecting maps, of separable C*-algebras that are RFD and have the LP. Then $A$ has the LLP if and only if $Ext(A)$ is a group.
\end{theorem}
\begin{proof} The  "only if"  is known, so we assume that $Ext(A)$ is a group.
We write $A = \varinjlim P_n$, where all $P_n$ are separable RFD with the LP and all the connecting maps are surjective.

\noindent Let $k_n$ be as in Lemma \ref{ExtLemma2}. Embed $\prod M_{k_n} $ into $B(H)$ diagonally, then $\oplus M_{k_n} \subset K(H)$  and  $\prod M_{k_n}/\oplus M_{k_n} \subset B(H)/K(H)$. The latter embedding will be denoted by $j$.
Using Lemma \ref{ExtLemma1} and Lemma \ref{ExtLemma2} we obtain an embedding
$$ A \cong P_0/J \to \prod P_n/\oplus P_n \to \prod M_{k_n}/\oplus M_{k_n}$$ which we will denote by $\gamma$. Then $j\circ \gamma: A \to B(H)/K(H)$ is an extension. Since $Ext(A)$ is a group, $j\circ \gamma$
 lifts to a completely positive map $\tau: A \to B(H)$. Let $E: B(H) \to \prod M_{k_n}$ be the canonical expectation (that is the one that sends an operator to its block-diagonal). Then $E\circ \tau$ is a ccp lift of $\gamma$.

\noindent Let $q: \prod M_{k_n} \to \prod M_{k_n}/\oplus M_{k_n}$ be the canonical surjection and  let \begin{equation}\label{Ext1} B= q^{-1}(\gamma(A)).\end{equation}   Then $E\circ \tau$   must have range in $B$. Thus $\gamma: A\to B/\oplus M_{k_n} $ is liftable, hence locally liftable. By Effros-Haagerup Theorem  \cite[Th. 13.1.6]{BO}, for any C*-algebra $D$ we have
\begin{equation}\label{ExtCanonically} A\otimes D =   (B/\oplus M_{k_n})\otimes D \cong(B\otimes D) /(\oplus M_{k_n} \otimes D)\end{equation} canonically.
Let $\pi: (P_0\otimes D) / (J\otimes D)  \to \left( \prod M_{k_n}\otimes D\right) /\left(\oplus M_{k_n} \otimes D\right)$  be the embedding constructed in Lemma \ref{ExtLemma2}. By (\ref{Ext1}) it has range in $\left( B\otimes D\right) /\left(\oplus M_{k_n} \otimes D\right)$. Let $A\odot D$ denote the algebraic tensor product of $A$ and $D$.  We obtain the embedding
\begin{equation}\label{Ext4} A\odot D \subseteq (P_0\otimes D) / (J\otimes D) \stackrel{\pi} {\to}  \left( B\otimes D\right) /\left(\oplus M_{k_n} \otimes D\right) \stackrel{(\ref{ExtCanonically})}{\cong} A\otimes D \end{equation}
such that
\begin{equation}\label{Ext3}a\odot d \mapsto \gamma(a)\otimes d,\end{equation} for $a\in A, d\in D$. Let $\|\;\|_{ind}$ be the C*-norm induced by this embedding on $A\odot D$. Then  by (\ref{Ext3})
\begin{multline*}\|\sum_{i=1}^n a_i\odot d_i\|_{ind} = \|\sum_{i=1}^n \gamma(a_i)\otimes d_i\|_{min} = \|\left(\gamma\otimes id\right)\left(\sum_{i=1}^na_i\otimes d_i\right)\|_{min} \\ \le  \|\sum_{i=1}^na_i\otimes d_i\|_{min}. \end{multline*}
By minimality of the minimal tensor product, the induced norm on $A\odot D$ is the minimal one. Therefore, by (\ref{Ext4}),
$$ (P_0\otimes D) / (J\otimes D) \cong A\otimes D,$$ for any $D$.
Again by Effros-Haagerup Theorem we conclude that the identity map on $A = P_0/J$ is locally liftable. Together with the fact that  $P_0$ has the LLP it implies easily that $A$ has the LLP. Indeed, let $\phi: A \to C/I$ be a cp map and let $E\subset A$ be a finite-dimensional operator system.  Let $\psi_0: P_0\to A$ be the canonical surjection. Let $\delta_1: E\to P_0$ be a cp lift of     the identity map on $A = P_0/J$.     Since $P_0$ has the LLP,  $(\phi\circ \psi_0)|_{\delta_1(E)}$ lifts to a cp map $\delta_2: \delta_1(E) \to B$.  Then $\delta_2 \circ \delta_1$ is a cp lift of $\phi|_E$.
\end{proof}

We will see now that the assumptions of Theorem \ref{LLPvsExt} hold for a large class of C*-algebras.

\begin{lemma}\label{LPpassToIdeals} Let $A$ be a separable C*-algebra that has the (L)LP and let $J$ be  an ideal of $A$.
Then $I$ has the (L)LP.
\end{lemma}
\begin{proof} We will show that if every cp map from $A$ to a quotient C*-algebra $B/I$ lifts, then every cp map from $J$ to $B/I$ lifts. It will imply that the LP passes to ideals and, since the LLP is equivalent to lifting cp maps from the Calking algebra, it also will imply that the LLP passes to ideals.

Let $\phi: J \to B/I$ be a cp map. Let $\{i_{\lambda}\}$ be  a quasicentral approximate unit for $J\lhd A$. We define a ccp map $\rho_{\lambda}: A\to J$ by $$\rho_{\lambda}(a) = i_{\lambda}^{1/2}ai_{\lambda}^{1/2}, $$ for $a\in A$. Then
$$\phi\circ\rho_{\lambda}(i) = \phi(i_{\lambda}^{1/2}ii_{\lambda}^{1/2}) \to \phi(i),$$ for any $i\in J$. Since $\phi\circ\rho_{\lambda}$ is liftable by assumption, $\phi$ is a pointwise limit of liftable maps $\phi\circ\rho_{\lambda}|_J$ and therefore is liftable itself by Arveson's theorem.
\end{proof}

\begin{proposition}\label{suspension} For any separable C*-algebra $A$, its suspension $SA$ is an inductive limit, with surjective connecting maps, of separable C*-algebras that are RFD and have the LP.
\end{proposition}
\begin{proof} By \cite{LoringShulman} we can write $CA$ as an inductive limit $CA = \varinjlim B_n$, where all $B_n$ are projective, hence are RFD  and have the LP, and  all the connecting maps $\theta_{n, \infty}: B_n \to CA$ are surjective. Since $SA$ is a C*-subalgebra (in fact, an ideal) of $CA$,
$$SA = \varinjlim \theta_{n, \infty}^{-1}(SA)$$
(\cite[Lemma 13.1.5]{LoringBook}), with the surjective connecting maps $\theta_{n, \infty}|_{\theta_{n, \infty}^{-1}(SA)}$.  Since $ \theta_{n, \infty}^{-1}(SA)$ is an ideal in $B_n$ and, by Lemma \ref{LPpassToIdeals}, the LP property passes to ideals, $ \theta_{n, \infty}^{-1}(SA)$ has the LP. Since the RFD property obviously passes to C*-subalgebras,
$ \theta_{n, \infty}^{-1}(SA)$ is RFD.
\end{proof}

\begin{corollary}\label{examplesLLPExt} Suppose $A = C^*(F_2\times F_2)$ or $A$ is any separable contractible C*-algebra or $A$ is a suspension of any separable C*-algebra. Then $A$ has the LLP if and only if $Ext(A)$ is a group.
\end{corollary}
\begin{proof}
$A = C^*(F_2\times F_2)$ satisfies the assumptions of Theorem \ref{LLPvsExt} by Theorem \ref{FnxFn} and Proposition \ref{OtherPropertiesOfFnxFn}.  Any separable contractible C*-algebra  satisfies the assumptions of Theorem \ref{LLPvsExt} because it is an inductive limit of projective C*-algebras \cite{Thiel} and projectivity implies the LP and RFD. The suspension over any separable C*-algebra satisfies the assumptions of Theorem \ref{LLPvsExt} by Proposition \ref{suspension}.
\end{proof}

\begin{corollary} (Kirchberg \cite{Kirchberg} in unital case) For any separable C*-algebra $A$, $A$ has the LLP if and only if $Ext(SA)$  is a group if and only if $Ext(CA)$ is a group.
\end{corollary}
\begin{proof} We only need to show that $A$ has  the LLP if and only if $SA$ has the LLP if and only if $CA$ has the LLP. The statement will then follow from Corollary \ref{examplesLLPExt}.

Since the LLP passes to tensor products with nuclear C*-algebras (\cite{Kirchberg}), if $A$ has the LLP, so do $SA$ and $CA$.

To prove the other implication, let $f\in C_0(0, 1)$ ($f\in C_0(0, 1]$, respectively) be  a positive function such that $f(1/2)=1$. We define a cp map $\delta: A \to SA$ ($\delta: A \to CA$, respectively) by $$\delta(a) = a\otimes f,$$ for any $a\in A$. Then $$id_A = ev_{1/2}\circ \delta.$$ Therefore  every cp map $\phi: A \to Q(H)$ factorizes through $SA$ ($CA$, respectively). It implies that if $SA$ ($CA$, respectively) has the LLP, so does $A$.
\end{proof}

\medskip

\section{Miscellaneous}

\subsection{Extensions with the WEP}

We embed $\prod M_n$ into $B(H)$ and $\oplus M_n$ into $K(H)$ diagonally.

\begin{lemma}\label{WEPlemma} Let $D$ be any C*-algebra. Then $$(K\otimes D) \bigcap \left(\left(\prod M_n\right)\otimes D\right) = \left(\oplus M_n\right)\otimes D.$$
\end{lemma}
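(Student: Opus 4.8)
The plan is to exploit the block-diagonal conditional expectation together with the functoriality and injectivity of the minimal tensor product; throughout $\otimes = \otimes_{\min}$ and everything is viewed inside the ambient C*-algebra $B(H)\otimes D$, in which both $K\otimes D$ and $\left(\prod M_n\right)\otimes D$ sit isometrically (minimality of $\otimes_{\min}$ preserves the inclusions $K\hookrightarrow B(H)$ and $\prod M_n\hookrightarrow B(H)$). The inclusion $\supseteq$ is immediate, since $\oplus M_n\subseteq K$ and $\oplus M_n\subseteq \prod M_n$, so the content is the reverse inclusion.

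Let $P_n\in B(H)$ be the projection onto the $n$-th block, so that $M_n = P_nB(H)P_n$, and let $E\colon B(H)\to \prod M_n$, $E(T)=\sum_n P_nTP_n$, be the canonical block-diagonal conditional expectation; it is a normal unital completely positive map whose restriction to $\prod M_n$ is the identity. First I would verify that $E$ carries $K$ into $\oplus M_n$: for a compact operator $T$ the projections $P_n$ tend $\ast$-strongly to $0$, so $\|TP_n\|=\|P_nT^*\|\to 0$ by compactness of $T^*$, whence $\|P_nTP_n\|\to 0$ and $E(T)\in\oplus M_n$.

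Next I would tensor with $D$. Since $E$ is ccp, $E\otimes\mathrm{id}_D$ is a well-defined ccp map on $B(H)\otimes D$ by \cite[Th. 3.5.3]{BO}. On elementary tensors it fixes $\left(\prod M_n\right)\otimes D$, so by continuity $(E\otimes\mathrm{id}_D)$ is the identity on $\left(\prod M_n\right)\otimes D$. On the other hand, for $k\in K$ and $d\in D$ one has $(E\otimes\mathrm{id}_D)(k\otimes d)=E(k)\otimes d\in (\oplus M_n)\odot D$; because the minimal tensor product is injective, $(\oplus M_n)\otimes D$ is norm-closed in $B(H)\otimes D$, and hence by continuity $(E\otimes\mathrm{id}_D)$ maps all of $K\otimes D$ into $(\oplus M_n)\otimes D$.

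Finally, for $x\in (K\otimes D)\cap\left(\left(\prod M_n\right)\otimes D\right)$, membership in $\left(\prod M_n\right)\otimes D$ gives $x=(E\otimes\mathrm{id}_D)(x)$, while membership in $K\otimes D$ forces this element into $(\oplus M_n)\otimes D$; thus $x\in (\oplus M_n)\otimes D$, which is exactly $\subseteq$. The only step requiring genuine care is the compactness estimate $\|P_nTP_n\|\to 0$ (together with the closedness of $(\oplus M_n)\otimes D$ in $B(H)\otimes D$), since it is precisely what pushes the image of $K\otimes D$ under the expectation into the small ideal; the rest is formal manipulation with the injectivity and functoriality of $\otimes_{\min}$.
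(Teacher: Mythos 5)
Your argument is correct and is essentially the paper's own proof: both hinge on the block-diagonal conditional expectation $E$ and on $E\otimes\mathrm{id}_D$ being a well-defined contraction on $B(H)\otimes D$ by \cite[Th. 3.5.3]{BO}. You package the computation as ``$E\otimes\mathrm{id}_D$ fixes $\left(\prod M_n\right)\otimes D$ and carries $K\otimes D$ into $\left(\oplus M_n\right)\otimes D$,'' whereas the paper runs the same idea as an $\epsilon$-approximation on elementary tensors (and leaves the verification that $E(K)\subseteq\oplus M_n$, which you spell out, implicit).
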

\begin{proof} Let $a\in (K\otimes D) \bigcap ((\prod M_n)\otimes D)$. Given $\epsilon > 0$, there is $
\sum_{i=1}^m k_i\otimes f_i \in K\otimes D$ and $\sum_{j=1}^n T_j\otimes \hat f_j \in (\prod M_n)\otimes D$ such that
\begin{equation}\label{WEP1} \|a- \sum_{i=1}^m k_i\otimes f_i \|\le \epsilon, \; \|a- \sum_{j=1}^n T_j\otimes \hat f_j\|\le \epsilon.\end{equation} Hence
\begin{equation}\label{WEP2} \|\sum_{i=1}^m k_i\otimes f_i  - \sum_{j=1}^n T_j\otimes \hat f_j\|\le 2\epsilon.\end{equation}
Let $E: B(H) \to \prod M_n$ be the map that sends an operator to its block-diagonal.  Then $E\otimes id: B(H)\otimes D \to (\prod M_n)\otimes D$ is contractive by \cite[Th. 3.5.3]{BO}. Applying $E\otimes id$ to (\ref{WEP2}) we obtain
\begin{equation}\label{WEP3} \| \sum_{i=1}^m diag(k_i) \otimes f_i - \sum_{j=1}^n T_j\otimes \hat f_j\| \le 2\epsilon.\end{equation}
We conclude from (\ref{WEP2}) and (\ref{WEP3}) that
$$\|\sum_{i=1}^n diag(k_i)\otimes f_i - \sum_{i=1}^m T_j\otimes \hat f_j\|\le 2\epsilon.$$ Since $diag(k_i) \in \oplus M_n$, it follows from (\ref{WEP1}) that $a$ is approximated by elements of $(\oplus M_n)\otimes D$.
\end{proof}

The following theorem generalizes Kirchberg's result about cones \cite[Th. 13.4.1]{BO}.

\begin{theorem} Let $A$ be  separable contractible (or, more generally, an inductive limit, with surjective connecting maps,  of RFD  C*-algebras with the LP) with the QWEP. Then there exists a quasidiagonal
    extension
    $$0\to K \to B \to A \to 0$$  such that $B$ has the WEP.
\end{theorem}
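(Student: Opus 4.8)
The plan is to construct the extension $B$ as the pullback of the embedding $\gamma : A \to \prod M_{k_n}/\oplus M_{k_n}$ produced in the proof of Theorem \ref{LLPvsExt}. Recall from Lemma \ref{ExtLemma2} that for $A = \varinjlim P_n$ with $P_n$ separable RFD with the LP, there are finite-dimensional representations $\sigma_n : P_n \to M_{k_n}$ giving an isometric embedding $\gamma : A \cong P_0/J \to \prod M_{k_n}/\oplus M_{k_n}$. Setting $q : \prod M_n \to \prod M_n / \oplus M_n$ for the quotient map and $B := q^{-1}(\gamma(A))$, one immediately gets a short exact sequence
$$0 \to \oplus M_n \to B \to A \to 0.$$
Embedding $\prod M_n$ diagonally into $B(H)$ with $\oplus M_n \subset K(H)$, this is a quasidiagonal extension (the block-diagonal structure witnesses the quasidiagonality, as the finite-rank projections $P_{k_1+\dots+k_n}$ commute with $B$ modulo the compacts). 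So the only serious content is the WEP of $B$.

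To prove $B$ has the WEP, I would use the characterization (via Effros--Haagerup / Kirchberg) that $B$ has the WEP if and only if the inclusion $B \otimes_{\min} D \to B \otimes_{\max} D$ is isometric for all $D$ — or, in the form most convenient here, that one can build the u.c.p. weak expectation by exploiting the QWEP hypothesis on $A$ together with the fact that $\oplus M_n$ (being an ideal that is a $c_0$-direct sum of matrix algebras) is nuclear, hence has the WEP, and that the WEP passes through extensions with nuclear ideal when the quotient is controlled. The key structural input is already isolated in the excerpt: the proof of Theorem \ref{LLPvsExt} shows that $\gamma$ is liftable (using that $Ext$-type arguments give a c.p. lift composed with the canonical expectation $E : B(H) \to \prod M_n$), which forces $(P_0 \otimes D)/(J\otimes D) \cong A \otimes_{\min} D$ canonically for every $D$, via the diagram of Lemma \ref{ExtLemma2} together with the identification $(B \otimes D)/(\oplus M_n \otimes D) \cong A \otimes D$ supplied by Effros--Haagerup once local liftability of $\gamma$ is known.

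The heart of the argument is therefore to upgrade this to a genuine WEP statement for $B$ itself. First I would record that since $A$ is QWEP, we may fix a WEP algebra $W$ and a surjection $W \to A$. The plan is to show $B$ is QWEP and in fact WEP by checking that $B \otimes_{\min} D = B \otimes_{\max} D$ for all $D$. Lemma \ref{WEPlemma} is designed exactly for this: it identifies $(K \otimes D) \cap ((\prod M_n)\otimes D) = (\oplus M_n)\otimes D$, which lets one compare the minimal and maximal tensor norms on $B \otimes D$ by reducing, modulo the nuclear ideal $\oplus M_n \otimes D$, to the quotient $A \otimes D$. Because $A$ is QWEP and $\gamma$ lifts completely positively (so that $A \otimes_{\min} D$ sits isometrically inside $B \otimes_{\min} D / (\oplus M_n \otimes D)$ in the way forced by the equalities of the previous theorem), the $\min = \max$ comparison propagates from the ideal and the quotient to the middle algebra $B$ by a standard extension-of-WEP argument — this is precisely where I would invoke the generalization of Kirchberg's theorem on extensions with the WEP mentioned in the introduction.

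The main obstacle I anticipate is exactly this last step: transferring the tensorial ($\min = \max$) information from the ideal $\oplus M_n$ and the quotient $A$ to the extension $B$. This is not automatic — the WEP is not in general closed under extensions — and what makes it work is the special nuclearity of $\oplus M_n$ together with the completely positive splitting of $\gamma$ guaranteed by the hypothesis ($Ext$ being a group in the proof of Theorem \ref{LLPvsExt}, or here the QWEP of $A$ providing the required liftability). I would need to verify carefully that the weak expectation $\Phi : B(H_u) \to B^{**}$ can be assembled from the weak expectation for the nuclear ideal and a u.c.p. map witnessing QWEP of $A$, with Lemma \ref{WEPlemma} ensuring the two pieces glue compatibly along $\oplus M_n \otimes D$. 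The quasidiagonality of the extension, by contrast, is routine and follows directly from the diagonal block structure of $\prod M_n$.
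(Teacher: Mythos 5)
Your overall architecture (use Lemma \ref{ExtLemma2} to embed $A$ into $\prod M_{k_n}/\oplus M_{k_n}$, pull back to get the extension, observe quasidiagonality from the block structure, then fight for the WEP via a min/max tensor comparison using Lemma \ref{WEPlemma}) is the right skeleton, and it matches the paper's strategy. But there are three concrete gaps. First, the extension you write down is $0\to \oplus M_{k_n}\to B\to A\to 0$, whereas the statement requires the ideal to be $K$; since $\oplus M_{k_n}\not\cong K$, you must enlarge the algebra, as the paper does by setting $B=C^*(\Psi(F),K)\subseteq B(H)$ where $\Psi=(\prod\sigma_n)\circ\psi$, and then use Lemma \ref{WEPlemma} (with $D=\mathbb C$) together with the isometry of Lemma \ref{ExtLemma2} to check that $B/K\cong A$. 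Second, your stated criterion for the WEP --- that $B\otimes_{\min}D\to B\otimes_{\max}D$ be isometric for \emph{all} $D$ --- is not the WEP but nuclearity; the correct criterion (\cite[Cor.\ 13.2.5]{BO}) tests only $D=C^*(F_\infty)$, and the entire proof is organized around this single test algebra $F=C^*(F_\infty)$: one also takes $P_0=F$, so that the QWEP of $A=F/J$ enters exactly once, via \cite[ex.\ 13.3.5]{BO}, to give the canonical identification $A\otimes_{\max}F=(F\otimes F)/(J\otimes F)$ (min tensor products on the right).

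Third, and most seriously, the step you yourself flag as the heart of the argument is left to ``a standard extension-of-WEP argument'' and to ``the generalization of Kirchberg's theorem on extensions with the WEP mentioned in the introduction'' --- but that generalization \emph{is} the theorem being proved, so this is circular. You also misattribute the role of QWEP: it does not provide a c.p.\ lift of $\gamma$ (that liftability came from the $Ext(A)$-is-a-group hypothesis in Theorem \ref{LLPvsExt}, which is absent here), and no lift of $\gamma$ is used in the actual proof. What replaces your missing step is: (a) the Claim that $(B\otimes F)/(K\otimes F)\cong A\otimes_{\max}F$ canonically, obtained by chaining the QWEP identification above with the isometry of Lemma \ref{ExtLemma2} applied to $D=F$ and the intersection formula of Lemma \ref{WEPlemma}, plus surjectivity because $B$ is generated by $\Psi(F)$ and $K$; and then (b) a five-lemma argument comparing the exact rows $0\to K\otimes_{\max}F\to B\otimes_{\max}F\to A\otimes_{\max}F\to 0$ and $0\to K\otimes F\to B\otimes F\to A\otimes_{\max}F\to 0$, using nuclearity of $K$, to conclude $B\otimes_{\max}F=B\otimes F$ and hence the WEP. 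Without (a) and (b) spelled out, the proposal does not constitute a proof.
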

\begin{proof}
    Let $F$ be the full group C*-algebra of the free group with countably many generators. Replacing $A$ by its unitization if necessary,       we can write $A = \varinjlim P_n$ with $P_n$ being RFD and having the LP and $P_0 = F$.
We write $A = F/J$ and let $\psi$ and $\sigma_n, n\in \mathbb N, $ be as in Lemma \ref{ExtLemma2}. Let $\Psi = \left(\prod \sigma_n\right) \circ \psi: F \to \prod M_n$.
Let $$B = C^*(\Psi(F), K).$$ Then $B/K = A$.

\medskip

{\it Claim:} $\left(B\otimes F\right) /\left( K \otimes F\right)$ is canonically isomorphic to $A\otimes_{max} F$.

\medskip

{\it Proof of Claim:}  By \cite[Prop. 3.6.1]{BO} $(\oplus M_n)\otimes F \subset K\otimes F$, $(\oplus M_n)\otimes F \subset B(H)\otimes F$. Together with  Lemma \ref{WEPlemma} this implies $$\left(\left(\prod M_n\right)\otimes F \right)/ \left((\oplus M_n)\otimes F\right) \subset \left(B(H)\otimes F\right)/ \left(K\otimes F\right).$$

Since $A$ is QWEP, by \cite[exercise 13.3.5]{BO} (applied to $ C_1=F, C_2=F, J_1=J, J_2=0$)
$$A\otimes_{max}F = \left( F\otimes F\right) /\left(J\otimes F\right) $$ canonically.  Then using Lemma \ref{ExtLemma2}  we obtain
$$A\otimes_{max} F = \left(F\otimes F\right)  /\left(J\otimes F \right)  \cong \theta(\left(F\otimes F\right)  /\left(J\otimes F\right) ) \subseteq \left(B\otimes F\right)  / \left( K \otimes F\right) .$$ By the construction of all the maps here we have
$$\sum_{i=1}^N f_i\otimes \tilde f_i + J\otimes F \mapsto \sum_{i=1}^N \Psi(f_i)\otimes \tilde f_i + K\otimes F \in \left(B\otimes F\right)/\left(K\otimes F\right)$$ and since $B$ is generated by $\Psi(F)$ and $K$, the range is the whole $\left(B\otimes F\right)/\left(K\otimes F\right)$.
Claim is proved.

\noindent Now we have a commuting diagram

  $$\begin{tikzcd}
     0\arrow{r} & K\otimes_{max} F \arrow{r}\arrow{d} & B\otimes_{max} F \arrow{r} \arrow{d}& A\otimes_{max} F \arrow{r}  \arrow{d} & 0      \\  0\arrow{r}  &K\otimes F \arrow{r} &B\otimes F \arrow{r} &A\otimes_{max} F \arrow{r} & 0
  \end{tikzcd}$$

\medskip

\noindent with both rows being exact. Since $K$ is nuclear, by 5 Lemma  we obtain $B\otimes_{max} F = B\otimes F$.
By \cite[Cor. 13.2.5]{BO} this is equivalent to $B$ having the WEP.
\end{proof}

\medskip
\subsection{Group C*-algebras}

Here we give a new proof to the fact that the class of C*-algebras with the LP is closed under crossed products with amenable groups which was proved by Buss, Echterhoff and Willett \cite[Th. 7.4]{BEW}.

\begin{theorem}(Buss, Echterhoff, Willett) Suppose $G$ is an amenable group  and $A$ is a separable C*-algebra with the LP. Then $A\rtimes_r G$ has the LP.
\end{theorem}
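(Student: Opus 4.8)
The plan is to use our characterization of the LP in terms of lifting $\ast$-homomorphisms (Corollary \ref{CharacterizationLP}), together with Fell's absorption principle and a F\o lner averaging argument. Since $G$ is amenable we have $A\rtimes_r G = A\rtimes G$, so a $\ast$-homomorphism $f\colon A\rtimes_r G\to B/I$ is given by a covariant pair $(\pi,u)$, where $\pi=f|_A\colon A\to B/I$ and $u\colon G\to\mathcal U(M(B/I))$ satisfies $u_g\pi(a)u_g^*=\pi(\alpha_g(a))$ (we may assume $G$ countable, otherwise $A\rtimes_r G$ is non-separable). By Corollary \ref{CharacterizationLP} it suffices to produce a $\ast$-homomorphism $\tilde g\colon A\rtimes_r G\to M(B/I\otimes K)$ such that $f\oplus\tilde g$ lifts to a ccp map into $M(B\otimes K)$.

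First I would lift the coefficient algebra. Since $A$ has the LP, Corollary \ref{CharacterizationLP} gives a $\ast$-homomorphism $g_A\colon A\to M(B/I\otimes K)$ such that $\bar\pi:=\pi\oplus g_A$ lifts to a $\ast$-homomorphism $\Phi\colon A\to M(B\otimes K)$. Identifying $K(\ell^2G)$ with $K$, I then form the regularized covariant representation $\Pi_{\mathrm{reg}}(a)=\bigoplus_{h\in G}\bar\pi(\alpha_h^{-1}(a))$ together with the canonical unitaries $1\otimes\lambda_g$. A direct computation using only that $\bar\pi$ is a $\ast$-homomorphism and $\alpha$ is an action shows $(1\otimes\lambda_g)\Pi_{\mathrm{reg}}(a)(1\otimes\lambda_g)^*=\Pi_{\mathrm{reg}}(\alpha_g(a))$, so $(\Pi_{\mathrm{reg}},1\otimes\lambda)$ integrates to a $\ast$-homomorphism $\bar f_{\mathrm{reg}}\colon A\rtimes_r G\to M(B/I\otimes K)$. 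Crucially $\bar f_{\mathrm{reg}}$ lifts: replacing $\bar\pi$ by its lift $\Phi$ gives a covariant pair over $B$ with the very same regular unitaries $1\otimes\lambda_g$, which integrates to a $\ast$-homomorphism $F_{\mathrm{reg}}$ lifting $\bar f_{\mathrm{reg}}$ (using the Noncommutative Tietze Extension Theorem to pass the quotient to the multiplier level).

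The main obstacle is that $f$ itself is not a direct summand of the liftable map $\bar f_{\mathrm{reg}}$: the regular unitaries $1\otimes\lambda$ have replaced the given representation $u$, and one must recover $f$. This is exactly where Fell's absorption principle and amenability enter. Writing $\bar f_{\mathrm{reg}}=f^{\pi}_{\mathrm{reg}}\oplus f^{g_A}_{\mathrm{reg}}$ according to $\bar\pi=\pi\oplus g_A$, Fell's unitary $W=\sum_{h}u_h^*\otimes e_{hh}\in\mathcal U(M(B/I\otimes K))$ conjugates $f^{\pi}_{\mathrm{reg}}$ onto the integrated form $f'$ of $(\pi\otimes 1,\,u\otimes\lambda)$, that is, onto ``$f$ tensored with the regular representation''. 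Since the unitary group of $M(B/I\otimes K)$ is connected \cite{Mingo}, $W\oplus 1$ lifts to a unitary in $M(B\otimes K)$; conjugating $F_{\mathrm{reg}}$ by this lift shows that $f'\oplus f^{g_A}_{\mathrm{reg}}$ lifts to a ccp map. Finally, amenability supplies F\o lner unit vectors $\xi_n\in\ell^2G$ with $\langle\lambda_g\xi_n,\xi_n\rangle\to 1$ for every $g$, and compressing the $f'$-summand by $1\otimes P_{\xi_n}$ sends $\pi(a_g)u_g\otimes P_{\xi_n}\lambda_g P_{\xi_n}$ to $\pi(a_g)u_g=f(a_gu_g)$, so the compressions of $f'\oplus f^{g_A}_{\mathrm{reg}}$ converge in the point-norm topology to $f\oplus f^{g_A}_{\mathrm{reg}}$. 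Each compression is liftable (compress the given lift by the scalar projection $(1\otimes P_{\xi_n})\oplus 1$, which lifts to itself), so by Arveson's theorem \cite{Arveson} that point-norm limits of liftable ccp maps are liftable, $f\oplus f^{g_A}_{\mathrm{reg}}$ lifts to a ccp map. Taking $\tilde g:=f^{g_A}_{\mathrm{reg}}$ and invoking Corollary \ref{CharacterizationLP} then yields that $A\rtimes_r G$ has the LP.
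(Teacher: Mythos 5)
Your argument is correct in substance, but it takes a genuinely different route from the paper's. The paper's proof is a three-line black-box argument: by the proof of \cite[Th.\ 4.2.6(2)]{BO} (the standard F\o lner-based factorization behind nuclearity of crossed products by amenable groups) the identity of $A\rtimes_r G$ is a point-norm limit of compositions $\psi_n\circ\phi_n$ of ccp maps factoring through $M_{k_n}\otimes A$; since $M_{k_n}\otimes A$ has the LP, any ccp map $\delta\colon A\rtimes_r G\to B/I$ is the point-norm limit of the liftable maps $\delta\circ\psi_n\circ\phi_n$, hence liftable by Arveson. You instead stay inside the homomorphism-lifting framework of Corollary~\ref{CharacterizationLP} and in effect re-derive that approximation at the level of representations: Fell absorption trades the given unitaries $u$ for the liftable regular ones, and F\o lner compression recovers $f$ from $f\otimes\lambda$. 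Both proofs use amenability through F\o lner sets and both close with Arveson's theorem; yours is longer but self-contained about the crossed-product structure and illustrates the new characterization, while the paper's is shorter and exhibits the more general principle that the LP passes along point-norm ccp factorizations through algebras with the LP. Three points to tighten in your write-up: (a) the compressions by $1\otimes P_{\xi_n}$ land in corners that move with $n$, so to get honest point-norm convergence to $f\oplus f^{g_A}_{\mathrm{reg}}$ you should instead conjugate by fixed scalar rank-one partial isometries carrying $\mathbb C\xi_n$ onto a fixed one-dimensional subspace (these are still scalar multipliers lifting to themselves, so liftability is preserved); (b) you should first reduce to the unital case, e.g.\ via \cite[Lemma 13.1.2]{BO}, so that the unitaries $u_g=f(\delta_g)$ genuinely lie in $B/I$ and the Fell unitary $W$ is a multiplier of $B/I\otimes K$ --- for a degenerate $f$ the $u_g$ live only in the multiplier algebra of $C^*(f(A\rtimes_r G))$, which need not sit inside $M(B/I)$; (c) your argument covers countable $G$ only, which matches the separability hypothesis of Corollary~\ref{CharacterizationLP} but is slightly narrower than the paper's argument, which needs no countability assumption on the (discrete) group.
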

\begin{proof} By the proof of \cite[Th. 4.2.6(2)]{BO}  there exist cpc maps $\phi_n: A\rtimes_r G \to M_{k_n}\otimes A$  and $\psi_n:  M_{k_n} \otimes A \to A\rtimes_r G$   such that $id_{A\rtimes_r G}$ is the pointwise limit of $\psi_n\circ\phi_n$.

Let $\delta: A\rtimes_r G \to B/I$ be a ccp map. Since $M_n\otimes A$ has the LP, $\delta\circ \psi_n$ is liftable, and then $\delta\circ\psi_n\circ \phi_n$ is liftable, for any $n$. Then $\delta$ is a pointwise limit of liftable ccp maps and by Arveson's theorem it is itself liftable.
\end{proof}

\begin{corollary}(Buss, Echterhoff, Willett) Suppose $G$ is an amenable group  and $A$ is a separable C*-algebra with the LP. Then $A\rtimes G$ has the LP.
\end{corollary}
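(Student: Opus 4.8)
The plan is to deduce this immediately from the preceding theorem by passing from the reduced to the full crossed product. The one external fact I would use is the standard characterization of amenability at the level of crossed products: when $G$ is amenable, the canonical surjection $A\rtimes G \to A\rtimes_r G$ is an isomorphism (this is part of the circle of equivalences surrounding \cite[Th. 4.2.6]{BO}, which is already invoked in the proof of the theorem just above).

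First I would record that the LP is manifestly an isomorphism invariant, being defined purely in terms of liftability of ccp maps out of the algebra into quotient C*-algebras; thus any property established for a C*-algebra transfers verbatim to any isomorphic copy of it. Then, since $G$ is amenable, I would identify $A\rtimes G$ with $A\rtimes_r G$ through the canonical isomorphism. The theorem just established gives that $A\rtimes_r G$ has the LP, and transporting this along the isomorphism yields the LP for $A\rtimes G$.

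I do not expect any genuine obstacle here: the entire substance of the statement lies in the theorem for reduced crossed products, and amenability collapses the distinction between the full and reduced constructions. The only point requiring a modicum of care is to cite the correct form of the full-equals-reduced identification for amenable groups, which is classical and already implicit in the cited \cite[Th. 4.2.6]{BO}; everything else is formal.
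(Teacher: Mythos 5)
Your proposal is correct and coincides with the paper's own proof: the paper likewise invokes \cite[Th. 4.2.6(1)]{BO} to identify $A\rtimes G$ with $A\rtimes_r G$ for amenable $G$ and then applies the preceding theorem. Nothing further is needed.
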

\begin{proof} Since $G$ is amenable, by \cite[Th. 4.2.6(1)]{BO} $A\rtimes_r G = A\rtimes G$, and the statement follows from the theorem above.
\end{proof}

In particular the following are interesting examples of full group C*-algebras with the LP.

\begin{corollary} $C^*(F_n\rtimes G)$ has the LP, for any amenable group $G$. \end{corollary}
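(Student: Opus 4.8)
The plan is to derive the corollary directly from the preceding theorem by exhibiting $C^*(F_n \rtimes G)$ as a crossed product of a group C*-algebra with the LP by the amenable group $G$. First I would recall the standard fact that for a semidirect product of groups $F_n \rtimes G$, the full group C*-algebra factors as a crossed product: there is a canonical isomorphism
\[
C^*(F_n \rtimes G) \cong C^*(F_n) \rtimes G,
\]
where $G$ acts on $C^*(F_n)$ by the $\ast$-automorphisms induced by the conjugation action of $G$ on $F_n$. This is a routine consequence of the universal properties of full crossed products and full group C*-algebras: a covariant representation of $(C^*(F_n), G)$ is the same data as a unitary representation of $F_n \rtimes G$, since the generating unitaries of $F_n$ together with those of $G$ satisfy exactly the commutation relation encoding the semidirect product.

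Next I would verify that the two hypotheses of the theorem are met. The group $G$ is amenable by assumption, so the reduced and full crossed products coincide and the theorem applies. For the coefficient algebra, I would invoke the fact (established earlier, e.g. via Corollary~\ref{Boca} together with the known LP of $C^*(F_n)$, or directly from the characterization in Corollary~\ref{CharacterizationLP}) that $C^*(F_n)$ is a separable C*-algebra with the LP. With $A = C^*(F_n)$ having the LP and $G$ amenable, the theorem immediately yields that $A \rtimes_r G = A \rtimes G$ has the LP.

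Combining these two steps gives that $C^*(F_n) \rtimes G \cong C^*(F_n \rtimes G)$ has the LP, which is the claim. The main obstacle, such as it is, is purely bookkeeping: one must make sure the semidirect-product isomorphism is set up correctly at the level of universal C*-algebras, and in particular that the action of $G$ on $C^*(F_n)$ is the automorphic action coming from conjugation in $F_n \rtimes G$ rather than some twisted variant. Once that identification is in place, the result is an immediate application of the theorem, so there is no genuine analytic difficulty — the LP is inherited entirely through the crossed-product machinery and the amenability of $G$.
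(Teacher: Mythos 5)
Your proposal is correct and is exactly the argument the paper intends: the corollary is stated right after the full crossed product result, and follows by identifying $C^*(F_n\rtimes G)$ with $C^*(F_n)\rtimes G$ and using that $C^*(F_n)$ is separable with the LP. The paper leaves this deduction implicit, so your write-up simply supplies the routine details.
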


\bigskip

\textbf{ Conflict of interest statement.}
 On behalf of all authors, the corresponding author states that there is no conflict of interest.

\end{document}